\providecommand{\U}[1]{\protect\rule{.1in}{.1in}}
\numberwithin{equation}{section}
\numberwithin{equation}{section}
\numberwithin{equation}{section}
\def\e{\eps}
\newtheorem{definition}{Definition}[section]
\newtheorem{theorem}[definition]{Theorem}
\newtheorem{lemma}[definition]{Lemma}
\newtheorem{proposition}[definition]{Proposition}
\theoremstyle{definition} {\newtheorem{remark}[definition]{Remark}}
\begin{document}

\title{Optimal Design of Fractured Media with Prescribed Macroscopic Strain}
\author{Jos\'e Matias}
\address{CAMGSD, Departamento de Matem\'atica, Instituto Superior T\'ecnico, Av.\@ Rovisco Pais, 1, 1049-001 Lisboa, Portugal}
\email[J.~Matias]{jose.c.matias@tecnico.ulisboa.pt}
\author{Marco Morandotti}
\address{SISSA -- International School for Advanced Studies, Via Bonomea, 265, 34136 Trieste, Italy. \emph{Tel:} +39 0403787510}
\email[M.~Morandotti]{marco.morandotti@sissa.it}
\author{Elvira Zappale}
\address{Dipartimento di Ingegneria Industriale, Universit\`{a} degli Studi di Salerno, Via Giovanni Paolo II, 132, 84084 Fisciano (SA), Italy}
\email[E.~Zappale \myenv]{ezappale@unisa.it}

\date{\today. Preprint SISSA 42/2016/MATE}

\maketitle

\begin{abstract}
In this work we consider an optimal design problem for two-component fractured media for which a macroscopic strain is prescribed.
Within the framework of structured deformations, we derive an integral representation for the relaxed energy functional.
We start from an energy functional accounting for bulk and surface contributions coming from both constituents of the material; the relaxed energy densities, obtained via a blow-up method, are determined by a delicate interplay between the optimization of sharp interfaces and the diffusion of microcracks.
%result for a functional that couples the frameworks of optimal design and structured deformation problems. 
This model has the far-reaching perspective to incorporate elements of plasticity in optimal design of composite media.
\end{abstract}

\bigskip

%{\small
\keywords{\noindent {\bf Keywords:} {Structured deformations, optimal design, relaxation, disarrangements, interfacial energy density, bulk energy density.}
%}

\bigskip
\subjclass{\noindent {\bf {2010} 
Mathematics Subject Classification:}
{49J45  	%Variational methods including variational inequalities
(74A60, %Micromechanical theories 								- for SD
49K10,  %Free problems in two or more independent variables 			- for OD
%74A45,   %	Theories of fracture and damage 					
74A50). %,   %	Structured surfaces and interfaces, coexistent phases 	- for the title (kind of)
%26B30) % BV functions										- these are part of the theory, no big deal to stress.
}}
}\bigskip

\section{Introduction}

Starting with the pioneering papers by Kohn and Strang \cite{KS1,KS2,KS3}, much attention has been drawn to optimal design problems for mixtures of conductive materials.
The variational formulation of these problems, particularly useful for finding configurations of minimal energy, entails some technical problems from the mathematical point of view, in particular the non-existence of solutions. 
In \cite{AB, KL} this issue is addressed by introducing  a perimeter penalization in the energy functional to be minimized, which has also the effect of discarding configurations where the two materials are finely mixed. For a related problem, leading to a similar energy functional in the context of brutal damage evolution, see \cite{ABe} and \cite{FF}.

%\blu{We refer also the related problem of brutal damage evolution ( see \cite{ABe} and \cite{FF}).}

In the spirit of \cite{KS4,PS} we want to study an optimal design problem which can incorporate elements of plasticity, in a way that it is suited to treat both composite materials (made of components with different mechanical properties) and polycrystals (where the same material develops different types of slips and separations at the microscopic level).
In order to do so, we extend the framework introduced in \cite{AB}, by considering a material with two components %/bicrystal 
each of which %/grain 
undergoes an independent (first-order) \emph{structured deformation}, according to the theory developed by Del Piero and Owen \cite{DO}. The generalization of our model to account for materials with more than two components, or to polycrystals, is straightforward.

Structured deformations set the basis to address a large variety of problems in continuum mechanics where geometrical changes can be associated with both  classical and non-classical deformations for which an analysis at macroscopic and microscopic level is required.
For instance, in a solid with a crystalline defective structure, separation of cracks at the macroscopic level may compete with slips and lattice distortions at the microscopic level preventing the use of classical theories, where deformations are assumed to be smooth.
The objective of the theory of structured deformations is to generalize the theoretical apparatus of continuum mechanics as a starting point for a unified description of 
bodies with microstructure. 
It also turns out to be relevant to describe phenomena as plasticity, damage, creation of voids, mixing, and fracture in terms of the underlying microstructure (see \cite{DO}).%, with energy densities independent from one another; 

\smallskip

We discuss now in more detail the application to polycrystals, which consist of a large number of grains, each having a different crystallograpic orientation, and where the intrinsic elastic and plastic response of each portion may vary from point to point. 
The anisotropic nature of crystal slip usually entails reorientation and subdivision phenomena during plastic straining of crystalline matter, even under homogeneous and gradient-free external loadings. 
This leads to spatial heterogeneity in terms of strain, stress, and crystal orientation. 
Beyond the aim of gaining fundamental insight into polycrystal plasticity, an improved understanding of grain-scale heterogeneity is important, and this is the main motivation for our work.
%We refer to \cite{R2} for a complete list of reasons why such understanding is important, and we highlight here some.
As noted in \cite{R2}, structural and functional devices are increasingly miniaturized. 
This involves size reduction down to the single crystal or crystal-cluster scale. 
In such parts, crystallinity becomes the dominant origin of desired or undesired anisotropy. 
In miniaturized devices plastic heterogeneity and strain localization can be sources of quality loss and failure. 
Thus, optimized design of small crystalline parts requires improved insight into crystal response and kinematics at the grain and subgrain scale under elastic, plastic, or thermal loadings.
Moreover, the better understanding of the interaction between neighbouring grains, namely the quantification of its elastoplastic interaction, is in itself relevant for the verification and improvement of existing polycrystals homogenization models. 
These models are often considered to capture the heterogeneities on material response for a polycrystals, see, e.g., \cite{R}.
In this spirit, this work can also be viewed as a first step towards the derivation of a homogenization result for a polycrystalline material in the context of plasticity.

\smallskip

To minimize our functional from a variational point of view, we rely on the energetics for structured deformations first studied by Choksi and Fonseca \cite{CF}, where the problem is set in the space of special functions of bounded variation. 
Given an open bounded subset $\Omega\subset\R N$, a structured deformation (in the context of \cite{CF}) is a pair $(g, G)\in SBV(\Omega; \R d){\times}L^1(\Omega; \R {d\times N})$, where $g$ is the microscopic deformation and $G$ is the macroscopic deformation gradient.
The energy associated with a structured deformation is then defined as the most effective way to build up the deformation using sequences $u_n \in SBV(\Omega; \R d)$ that approach $(g, G)$ in the following sense: $u_n \to g $ in $L^1(\Omega; \R d)$ and $\nabla u_n \wto G$ in $L^p(\Omega; \R {d\times N})$, for $p>1$ a given summability exponent. 
The convergences above imply that the singular parts $D^s u_n$ converge, in the sense of distributions, to $Dg-G$. %\note{maybe put the example when $g$ is smooth} %, originating a singular \note{is this true?} measure that diffuses in the bulk.
To have a better understanding of this phenomenon, consider the simpler case of a deformation $g \in W^{1,1}(\Omega; \R d)$, that is,  without macroscopic cracks.
 Then, $Du_n = \nabla u_n \mathcal{L}^N + D^su_n$ with $D^s u_n$ absolutely continuous with respect to the Hausdorff measure $\cH^{N-1}$ and supported in $S(u_n)$, the jump set of $u_n$. Since $D u_n \wto \nabla g$ in the sense of distributions and $\nabla u_n \wto G$, we conclude that $D^s u_n \wto \nabla g - G$ in the sense of distributions. 

 This tells us that the difference between microscopic and macroscopic deformations is achieved through a limit of singular measures supported in sets $S(u_n)$ such that $\cH^{N-1}(S(u_n)) \to +\infty.$ The tensor $M:=\nabla g-G$ is called the \emph{disarrangements tensor} and embodies the fact that the difference between the microscopic and the macroscopic deformations in the bulk are achieved as a limit of singular measures.

 The results obtained in \cite{CF} show that the bulk density of the energy of a structured deformation can be influenced by both the bulk and interfacial densities of the energy of these approximating sequences, and the interplay is characterized by means of precise relations between them. 

\smallskip

The energy functional that we consider (see \eqref{energyodsd}) will feature (i) different bulk densities associated with each of the two components, %/grains, 
(ii) surface energy densities to account for the jumps in the deformations inside each component, %/grain, 
(iii) a perimeter penalization (which measures the boundary between the two components %/grains 
independently on the discontinuities on the deformation), and finally (iv) a surface energy term that accounts for the interaction between neighbouring components %/grains 
(where both discontinuities in the deformation and %grains 
in the components are counted). 
%In the rest of this introduction, for the sake of simplicity we will just use the term ``components".

More precisely, in order to take the presence of two components into account, we consider a set of finite perimeter $E\subset\Omega$, describing one of them, and let $\chi\in BV(\Omega;\{0,1\})$ be its characteristic function. 
Denoting by $\{\chi= 1\}$ the set of points in $\Omega$ with density $1$ (see \cite{AFP}), by  $\{\chi = 0\}$ the set of points in $\Omega$ with density $0$, and letting $u\in SBV(\Omega;\R d)$, we consider the following energy $F_{\odsd}\colon BV(\Omega;\{0,1\}){\times}SBV(\Omega;\R d)\to [0, +\infty[$, defined as
\begin{equation}\label{energyodsd}
\begin{split}
F_{\odsd}(\chi,u):= & \int_\Omega ((1-\chi)W^0(\nabla u)+\chi W^1(\nabla u) )\,\de x \\
+ &  \int_{\Omega\cap\{ \chi = 0\}\cap S(u)}g^0_1([u],\nu(u))\,\de\cH^{N-1}+ \int_{\Omega\cap \{ \chi = 1\}\cap S(u)} g^1_1([u],\nu(u))\,\de\cH^{N-1} \\
+ &  \int_{\Omega \cap S(\chi)\cap S(u)} g_2(\chi^+,\chi^-,u^+,u^-,\nu(u))\,\de\cH^{N-1} + |D\chi|(\Omega),
\end{split}
\end{equation}
where $\nu(u)\in\cS^{N-1}$ denotes the normal of the function $u$ to the jump set $S(u)$ of $u$, $\cS^{N-1}$ being the unit sphere in $\R N$.
For $i=0,1$, $W^i\colon\R{d{\times}N}\to\R{}$ are the bulk energy densities associated with the two components, $g_1^i\colon\R d{\times}\cS^{N-1}\to[0,+\infty[$ are the surface energy densities associated with jumps in the deformation in the two components, and $g_2\colon\{0,1\}^2{\times}(\R d)^2{\times}\cS^{N-1}\to\R{}$ is the surface energy density associated with the jumps in the deformation at the interface between the two components. 

The energy contribution of the interface, independently of the discontinuities of the deformation, is carried by $|D\chi|(\Omega)$, the total variation of $D\chi$ in $\Omega$.
%We have denoted by $\cS^{N-1}$ the unit sphere in $\R N$ and by $\nu(u)$ the normal of the function $u$ to the jump set $S(u)$ of $u$.

In \eqref{energyodsd} we have split the jump set $S(\chi,u)$ of the pair $(\chi,u)$ into the disjoint union $S(\chi,u) = (S(\chi)\cap S(u)) \cup (S(u)\setminus S(\chi)) \cup (S(\chi)\setminus S(u))$.
In this way, we penalize the underlying structured deformation occurring in $\{\chi = 0\} \cap (S(u)\setminus S(\chi))$ and $\{\chi =1\}\cap (S(u)\setminus S(\chi))$ through $g_1^0$ and $g_1^1$, respectively, and we penalize the interface $S(\chi)$ through $1$ in $S(\chi)\setminus S(u)$ (via the perimeter term) and  through $1 + g_2$ in $S(\chi)\cap S(u)$.
Therefore, when $\chi$ jumps, we are accounting for the perimeter of $\partial E$ plus a contribution along $S(\chi)$ depending on the discontinuities of $u$.

Our main goal is to find an integral representation for the functional $\cF_{\odsd}\colon BV(\Omega;\{0,1\}){\times}SBV(\Omega;\R d){\times}\allowbreak L^1(\Omega;\R{d{\times}N})\to[0,+\infty[$ defined by
%\begin{equation}\label{304}
%\begin{split}
%\cF_{\text{od-sd}}(\chi,u,G):= & \inf_{(u_n, \chi_n)}\Bigg\{\liminf_n \int_\Omega \left(\chi_nW_1(\nabla u_n) + (1-\chi_n)W_0(\nabla u_n)\right)\,\de x \\
%&+ \int_{S(u_n)\cap \Omega\cap \{ \chi_n = 1\}} g^1_1([u_n],\nu(u_n))\,\de\cH^{N-1} + \int_{S(u_n)\cap \Omega \cap \{ \chi_n = 0\}}g^0_1([u_n],\nu(u_n))\,\de\cH^{N-1}\\
%& + |D\chi_n|(\Omega) + \int_{ \Omega \cap S(u_n)\cap S(\chi_n)} g_2(u_n^+,u_n^-,\nu(u_n))\,\de\cH^{N-1}\\
%&  (u_n)\subset SBV(\Omega;\R{d}), (\chi_n)\subset SBV(\Omega; \{0,1\}), u_n\to u \text{ in }\, L^1, \nabla u_n\wto G \, \text{in }\, L^p \\
%&\chi_n\wsto \chi \; \text{in}\; BV(\Omega; \{0,1\})\Bigg\}.% \; \; A \in \mathcal{A}(\Omega)
%\end{split}
%\end{equation}
%\note{to here --- we could write}
\begin{equation}\label{304bis}
\begin{split}
\cF_{\odsd}(\chi,u,G):=  \inf%_{(\chi_n,u_n)}
\Big\{ & \liminf_{n\to\infty} F_{\odsd}(\chi_n,u_n): (\chi_n,u_n)\in BV(\Omega; \{0,1\}){\times}SBV(\Omega;\R{d}),\\
& \, \chi_n\wsto \chi \; \text{in}\; BV(\Omega; \{0,1\}), u_n\to u \text{ in }\, L^1(\Omega;\R d), \nabla u_n\wto G \, \text{in }\, L^p(\Omega;\R{d{\times}N})\Big\}.% \; \; A \in \mathcal{A}(\Omega)
\end{split}
\end{equation} 

Our main result (see Theorem \ref{312}) states that
for $ \chi \in BV(\Omega; \{0,1\}), \; u \in SBV(\Omega; \R d)$, $G \in L^1(\Omega; \R{d{\times}N})$, and $\mathcal{F}_{\odsd}$ defined by \eqref{304bis} for functions $W^i$, $g_1^i$, $i\in\{0,1\}$ and $g_2$ satisfying hypotheses ($H_1$)--($H_7$) in Section \ref{section:statement}, for some $p > 1$ (see Section $3$), we have that
$\mathcal{F}_{\odsd}(\chi, u, G)$ admits an integral representation of the form
\begin{equation*}%\label{intrep}
\mathcal{F}_{\odsd}(\chi,u,G)= \int_\Omega H ( \chi, \nabla u, G)\, \de x + \int_{\Omega \cap S(\chi, u)} \gamma( \chi^+, \chi^-, u^+, u^-, \nu)\, \de \cH^{N-1},
\end{equation*}
where $H$ and $\gamma$ are given in \eqref{Hsd} and \eqref{gamma}, respectively.

%\begin{equation}\label{int} 
%\cF_{\odsd}(\chi, u, G) = \int_\Omega H( \chi, \nabla u, G)\, \de x + %\int_{\Omega\cap S(\chi,u)}\gamma(\chi^+, \chi^-, u^+, u^-, \nu(\chi,u))\, \de \cH^{N-1},
%\end{equation}
We observe that the bulk energy density $H\colon BV(\Omega;\{0,1\}){\times}(L^1(\Omega;\R{d{\times}N}))^2\to[0,+\infty[$ depends %essentially 
on the structured deformation on $\{\chi = 0\}$ or $\{\chi = 1\}$ (see \eqref{Hsd}) and that the interfacial energy density $\gamma\colon\{0,1\}^2{\times}(\R d)^2{\times}$ $\cS^{N-1}\to[0,+\infty[$  (see \eqref{gamma}) can be further specialized on the various pieces of the decomposition of $S(\chi, u)$, % = (S(u)\cap S(\chi)) \cup (S(u)\setminus S(\chi)) \cup (S(\chi)\setminus S(u))$. 
as noted in detail in Remark \ref{remgammaodsd}. %for a detailed explanation.
We remark also that if we consider the classical deformation setting, that is no jumps in $u$ and $G = \nabla u$, then we recover an optimal design problem studied in \cite{CZ}; if we consider just one material, then we recover the results in \cite{CF}.

\smallskip
The overall plan of this work is the following: in Section \ref{section:preliminaries} we fix the notation and recall some basic results used throughout this article. 
In Section \ref{section:statement} we formulate the problem, with detailed settings and assumptions and state the main result. 
Section \ref{section:auxiliary} is devoted to proving some auxiliary results and finally we prove the main theorem in Section \ref{section:proof}.
The proof follows the blow-up method of \cite{FM}: we will compute the Radon-Nikod\'ym derivatives of $\cF_{\odsd}$ with respect to the $\cL^N$ and $\cH^{N-1}$ measures and see that they can be bounded above and below by the densities $H$ and $\gamma$, respectively.

\section{Preliminaries}\label{section:preliminaries}
In this section we fix the notation used throughout this work and give a brief survey of functions of bounded variation and sets of finite perimeter.
\subsection{Notation}
Throughout the text $\Omega \subset \R{N}$ will denote an open bounded set.

We will use the following notations:
\begin{itemize}
\item[-] ${\mathcal O}(\Omega)$ is the family of all open subsets
of $\Omega $;
\item[-] $\cM (\Omega)$ is the set of finite Radon
measures on $\Omega$;
\item[-] $\cL^{N}$ and $\cH^{N-1}$ stand for the  $N$-dimensional Lebesgue measure 
and the $\left(  N-1\right)$-dimensional Hausdorff measure in $\R N$, respectively;
\item[-] $\left |\mu \right |$ stands for the total variation of a measure  $\mu\in \cM (\Omega)$; 
\item[-] the symbol $\de x$ will also be used to denote integration with respect to $\cL^{N}$;
\item[-] $\cS^{N-1}$ stands for the unit sphere in $\R N$;
\item[-]  $Q$ denotes the unit cube of $\R3$ centered at the origin; 
\item [-] $Q^+ := Q\cap \{ x_N > 0\}$ and $Q^-$ is defined similarly;%with one side orthogonal to $e_3$, 
\item[-] $Q_\eta$ denotes the unit cube of $\R N$ centered at the origin with two sides perpendicular to the vector $\eta\in\cS^{N-1}$;
\item[-] $Q(x, \delta):=x+\delta Q$, $Q_\eta(x, \delta):=x+\delta Q_\eta$; %$ denotes the cube in $\R3$ given by $x+\delta Q$; % centered at $x \in \Omega$ with side length $\delta$ and with one side orthogonal to $e_3$, 
%\item[-]  $Q_{\nu}(x, \delta)$ is the cube centered at $x \in \Omega$ with side length $\delta$ and with one side orthogonal to $\nu \in S^2$, 
%\item[-] cubes in $\R2$ will be denoted by $Q'$ and the other obvious adaptations of the definitions above;
%\item[-] for $\eta \in \S{1}$, we define $\tilde{\eta} \in \S2$ by $\tilde{\eta} = (\eta, 0)$,
%\item[-]  for $\nu \in S^2$, we write $\nu = (\nu_{\alpha}, \nu_3)$ where $\nu_\alpha$ denotes the vector in $\R2$ of the first two components of $\nu$.
\item[-]  $C$ represents a generic positive constant that may change from line to line;
\item[-] $\lim_{\delta, n} := \lim_{\delta \to 0^+} \lim_{n \to \infty}, \; \lim_{k,n} := \lim_{k \to \infty} \lim_{n \to \infty}.$
%\item[-] $\debaixodolim {} {n,m\to \infty}:= \debaixodolim {}
%{n\to \infty} \hspace{-0.1cm} \debaixodolim {} {m\to \infty}$
%while $\debaixodolim {} {m,n\to \infty}:= \debaixodolim {} {m\to
%\infty} \hspace{-0.1cm} \debaixodolim {} {n\to \infty}.$
\end{itemize}

\subsection{Measure Theory}

In the proof of the upper and lower bounds for the blow-up method of \cite{FM}, it is necessary to work with localizations of the functional $\cF_{\odsd}$ and see that it is a Radon measure.
The following lemma, proved in \cite{FMa}, provides sufficient conditions for a set function $\Pi: {\mathcal O}(\Omega) \rightarrow [0,+\infty)$ to be the restriction of a Radon measure on ${\mathcal O}(\Omega)$.  
It is a refinement of the De Giorgi-Letta's criterion (see \cite{DGL})  and it is of importance to apply  the Direct Method as well as for the use of relaxation methods that strongly rely on the structure of Radon measures.

%and by ${\cal B}(\Omega)$ the Borel $\sigma$-algebra in $\Omega.$

\begin{lemma}[Fonseca-Mal\'y]\label{FMaly}
Let $X$ be a locally compact Hausdorff space, let $\Pi :
{\mathcal O}(X) \to [0,+\infty]$ be a set function and $\mu$ be a finite Radon measure on $X$ 
satisfying
\begin{itemize}
\item [i)] $\Pi(U) \leq \Pi (V) + \Pi(U\setminus \overline{W})$
for all $U, V, W \in {\mathcal O}(X)$ such that $W \subset\subset V \subset \subset U$;
\item[ii)] Given $U \in {\mathcal O}(X)$, for all $\varepsilon  > 0$ there exists $U_\varepsilon \in  {\mathcal O}(X)$ such that $U_\varepsilon  \subset \subset U$ and
$\Pi (U \setminus \overline{U_\varepsilon})\leq \varepsilon $.
\item[iii)]$\Pi(X)\geq \mu(X)$.
\item[iv)] $\Pi(U) \leq \mu(\overline{U})$
for all $U \in {\mathcal O}( X)$.
\end{itemize} 

Then, $\Pi = \mu\res {\mathcal O}(X)$.
\end{lemma}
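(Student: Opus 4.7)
The plan is to realize $\Pi$ as the restriction to $\mathcal{O}(X)$ of a Radon measure $\lambda$ on $X$, and then use the comparison hypotheses (iii), (iv) to identify $\lambda$ with $\mu$.

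First, I verify that the set function $\Pi$ extends to a Borel measure on $X$. The vehicle for this is the De Giorgi-Letta criterion, which in one convenient formulation requires (a) $\Pi(\emptyset)=0$, (b) $\Pi$ is monotone, (c) $\Pi$ is countably subadditive on open sets, and (d) $\Pi$ is inner regular in the sense $\Pi(U)=\sup\{\Pi(V)\colon V\subset\subset U\}$. Property (d) is an almost immediate consequence of (ii): given $\varepsilon>0$ pick $U_\varepsilon\subset\subset U$ as in (ii), insert an intermediate open set $V$ with $\overline{U_\varepsilon}\subset V\subset\subset U$, and apply (i) with $W=U_\varepsilon$ to obtain $\Pi(U)\leq \Pi(V)+\varepsilon$. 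Properties (a)--(c) will each follow from (i) combined with (ii): for monotonicity when $U\subset U'$, inner-approximate $U$ by $V\subset\subset U$, enlarge to $V\subset\subset V'\subset\subset U'$, and pass to the limit through (i) and (ii); for subadditivity of $U_1\cup U_2$, approximate it from within by compact sets covered by inner approximations of the $U_j$'s and chain (i) finitely many times. Once (a)--(d) are established, De Giorgi-Letta yields a unique Borel measure $\lambda$ on $X$ with $\lambda(U)=\Pi(U)$ for every $U\in\mathcal{O}(X)$; $\lambda$ is finite, since $\lambda(X)=\Pi(X)\leq \mu(\overline{X})=\mu(X)<\infty$ by (iv), hence Radon.

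Second, I upgrade (iv) to $\lambda\leq \mu$ as Radon measures. For any open $U$ and any $V\subset\subset U$ the compact set $\overline{V}$ is contained in $U$, so (iv) and monotonicity of $\mu$ give $\Pi(V)\leq \mu(\overline{V})\leq \mu(U)$. Taking the supremum over $V\subset\subset U$ via the inner regularity (d) from Step 1 produces $\lambda(U)=\Pi(U)\leq \mu(U)$ for every open $U$. Standard regularity of the Radon measures $\lambda$ and $\mu$ (approximation of Borel sets from inside by compacts and from outside by opens) transfers this inequality to all Borel sets, so $\lambda\leq \mu$ on $\mathcal{B}(X)$.

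Third, I close the argument. From (iii), $\lambda(X)=\Pi(X)\geq \mu(X)$, whereas Step 2 gives the reverse, hence $\lambda(X)=\mu(X)<\infty$. For any Borel $B\subset X$, additivity together with $\lambda\leq \mu$ yields
\[
\mu(X)=\lambda(B)+\lambda(X\setminus B)\leq \mu(B)+\mu(X\setminus B)=\mu(X),
\]
which forces equality term by term; in particular $\lambda(U)=\mu(U)$ for every $U\in\mathcal{O}(X)$, which is the desired identity $\Pi=\mu\res\mathcal{O}(X)$. The main technical obstacle is Step 1: distilling monotonicity and countable subadditivity out of the single nested inequality (i), which compels us to interpose auxiliary open sets via (ii) and splice several applications of (i) together. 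The remainder is essentially bookkeeping with the regularity of Radon measures and the comparison hypothesis (iv).
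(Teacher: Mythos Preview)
The paper does not prove this lemma; it is quoted from Fonseca--Mal\'y \cite{FMa}. There is thus no in-paper proof to compare against, so I assess your argument on its own merits.

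Your Step~1 contains a genuine gap. The version of the De Giorgi--Letta criterion you invoke is incorrect: the conditions you list --- $\Pi(\emptyset)=0$, monotonicity, countable subadditivity, and inner regularity --- are \emph{not} sufficient for $\Pi$ to be the restriction of a Borel measure. De Giorgi--Letta requires in addition \emph{superadditivity on disjoint open sets}, namely $\Pi(U_1\cup U_2)\geq\Pi(U_1)+\Pi(U_2)$ whenever $U_1\cap U_2=\emptyset$. To see that your (a)--(d), and indeed hypotheses (i)--(ii) of the lemma, do not by themselves force $\Pi$ to be a measure, take any nontrivial finite Radon measure $\nu$ on $X$ and set $\Pi(U):=\sqrt{\nu(U)}$. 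Since $\sqrt{a+b}\leq\sqrt a+\sqrt b$ and $\nu$ is Radon, this $\Pi$ satisfies (i), (ii), and all of your (a)--(d), yet it is not additive and extends to no Borel measure. Consequently the missing superadditivity cannot be ``distilled'' from (i) and (ii) alone, and Step~1 fails as written.

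This is not a technicality but the heart of the matter: the whole point of the Fonseca--Mal\'y refinement is precisely to \emph{bypass} the superadditivity check (often the hardest condition in applications) by postulating a candidate measure $\mu$ through (iii) and (iv). The correct route therefore abandons the attempt to first realise $\Pi$ as an abstract measure and only then identify it; one proves $\Pi(U)=\mu(U)$ for each open $U$ directly. Your Step~2 already gives the inequality $\Pi(U)\leq\mu(U)$ correctly: from (i) and (ii) one gets $\Pi(U)\leq\Pi(V)+\varepsilon$ for some $V\subset\subset U$, and (iv) yields $\Pi(V)\leq\mu(\overline V)\leq\mu(U)$. The reverse inequality uses (i) applied to chains $W\subset\subset V\subset\subset X$ with $\overline V$ compact, combined with (iii) and the upper bound just established on the remainder $\Pi(X\setminus\overline W)\leq\mu(X\setminus\overline W)$, to obtain $\mu(\overline W)\leq\Pi(V)$; one then exhausts a general open set via (ii) and the inner regularity of $\mu$. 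All four hypotheses intervene, and none can be traded for a premature appeal to De Giorgi--Letta.
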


%Next we state a generalization of the Besicovitch differentiation Theorem due to Ambrosio and Dal Maso \cite{ADM}.

%\begin {thm} \label{general}
%If $\lambda$ and $\mu$ are Radon measures in $\Omega, \; \mu \geq 0, $ then there exists a Borel set $E \subset %\Omega$ such that $\mu(E)=0$, and for every $x \in \; {\rm supp}\; \mu \setminus E$

%$$ \frac{d\lambda}{d\mu}(x) := \lim_{\eps \rightarrow 0} \frac{\lambda (x + \eps C)}{\mu (x + \eps C)}$$
%exists and is finite whenever $C$ is a bounded, convex, open set containing the origin.
%\end{thm}

% We also recall Reshetnyak's Theorem on weak convergence of vector measures (see Reshetnyak \cite {RE}; see also %Ambrosio, Fusco and Pallara \cite{AFP}).
%\begin{thm}\label{Reshetnyak}
%Let $\nu,\,\nu_n$ be $\Rb^d-$valued finite Radon measures in $\O$ such that  $\mu_n \debaixodasetafraca {*}{} \mu$   %in $\O$ and such that $||\mu_n||(\O)\to ||\mu||(\O).$  Then

%$$\displaystyle\lim_{n\to \infty} \int_{\O} f\left(x,\frac{\mu_n}{||\mu_n||}(x)\right)d||\mu_n||(x) = \int_{\O} %f\left(x,\frac{\mu}{||\mu||}(x)\right)d||\mu||(x)$$

%\noindent for every continuous and bounded function $f:\O\times S^{d-1}\to \Rb.$

%\end{thm}

\subsection{BV functions}
We start by recalling some facts on functions of bounded variation which will be
used in the sequel. We refer to \cite{AFP} and the references therein for a detailed
theory on this subject.

A function $u \in L^1(\Omega; \R d)$ is said to be of {\em bounded
variation}, and we write $u \in BV(\Omega; \R d)$, if all of its first distributional derivatives
 $D_j u_i \in \cM (\Omega)$ for  $i=1,\ldots,d$ and  $j=1,\ldots,N.$ The matrix-valued measure whose entries are $D_j u_i$ is denoted by $Du.$ The space $BV(\Omega; \R d)$ is a Banach space when endowed with the norm
$$\lVert u\rVert_{BV} := \lVert u\rVert_{L^1} + | Du|(\Omega).$$
 By the Lebesgue Decomposition theorem $Du$ can be split into the sum of two mutually singular measures $D^{a}u$ and $D^{s}u$ (the absolutely continuous part  and singular part, respectively,
of $Du$ with respect to the Lebesgue measure $\mathcal{ L}^N$). By $\nabla u$ we denote the Radon-Nikod\'{y}m derivative of $D^{a}u$ with respect to $\mathcal L^N$, so that we can write
$$Du= \nabla u \,\mathcal L^N \res \Omega + D^{s}u.$$

Let $\Omega_u$ be the set of points where the approximate limits of $u$ exists and $S(u)$ the {\it jump set} of this function, i.e., the set of
points $x\in \Omega\setminus \Omega_u$ for which there
exists $a, \,b\in \R N$ and  a unit vector $\nu  \in \S{N-1}$, normal to $S(u)$ at $x$,
such that $a\neq b$ and
\begin{equation} \label{jump1} \lim_{\eps \to 0^+} \frac {1}{\e^N} \int_{\{ y \in Q_{\nu}(x,
\e) : (y-x)\cdot\nu  > 0 \}} | u(y) - a| \, \de y = 0
\end{equation}
and
\begin{equation}\label{jump2} \lim_{\eps \to 0^+} \frac {1}{\e^N} \int_{\{ y \in Q_{\nu}(x,
\e) : (y-x)\cdot\nu  < 0 \}} | u(y) - b| \, \de y = 0.
\end{equation}
The triple $(a,b,\nu)$ is uniquely determined by (\ref{jump1}) and (\ref{jump2}) up to permutation of $(a,b)$, and a change of sign of $\nu$ and is denoted by $\left(u^+ (x),u^- (x),\nu(u) (x)\right)$.
%It follows that

%$$||Du||(\Om) \geq \int_{\O} |\nabla u|\, dx + \int_{S(u)} |[u]|\, d{\mathcal H}^{N-1},$$
If $u \in BV(\Omega;\R{d})$ it is well known that $S(u)$ is countably $(N-1)$-rectifiable,
i.e.
$$S(u) = \bigcup_{n=1}^{\infty}K_n \cup {\mathcal N},$$
where ${\mathcal H}^{N-1}({\mathcal N}) = 0$ and $K_n$ are compact subsets of $C^1$ hypersurfaces.
Furthermore,  ${\mathcal H}^{N-1}((\Omega\setminus \Omega_u) \setminus S(u)) = 0$ and the following decomposition holds
$$Du= \nabla u \,\mathcal L^N + [u] \otimes \nu(u) \,{\mathcal H}^{N-1}\res S(u) + D^c u,$$
where $[u]:= u^+ - u^-$ and $D^c u$ is the Cantor part of the measure $Du,$ i.e., $D^c u= D^{s}u\lfloor (\Omega_u)$.

The space of \emph{special functions of bounded variation}, $SBV(\Omega; \R d)$,
introduced by De Giorgi and Ambrosio in \cite{DGA} to study free discontinuity problems, is the space of
functions $u \in BV(\Omega; \R d)$ such that $D^cu = 0$, i.e. for which
$$ Du = \nabla u\, \cL^N + [u] \otimes \nu(u) \, \cH^{N-1} \res S(u).$$

\begin{proposition}\label{thm2.3BBBF}
If $w\in BV(\Omega;\mathbb{R}^{d})  $ then

\begin{enumerate}
\item[i)] for $\mathcal{L}^{N}$-a.e. $x\in\Omega$%
\begin{equation*}
\lim_{\varepsilon\rightarrow0^{+}}\frac{1}{\varepsilon}\left\{  \frac
{1}{\mathcal{\varepsilon}^{N}}\int_{Q\left(  x,\varepsilon\right)
}\left\vert w(y)  -w(x)  -\nabla w\left(
x\right)  \cdot( y-x)  \right\vert ^{\frac{N}{N-1}%
}\de y\right\}  ^{\frac{N-1}{N}}=0; %\label{approximate differentiability}%
\end{equation*}

\item[ii)] for $\cH^{N-1}$-a.e. $x\in S(w)$ there exist $w^{+}(x), w^{-}(x)\in\mathbb{R}^{d}$, and $\nu(x)  \in \cS^{N-1}$ normal to $S(w)$ at $x$, such that
\[
\lim_{\varepsilon\rightarrow0^{+}}\frac{1}{\varepsilon^{N}}\int_{Q_{\nu}%
^{+}\left(  x,\varepsilon\right)  }\left\vert w\left(  y\right)  -w^{+}\left(
x\right)  \right\vert \de y=0,\qquad\lim_{\varepsilon\rightarrow0^{+}}\frac
{1}{\varepsilon^{N}}\int_{Q_{\nu}^{-}\left(  x,\varepsilon\right)  }\left\vert
w\left(  y\right)  -w^{-}\left(  x\right)  \right\vert \de y=0,
\]
where $Q_{\nu}^{+}( x,\varepsilon)  :=\{y\in Q_{\nu}(x,\varepsilon) : (y-x) \cdot \nu >0\}  $ and
$Q_{\nu}^{-}( x,\varepsilon):=\{y\in Q_{\nu}(x,\varepsilon)  : (y-x)\cdot \nu <0\} $;

\item[iii)] for $\cH^{N-1}$-a.e. $x\in\Omega\setminus S(w)$%
\[
\lim_{\varepsilon\rightarrow0^{+}}\frac{1}{\mathcal{\varepsilon}^{N}}%
\int_{Q\left(  x,\varepsilon\right)  }\left\vert w(y)
-w\left(  x\right)  \right\vert \de y=0.
\]

\end{enumerate}
\end{proposition}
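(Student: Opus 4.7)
\medskip

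\noindent\textbf{Proof plan.} All three statements are classical facts about $BV$ functions (they can be read off from Theorems 3.83, 3.77 and the structure theory of \cite{AFP}), and the natural plan is to reduce them to the Lebesgue-type differentiation of the measures $|Dw|$ and $|D^s w|$ combined with a suitable Poincaré inequality on small cubes.

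\smallskip

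The plan for (i) is to use that $w$ is approximately differentiable $\mathcal{L}^N$-a.e.\ with approximate differential equal to $\nabla w$. First, I would fix $x\in\Omega$ in the Lebesgue set of $\nabla w$ at which the singular part $|D^s w|$ has zero $N$-dimensional density; both conditions hold at $\mathcal{L}^N$-a.e.\ $x$ by Besicovitch differentiation. Apply the $BV$ Poincaré--Sobolev inequality on $Q(x,\varepsilon)$ to the function $y\mapsto w(y)-w(x)-\nabla w(x)\cdot(y-x)$ (after averaging to fix the additive constant via a standard Poincaré estimate for the mean), to obtain
\begin{equation*}
\Bigl(\fint_{Q(x,\varepsilon)}|w(y)-w(x)-\nabla w(x)\cdot(y-x)|^{N/(N-1)}\,\de y\Bigr)^{(N-1)/N}\le C\,\frac{|Dw-\nabla w(x)\mathcal{L}^N|(Q(x,\varepsilon))}{\varepsilon^{N-1}}.
\end{equation*}
Dividing by $\varepsilon$ and letting $\varepsilon\to0^+$, the right-hand side tends to $0$ by the Lebesgue differentiation for $\nabla w\,\mathcal{L}^N$ and the vanishing density of $|D^s w|$, giving (i).

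\smallskip

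For (ii) I would invoke the rectifiability of $S(w)$: one can cover $S(w)$ up to an $\mathcal{H}^{N-1}$-null set by a countable family of $C^1$ hypersurfaces $K_n$. At $\mathcal{H}^{N-1}$-a.e.\ $x\in K_n$ the tangent plane exists and is normal to some $\nu(x)\in\cS^{N-1}$, and the upper density of $|D^s w|\llcorner(\Omega\setminus S(w))$ with respect to $\mathcal{H}^{N-1}\llcorner S(w)$ vanishes, so that locally $Dw$ is asymptotically concentrated on $K_n$. A blow-up of $w$ at $x$ along cubes $Q_\nu(x,\varepsilon)$, combined with the $BV$ compactness theorem, shows that any subsequential limit is a pure jump between two values $w^+(x)$, $w^-(x)$; uniqueness of these traces (up to exchanging them) follows from the definition (\ref{jump1})--(\ref{jump2}) and the fact that the jump part $[w]\otimes\nu\,\mathcal{H}^{N-1}\llcorner S(w)$ has a well-defined Radon--Nikod\'ym density with respect to $\mathcal{H}^{N-1}\llcorner S(w)$.

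\smallskip

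Finally, (iii) follows by the same Poincaré-type argument as in (i), applied to points $x\in\Omega\setminus S(w)$ at which the $(N-1)$-dimensional upper density of $|Dw|$ is zero. At such $x$ one has
\begin{equation*}
\fint_{Q(x,\varepsilon)}|w(y)-w(x)|\,\de y\le C\,\varepsilon\,\frac{|Dw|(Q(x,\varepsilon))}{\varepsilon^{N-1}}\longrightarrow 0,
\end{equation*}
once $w(x)$ is identified with the approximate limit of $w$ at $x$, which exists $\mathcal{H}^{N-1}$-a.e.\ outside $S(w)$ by the fine properties of $BV$ functions. The main technical point throughout is the sharp density estimates on $|D^s w|$ (Besicovitch/Preiss-type) that guarantee the right vanishing rates in each regime; once these are in place the three assertions are routine consequences of Poincaré--Sobolev and blow-up.
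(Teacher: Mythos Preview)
The paper does not prove this proposition: it is stated in the preliminaries as a background fact about $BV$ functions, with the blanket reference to \cite{AFP} for the theory (see the opening sentence of the $BV$ subsection). So there is no ``paper's own proof'' to compare against; your outline is effectively a sketch of the standard arguments one finds in \cite{AFP} (Theorems~3.77, 3.78, 3.83 and the Federer--Vol'pert structure theorem), which is exactly what the authors are invoking.

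That said, your argument for (iii) has a small gap worth flagging. You reduce the claim to the vanishing of $\varepsilon^{-(N-1)}|Dw|(Q(x,\varepsilon))$ at $\mathcal{H}^{N-1}$-a.e.\ $x\in\Omega\setminus S(w)$, but this $(N-1)$-density need not vanish $\mathcal{H}^{N-1}$-a.e.\ for a general $BV$ function: the set where the upper $(N-1)$-density of $|Dw|$ is positive is only $\mathcal{H}^{N-1}$-$\sigma$-finite, not $\mathcal{H}^{N-1}$-null, and both the absolutely continuous and Cantor parts can contribute here. Your fallback (``$w(x)$ is identified with the approximate limit, which exists $\mathcal{H}^{N-1}$-a.e.\ by the fine properties'') is precisely the content of (iii), so the argument becomes circular. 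The clean route is the Federer--Vol'pert theorem directly, which establishes approximate continuity $\mathcal{H}^{N-1}$-a.e.\ off $S(w)$ without going through a Poincar\'e bound. Your sketches for (i) and (ii) are fine and match the classical proofs.
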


We next recall some properties of $BV$ functions used in the sequel. We start with the following lemma whose proof can be found in \cite{CF}:
%\begin{thm}\label{affine}
%If $u \in BV(\Omega; \Rb^d),$ then for ${\cal L}^N$ a.e. $x \in\Omega$
%$$ \lim_{\eps \rightarrow 0} \frac{1}{\eps^N }\left \{ \int_{Q(x, \eps)} |u(y) - u(x) - \nabla u(x).(y-x)|^{\frac{N}{N-1}} \; dy \right\}^{\frac{N-1}{N}} =0.$$
%\end{thm}
\begin{lemma}\label{ctap}
Let $u \in BV(\Omega; \R d)$. Then there exist piecewise constant functions $u_n\in SBV(\Omega;\R d)$  such that $u_n \to u$ in $L^1(\Omega; \R d)$ and
$$| Du|(\Omega) = \lim_{n\to \infty}| Du_n|(\Omega) = \lim_{n\to \infty} \int_{S (u_n)} |[u_n](x)|\; \de{\mathcal H}^{N-1}.$$
\end{lemma}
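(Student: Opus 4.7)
The second equality in the conclusion is automatic: any piecewise constant $u_n\in SBV(\Omega;\R d)$ has $\nabla u_n\equiv 0$ a.e.\ and $D^c u_n=0$, so that
\begin{equation*}
Du_n = [u_n]\otimes\nu(u_n)\,\cH^{N-1}\res S(u_n),
\end{equation*}
whence $|Du_n|(\Omega)=\int_{S(u_n)}|[u_n]|\,\de\cH^{N-1}$. The real content is therefore the strict density of piecewise constants in $BV(\Omega;\R d)$: one must construct $u_n\to u$ in $L^1$ with the strict convergence $|Du_n|(\Omega)\to|Du|(\Omega)$.

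My plan is a two-step approximation followed by a diagonalization. First I would reduce to the smooth case: by the classical strict density of smooth functions in $BV$ (Anzellotti--Giaquinta approximation via mollification, see \cite{AFP}), there exist $v_k\in C^\infty(\Omega;\R d)\cap BV(\Omega;\R d)$ with $v_k\to u$ in $L^1$ and $|Dv_k|(\Omega)\to|Du|(\Omega)$. It then suffices to approximate each smooth $v_k$ by piecewise constants with strict convergence of the total variation, and a diagonal extraction produces the desired $u_n$.

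For a smooth $v$, I would discretize both in range and in domain. In the scalar case this is transparent via the coarea formula: setting $u_m:=\lfloor mv\rfloor/m$, the level sets $\{v>k/m\}$ are sets of finite perimeter and
\begin{equation*}
|Du_m|(\Omega)=\frac{1}{m}\sum_{k\in\mathbb{Z}}P(\{v>k/m\},\Omega),
\end{equation*}
a Riemann sum for $\int_\mathbb{R}P(\{v>t\},\Omega)\,\de t=|Dv|(\Omega)$. In the vector-valued case, the analogous quantization (or averaging on cells of a small grid) produces piecewise constants $u_m\to v$ in $L^1$; the convergence of the total variation is the delicate point.

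The main obstacle is precisely this last point. A naive axis-aligned cube grid yields only
\begin{equation*}
\limsup_{m\to\infty}|Du_m|(\Omega)\leq\int_\Omega\sum_{i=1}^{N}|\partial_i v|\,\de x,
\end{equation*}
which generally strictly exceeds $|Dv|(\Omega)=\int_\Omega|\nabla v|\,\de x$ because of the $\ell^1$ versus $\ell^2$ discrepancy between face-normal contributions and the Frobenius norm underlying $|Dv|$. The resolution is to use a mesh locally adapted to the principal direction of $\nabla v$: partition $\Omega$ into small regions on which $\nabla v$ is nearly constant, and on each such region orient the cell walls perpendicular to its principal direction, so that the dominant jumps of the averaged $u_m$ align with $\nabla v$. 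Combining this geometric construction with the lower semicontinuity $|Dv|(\Omega)\leq\liminf_m|Du_m|(\Omega)$ (which comes for free from $u_m\to v$ in $L^1$) yields the strict convergence, and a final diagonalization in $k,m$ closes the argument.
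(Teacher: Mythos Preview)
The paper gives no proof of its own---it simply cites \cite{CF}---so there is nothing in-paper to compare against. Judged on its own merits, your argument is correct for the second equality and for the scalar case $d=1$ (smooth strict approximation followed by the coarea/quantization step). The genuine gap is the vector-valued step: the proposed ``mesh adapted to the principal direction of $\nabla v$'' is not just vague (a $d\times N$ matrix has no single gradient direction to align with); no choice of mesh can work. For any piecewise constant $u_n\in SBV(\Omega;\R d)$ one has $\tfrac{dDu_n}{d|Du_n|}=\tfrac{[u_n]}{|[u_n]|}\otimes\nu(u_n)$, a \emph{rank-one} matrix $|Du_n|$-a.e. If $u_n\to u$ in $L^1$ with $|Du_n|(\Omega)\to|Du|(\Omega)$, Reshetnyak's continuity theorem applied to the second singular value $\sigma_2$ (continuous, positively $1$-homogeneous, vanishing on rank-one matrices) yields
\[
0=\lim_n\int_\Omega\sigma_2\,d|Du_n|=\int_\Omega\sigma_2\Big(\tfrac{dDu}{d|Du|}\Big)\,d|Du|,
\]
forcing $\nabla u$ to be rank-one $\cL^N$-a.e. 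Equivalently: since the Frobenius and nuclear norms agree on rank-one matrices, $|Du_n|(\Omega)$ coincides with the nuclear-norm variation of $Du_n$, whose lower semicontinuity gives $\liminf_n|Du_n|(\Omega)\ge\int_\Omega\|\nabla u\|_*\,\de x$, and this strictly exceeds $|Du|(\Omega)$ whenever $\nabla u$ is not rank-one a.e.\ (for $u(x)=x$ on $(0,1)^2$ one gets $2>\sqrt2$).

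So strict approximation by piecewise constants in the standard total variation fails for general $d,N\ge2$, and your gap is not repairable. What \emph{is} true---and is all the paper actually uses in Lemma~\ref{680bis}, Lemma~\ref{Lipgammaodsd}, Proposition~\ref{asprop2.22CF} and the upper-bound constructions---is the weaker bound $\limsup_n|Du_n|(\Omega)\le C\,|Du|(\Omega)$ for a dimensional constant $C$; your axis-aligned grid already delivers this.
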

%The following Lemma is used in the sequel and its proof can be found in Choksy and Fonseca \cite{CF}.
% \begin{lemma} \label{trace}
 %Let $u \in BV(Q; \Rb^d)$ satisfy $u|_{\partial Q} =u_0$ for some $u_0 \in C(\bar{\Omega}; \Rb^d).$
% Then, for every $\eps >0$ there exists $0 < r_{\eps} < 1$ such that $r_{\eps} \rightarrow 1^- \; (\eps \rightarrow 0),$ and
% $$ \int_{\partial Q(0, r_{\eps}) }|u(x) - u_0(x)| \; d{\cal H}^{N-1}(x) < \eps.$$
% \end{lemma}
%\medbreak
%We denote by $ SBV_0(\Omega)$ the set of $u \in SBV(\Omega)$ such that $Tu =0$ on
%$\partial \Omega.$
The next result is a Lusin-type theorem for gradients due to  Alberti \cite{AL} and is essential to our arguments.
\begin{theorem}\label{Al}
Let $f \in L^1(\Omega; \R{d{\times} N})$. 
Then there exist $u \in SBV(\Omega; \R d)$ and a Borel function $g\colon: \Omega\to\R{d{\times} N}$ such that
 $$ Du = f \,{\cL}^N + g\, {\mathcal H}^{N-1}\res S(u),$$
$$ \int_{S(u)} |g| \, \de \cH^{N-1} \leq C \lVert f\rVert_{L^1(\Omega; \R{d {\times} N})}.$$
 \end{theorem}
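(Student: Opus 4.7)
The plan is to follow Alberti's strategy: establish a local building block producing small $SBV$ functions with prescribed constant gradient, upgrade to piecewise constant data, and finally reach general $f \in L^1$ by a telescoping series.

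\textbf{Local building block.} Given a constant matrix $M \in \R{d{\times}N}$, a cube $Q = Q(x_0, r) \subset \Omega$, and any $\delta > 0$, I would construct $v \in SBV(\Omega; \R d)$ with $v \equiv 0$ outside $Q$, $\nabla v \equiv M$ a.e.\ on $Q$, $|D^s v|(\Omega) \leq C |M|\, |Q|$, and $\|v\|_{L^\infty} \leq \delta$. The construction is a sawtooth: partition $Q$ into subcubes of small side length $r' \ll r$, and on each subcube $Q_j$ centered at $x_j$ set $v(x) := M(x - x_j)$, extended by zero outside $Q$. Then $\nabla v = M$ a.e., the jump set is contained in $\partial Q \cup \bigcup_j \partial Q_j$, which is $(N-1)$-rectifiable with $\cH^{N-1}$-measure $\lesssim r^N / r'$, while each jump magnitude is $\lesssim |M|\, r'$; multiplying yields a jump mass $\lesssim |M||Q|$, and $\|v\|_{L^\infty} \leq |M|\, r'$ can be made arbitrarily small by choosing $r'$.

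\textbf{Piecewise constant $f$ and general $f$ via telescoping.} For piecewise constant $f = \sum_i M_i \chi_{Q_i}$, applying the local lemma on each $Q_i$ and summing yields $u$ with $\nabla u = f$ a.e.\ and $|D^s u|(\Omega) \leq C \sum_i |M_i||Q_i| = C\|f\|_{L^1}$. For general $f \in L^1(\Omega; \R{d{\times}N})$, I would choose piecewise constant approximations $f_n$ on refining dyadic grids with $\|f-f_n\|_{L^1} \leq 2^{-n}\|f\|_{L^1}$, so that $\sum_n \|f_n - f_{n-1}\|_{L^1} \leq C\|f\|_{L^1}$. Applying the piecewise constant construction to each difference $f_n - f_{n-1}$, with $L^\infty$-smallness parameter $\delta_n := 2^{-n}$, yields $SBV$ functions $w_n$ with $\nabla w_n = f_n - f_{n-1}$ a.e., $|D^s w_n|(\Omega) \leq C\|f_n - f_{n-1}\|_{L^1}$, and $\|w_n\|_{L^\infty} \leq 2^{-n}$. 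The series $u := u_0 + \sum_{n \geq 1} w_n$ converges uniformly past the first term, the identity $\nabla u = f$ a.e.\ follows from $L^1$-convergence of the partial sums of gradients, and the subadditivity of total variation gives $|D^s u|(\Omega) \leq C\|f\|_{L^1}$. The Borel function $g$ of the statement is then $g := [u]\otimes\nu(u)$ on $S(u)$.

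\textbf{Main obstacle.} The most delicate point is ensuring the limiting $u$ remains in $SBV$, i.e.\ that no Cantor part is generated by the telescoping limit, even though the jump sets of the $w_n$ accumulate at all dyadic scales. This is resolved by Ambrosio's $SBV$-compactness theorem: the uniform bounds $\int_\Omega |\nabla u_n|\,\de x \leq C\|f\|_{L^1}$ and $|D^s u_n|(\Omega) \leq C\|f\|_{L^1}$ along the partial sums, together with $u_n \to u$ in $L^1$, force the limit to lie in $SBV$, while lower semicontinuity of $\mu\mapsto|\mu|(\Omega)$ under weak-$\ast$ convergence of measures preserves the quantitative estimate on the jump mass.
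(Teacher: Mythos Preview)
The paper does not prove Theorem~\ref{Al}; it is quoted from Alberti~\cite{AL} and used as a black box throughout. Your overall strategy---sawtooth building blocks on cubes, extension to piecewise constant data, then a telescoping series for general $f$---is indeed Alberti's, and the construction is sound up to the final step.

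The genuine gap is your appeal to Ambrosio's $SBV$-compactness theorem to conclude $u\in SBV$. That theorem requires a uniform \emph{superlinear} bound on the gradients (e.g.\ $\sup_n\int_\Omega\phi(|\nabla u_n|)\,\de x<\infty$ with $\phi(t)/t\to\infty$) together with a uniform bound on $\cH^{N-1}(S(u_n))$. You have neither: you control only $\|\nabla u_n\|_{L^1}$ and the jump \emph{mass} $\int_{S(u_n)}|[u_n]|\,\de\cH^{N-1}$, while $\cH^{N-1}(S(u_n))\to\infty$ as the dyadic scales accumulate. Under such hypotheses $SBV$ is not closed---the Cantor staircase is an $L^1$-limit of step functions with zero gradients and uniformly bounded jump mass, yet its derivative is purely Cantor.

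What actually rescues the construction is a direct argument using the geometry of your jump sets. Since $\sum_n|Dw_n|(\Omega)<\infty$, the series converges in the $BV$-norm and $Du=f\,\cL^N+\mu$ with $\mu=\sum_n D^s w_n$. Each $D^s w_n$ is carried by a finite union of hyperplane faces, so $\mu$ is concentrated on a countably $(N-1)$-rectifiable set $\Sigma=\bigcup_n S(w_n)$ which is $\cH^{N-1}$-$\sigma$-finite, and $\mu\ll\cH^{N-1}\res\Sigma$ (since each summand is). Writing $\Sigma=\bigcup_k\Sigma_k$ with $\cH^{N-1}(\Sigma_k)<\infty$ and recalling that the Cantor part of any $BV$ function vanishes on sets of finite $\cH^{N-1}$-measure, one gets $|D^c u|(\Sigma)=0$; since $D^s u=\mu$ is concentrated on $\Sigma$, this forces $D^c u=0$. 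Replace the compactness appeal with this argument and your proof goes through.
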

\begin{remark}\label{Alf}
From the proof of Theorem \ref{Al} it also follows that
$$ \lVert u\rVert_{L^1(\Omega;\R d)} \leq 2 C\lVert f\rVert_{L^1(\Omega; \R{d {\times}N})}.$$
\end{remark}
%The following result, that will be exploited in the sequel, can be found in \cite[Lemma 2.6]{FM}.
\begin{lemma}[{\cite[Lemma 2.6]{FM}}]\label{lemma2.5BBBF}
Let $w \in BV(\Omega;\R d)$, for ${\mathcal H}^{N-1}$-a.e. $x$ in $S(w)$,
$$
\displaystyle{\lim_{\e \to 0^+} \frac{1}{\e^{N-1}} \int_{J_w \cap  Q_{\nu(x)}(x, \e)} |w^+(y)- w^-(y)| \,\de {\mathcal H}^{N-1} = |w^+(x)- w^-(x)|.}
$$
\end{lemma}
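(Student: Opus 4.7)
The statement asserts that for $\mathcal{H}^{N-1}$-a.e. $x\in S(w)$, the average of $|[w]|$ over the trace of $S(w)$ in an oriented cube of side $\e$ converges to $|[w](x)|$. The plan is to view this as a Lebesgue-differentiation statement for the Radon measure $\mu := |[w]|\,\mathcal{H}^{N-1}\res S(w)$, decoupled into the tangent-plane density of $S(w)$ and the Lebesgue-point property of $|[w]|$ on $S(w)$.

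First I would observe that $|[w]| \in L^1(S(w),\mathcal{H}^{N-1})$: indeed, by the BV decomposition recalled above, $|Dw|(\Omega) \geq \int_{S(w)}|[w]|\,d\mathcal{H}^{N-1}$, so $\mu$ is a finite Radon measure on $\Omega$, absolutely continuous with respect to $\mathcal{H}^{N-1}\res S(w)$ with density $|[w]|$. Next, since $S(w)$ is countably $(N-1)$-rectifiable with measure-theoretic normal $\nu(x)$ defined $\mathcal{H}^{N-1}$-a.e., the rectifiability theorem gives, for $\mathcal{H}^{N-1}$-a.e. $x \in S(w)$,
\begin{equation*}
\lim_{\e\to 0^+}\frac{\mathcal{H}^{N-1}\!\left(S(w)\cap Q_{\nu(x)}(x,\e)\right)}{\e^{N-1}}=1,
\end{equation*}
because the blow-ups $\e^{-1}(S(w)-x)$ converge locally in measure to the hyperplane $\nu(x)^{\perp}$, which fills the horizontal cross-section of $Q_{\nu(x)}(x,\e)$.

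Then I would apply a differentiation theorem for Radon measures on rectifiable sets with respect to the family of oriented cubes $\{Q_{\nu(x)}(x,\e)\}_{\e>0}$. Concretely, combining the rectifiable Besicovitch-type differentiation (valid since the oriented cubes form an admissible fine covering, being comparable up to bounded ratio to balls centered at $x$) with the density of $\mathcal{H}^{N-1}\res S(w)$ computed above, one gets that $\mathcal{H}^{N-1}$-a.e. $x\in S(w)$ is a Lebesgue point of $|[w]|$ in the sense
\begin{equation*}
\lim_{\e\to 0^+}\frac{1}{\mathcal{H}^{N-1}(S(w)\cap Q_{\nu(x)}(x,\e))}\int_{S(w)\cap Q_{\nu(x)}(x,\e)}|[w](y)|\,d\mathcal{H}^{N-1}(y)=|[w](x)|.
\end{equation*}
Multiplying this Lebesgue-point limit by the tangent-plane density limit yields the claim, since for almost every such $x$ the product of the two limits gives exactly $|w^+(x)-w^-(x)|$.

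The main obstacle in this plan is the rigorous justification of the Lebesgue-point statement along the rectifiable set $S(w)$ using oriented cubes $Q_{\nu(x)}$ rather than isotropic balls. This step requires checking that, for $\mathcal{H}^{N-1}$-a.e. $x\in S(w)$, the family $\{Q_{\nu(x)}(x,\e)\}$ is a Vitali-type differentiation basis for the measure $\mathcal{H}^{N-1}\res S(w)$; this is handled either by sandwiching $Q_{\nu(x)}(x,\e)$ between two concentric balls of comparable radii (so that the standard Besicovitch theorem transfers) and controlling the $\mathcal{H}^{N-1}$-mass carried by the annular difference via the tangent-plane convergence, or by invoking a direct differentiation result for Radon measures on rectifiable sets with respect to cylinders adapted to the approximate tangent plane. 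Once that is in place, the combination described above delivers the limit.
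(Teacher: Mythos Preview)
The paper does not supply its own proof of this lemma: it is quoted verbatim as \cite[Lemma~2.6]{FM} and used as a black box, so there is nothing in the paper to compare your argument against.

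That said, your outline is the standard route and is correct. The decomposition into (i) the tangent-plane density $\lim_{\e\to0}\e^{1-N}\mathcal{H}^{N-1}(S(w)\cap Q_{\nu(x)}(x,\e))=1$ coming from rectifiability of $S(w)$, and (ii) the Lebesgue-point property of $|[w]|\in L^1(S(w),\mathcal{H}^{N-1}\res S(w))$, is exactly what underlies the result in \cite{FM}. Your worry about using oriented cubes rather than balls is legitimate but easily disposed of: for each fixed $x$ the family $\{Q_{\nu(x)}(x,\e)\}_{\e>0}$ consists of sets sandwiched between balls of radii $\e/2$ and $\sqrt{N}\,\e/2$, so the Besicovitch differentiation theorem for Radon measures applies directly (the eccentricity is bounded by a dimensional constant, and the orientation $\nu(x)$ is fixed once $x$ is fixed). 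No genuinely new idea is needed beyond what you have written.
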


\subsection{Sets of finite perimeter} 
In the following we give some preliminary notions related with sets of finite
perimeter. For a detailed treatment we refer to \cite{AFP, Z}.

\begin{definition}
\label{Setsoffiniteperimeter} Let $E$ be an $\mathcal{L}^{N}$-measurable
subset of $\mathbb{R}^{N}$. For any open set $\Omega\subset\mathbb{R}^{N}$ the
perimeter of $E$ in $\Omega$, denoted by $P(E;\Omega)$, is the variation of its characteristic function 
$\chi$ in $\Omega$, i.e.
\begin{equation*}
%\label{perimeter}
P(E;\Omega):=\sup\left\{  \int_{E} \div\varphi \,\de x:
\varphi\in C^{1}_{c}(\Omega;\mathbb{R}^{d}), \|\varphi\|_{L^{\infty}}%
\leq1\right\}.
\end{equation*}
We say that $E$ is a set of finite perimeter in $\Omega$ if $P(E;\Omega) <+ \infty$.
\end{definition}

If $\mathcal{L}^{N}(E \cap\Omega)$ is finite, then, denoting by $\chi$ its characteristic function, we have that $\chi
\in L^{1}(\Omega)$  (see \cite[Proposition 3.6]{AFP}). It follows
that $E$ has finite perimeter in $\Omega$ if and only if $\chi \in
BV(\Omega)$ and $P(E;\Omega)$ coincides with $|D\chi|(\Omega)$, the total
variation in $\Omega$ of the distributional derivative of $\chi$.
%\begin{theorem}
%\label{thm3.36AFP} For any set $E$ of finite perimeter in $\Omega$ the
%distributional derivative of $D\chi_{E}$ is an $\mathbb{R}^{d}$-valued Radon
%measure in $\Omega$. Moreover, $P(E;\Omega)=|D\chi_{E}|(\Omega)$
%Moreover,  a
%generalized Gauss-Green formula holds:
%\begin{equation}\nonumber
%{\int_{E}\div\varphi \,\de x=\int_{\Omega}\langle\nu_{E},\varphi\rangle\,\de |D\chi
%_{E}|\qquad\forall\,\varphi\in C_{c}^{1}(\Omega;\mathbb{R}^{d})},
%\label{3.31AFP}%
%\end{equation}
%where $D\chi_{E}=\nu_{E}|D\chi_{E}|$ is the polar decomposition of $D\chi_{E}$.

%\note{Do we need?}\blu{(I think not)}
%We also recall that, when dealing with sets of finite measure, a sequence of
%sets $\{E_{n}\}$ converges to $E$ in measure in $\Omega$ if $\mathcal{L}%
%^{N}(\Omega\cap(E_{n}\Delta E))$ converges to $0$ as $n\rightarrow\infty$,
%where $\Delta$ stands for the symmetric difference. Analogously, the local
%convergence in measure corresponds to the above convergence in measure for any
%open set $A\subset\subset\Omega$. These convergences are equivalent to $L^{1}(\Omega)$ and $L^{1}%
%_{\mathrm{loc}}(\Omega)$ convergences of the characteristic functions. We also
%remind that the local convergence in measure in $\Omega$ is equivalent to
%convergence in measure in domains $\Omega$ with finite measure.
%\note{Do we need?}\blu{ (In fact not, but I let it stay because of the statement of the next lemma. Otherwise we should write convergence in measure....but we can take it away)}

The following approximation result can be found in \cite{BA}
\begin{lemma}\label{polyhedra}
Let $E$ be a set of finite perimeter in $\Omega$. Then, there exist a sequence of polyhedra $E_n$, with characteristic functions $\chi_n$ such that 
   $\chi_n\to \chi$ in $L^1(\Omega)$ and $P (E_n;\Omega)\to  P(E;\Omega)$.
 \end{lemma}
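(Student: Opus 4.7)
The plan is to obtain the polyhedral approximants via a two-step procedure: first smoothen $\chi_E$ and pick a regular level set to get a sequence of sets with $C^\infty$-boundaries and with perimeters converging strictly, and then approximate each such smooth set by a polyhedron. A diagonal extraction yields the required sequence.

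For the first step, I would take a standard mollifier $\rho_\e$ and set $v_\e := \rho_\e * \chi_E$, extended so that $v_\e \to \chi_E$ in $L^1(\Omega)$ and
$$\int_\Omega |\nabla v_\e|\,\de x \to |D\chi_E|(\Omega) = P(E;\Omega),$$
which is the classical strict smooth approximation in $BV$ (Anzellotti--Giaquinta). By the coarea formula,
$$\int_\Omega |\nabla v_\e|\,\de x = \int_0^1 P(\{v_\e > t\};\Omega)\,\de t,$$
so the mean value theorem produces $t_\e \in (0,1)$ with $P(\{v_\e > t_\e\};\Omega) \leq \int_\Omega|\nabla v_\e|\,\de x$; Sard's theorem lets one additionally require $t_\e$ to be a regular value of $v_\e$, so that $A_\e := \{v_\e > t_\e\}$ has a $C^\infty$-hypersurface as boundary. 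Since $v_\e \to \chi_E$ in measure, one checks $\chi_{A_\e}\to\chi_E$ in $L^1(\Omega)$, and combining the upper bound above with the lower semicontinuity of the perimeter yields $P(A_\e;\Omega)\to P(E;\Omega)$.

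For the second step, $\partial A_\e$ is a compact smooth hypersurface in $\Omega$; a sufficiently fine triangulation of a thin tubular neighbourhood of $\partial A_\e$ produces polyhedra $P_{\e,k}$ whose boundaries approximate $\partial A_\e$ in the $C^1$-topology, and in particular $\chi_{P_{\e,k}}\to\chi_{A_\e}$ in $L^1(\Omega)$ and $\cH^{N-1}(\partial P_{\e,k}\cap\Omega) \to \cH^{N-1}(\partial A_\e\cap\Omega) = P(A_\e;\Omega)$ as $k\to\infty$. A standard diagonal argument then gives indices $\e_n \to 0$, $k_n\to\infty$ such that the polyhedra $E_n := P_{\e_n,k_n}$ with characteristic functions $\chi_n$ satisfy $\chi_n \to \chi$ in $L^1(\Omega)$ and $P(E_n;\Omega) \to P(E;\Omega)$.

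The main technical obstacle I anticipate is controlling the behaviour of the approximants near $\partial\Omega$: both the mollification and the triangulation risk producing spurious perimeter on, or losing perimeter from, a neighbourhood of $\partial\Omega$. This is handled by approximating $E$ from the inside on a sequence of open sets $\Omega_j \subset\subset \Omega$ with $|D\chi_E|(\partial\Omega_j) = 0$ (which holds for a.e.\ choice of a one-parameter family, by finiteness of $|D\chi_E|$), carrying out the two-step construction on each $\Omega_j$, and passing to a further diagonal subsequence. Everything else is standard once the strict smooth approximation and the coarea/Sard selection are in place.
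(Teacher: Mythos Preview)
The paper does not prove this lemma; it simply cites Baldo \cite{BA} for the result. Your two-step outline (strict $BV$ approximation by smooth sets via mollification plus coarea/Sard, then polyhedral approximation of each smooth set, followed by a diagonal extraction) is precisely the strategy used in the literature, so in that sense you are recovering the cited proof rather than diverging from it.

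Two technical points deserve tightening. First, in the level-set selection you need $t_\e$ to stay in a fixed subinterval $(\delta,1-\delta)$ in order to conclude $\chi_{A_\e}\to\chi_E$ in $L^1$; the bare mean-value argument over $(0,1)$ does not prevent $t_\e\to 0$ or $t_\e\to 1$. The standard fix is a Chebyshev-type argument: since $\int_0^1 P(\{v_\e>t\};\Omega)\,\de t \to P(E;\Omega)$, for any $\eta>0$ the set of $t\in(\delta,1-\delta)$ with $P(\{v_\e>t\};\Omega)>(1+\eta)P(E;\Omega)$ has small measure for $\e$ small, so one can choose $t_\e\in(\delta,1-\delta)$ regular with perimeter arbitrarily close to $P(E;\Omega)$. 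Second, in the triangulation step you assert that $\partial A_\e$ is a compact smooth hypersurface in $\Omega$, which is false in general since it may reach $\partial\Omega$; you correctly flag this and propose the $\Omega_j\subset\subset\Omega$ exhaustion with $|D\chi_E|(\partial\Omega_j)=0$, but note that when you restrict to $\Omega_j$ the level set $\{v_\e>t_\e\}\cap\Omega_j$ acquires boundary along $\partial\Omega_j$, and you must ensure the polyhedral approximant does not pick up spurious perimeter there. This is handled by choosing $\Omega_j$ so that $\partial A_\e$ is transversal to $\partial\Omega_j$ and the added perimeter is $O(\cH^{N-1}(\partial A_\e\cap(\Omega\setminus\Omega_j)))$, which vanishes as $j\to\infty$. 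With these adjustments your argument goes through.
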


In the sequel we denote by $\{\chi= 1\}$ the set of points in $\Omega$ with density $1$, and by  $\{\chi = 0\}$, the set of points in $\Omega$ with density $0$.
We recall (see \cite{AFP})that for every $t \in [0,1]$, the set
$E^t$ (set of all points where $E$ has density $t$) is defined by
$$
E^t := \left\{x \in \mathbb R^N: \lim_{\varrho \to 0}\frac{\cL^N\left(E \cap B_\varrho(x)\right)}{\cL^N(B_\varrho(x))}=t\right\}.
$$
 The essential boundary of $E$ is defined by $\partial^\ast E:=\mathbb R^N \setminus (E^0\cup E^1)$, and
$$
|D \chi|(\Omega)=P(E;\Omega)={\mathcal H}^{N-1}(\Omega \cap \partial^\ast E)={\mathcal H}^{N-1}(\Omega\cap E^{\frac{1}{2}}). 
$$

%\note{This is not a result for sets of finite perimeter, is a general criteria to verify that a set function is the restriction of a Radon measure...where to put it?}

\section{Statement of the problem and main results}\label{section:statement}
Let $\Omega$ be a bounded open subset of $\R{N}$ and consider continuous functions $W^i\colon \R{d{\times}N} \to[0,+\infty[$, $g^i_1\colon \R{d}\times \cS^{N-1} \to [0, +\infty[$, $i\in\{0,1\}$, and $g_2: \{0,1\}^2{\times}(\R d)^2{\times}\cS^{N-1} \to [0,+\infty[$ satisfying
\begin{enumerate}
\item[($H_1$)] \noindent there exists $c, C>0$ such that for $i\in\{0,1\}$ and $\zeta_1, \zeta_2 \in \R{d{\times}N},$
\begin{equation*}
|W^i(\zeta_1) - W^i (\zeta_2)| \leq C |\zeta_1- \zeta_2| ( 1 + |\zeta_1|^{p-1} + |\zeta_2|^{p-1}), \; \; \; \;  c|\zeta_1|^p \leq W^i(\zeta_1),
\end{equation*}
for some $p>1$;
\item[($H_2$)] there exist $c, C > 0$ such that for all $ \lambda \in \R{d}$, $\nu \in \cS^{N-1}$, and for $i\in\{0,1\},$
\begin{equation*}
c|\lambda| \leq g^i_1(\lambda, \nu)\leq C|\lambda|;
\end{equation*}
\item[($H_3$)] (positive homogeneity of degree one in the first variable) for all $t>0$, $\lambda \in \R{d}$, $\nu \in \cS^{N-1}$, and for $i\in\{0,1\}$,
\begin{equation*}
g^i_1(t\lambda, \nu) = tg^i_1(\lambda, \nu);
\end{equation*}
\item[($H_4$)] (subadditivity) for all $\lambda_1,\lambda_2 \in \R{d}$, $\nu \in \cS^{N-1}$, and for $i\in\{0,1\},$
\begin{equation*}
g^i_1(\lambda_1 + \lambda_2, \nu) \leq g^i_1(\lambda_1, \nu) + g^i_1(\lambda_2, \nu);
\end{equation*}
\item[($H_5$)] there exists $C>0$ such that for all $a,b\in\{0,1\}$, $c, d\in\R{d}$, and $\nu \in \cS^{N-1},$
\begin{equation*}
0 \leq g_2(a,b,c,d, \nu) \leq C(1+ |a - b|+|c-d|);
\end{equation*}
\item[($H_6$)] (mechanical consistency of the surface energy density) for all $a, b\in \{0,1\}$, $c,d \in \R{d}$, and $\nu \in \cS^{N-1},$
\begin{equation*}
g_2(a, b,c,d, \nu) = g_2(b,a,  d, c,-\nu);
\end{equation*}
\item[($H_7$)] there exists $C>0$ such that for all $a,b, \in \{0,1\}$, $c_i, d_i \in \R{d}$, $i=1, 2$, and $\nu \in \cS^{N-1}$,
\begin{equation*}
|g_2(a,b, c_1, d_1, \nu)- g_2(a,b, c_2, d_2, \nu)| \leq C \big||c_1-c_2|- |d_1-d_2|\big|\leq C|(c_1-d_1)-(c_2-d_2)|.
\end{equation*}
\end{enumerate}
Some comments on the hypotheses are in order.
Observe that if $g_2(a,b,c,d,\nu) = \tilde g_2(b-a, d-c, \nu)$, for some function $\tilde g_2$, then ($H_7$) corresponds to imposing Lipschitz continuity in the second variable for $\tilde g_2$.
In particular, this model includes densities of the type $g_2(a,b,c,d,\nu) = 1 + |d-c|$.

In the sequel we will use the following notation, for the sake of simplicity,
\begin{equation}\label{f}
f(\chi, \nabla u):= (1-\chi)W^0(\nabla u)+\chi W^1(\nabla u),
\end{equation}
and letting $g_1(i,\lambda,\nu):=g_1^i(\lambda,\nu)$ for every $\lambda\in\R d$, $\nu\in\cS^{N-1}$, and for $i\in \{0,1\}$, we can include all the surface energy densities in one single function $g\colon\{0,1\}^2{\times}(\R d)^2{\times}\cS^{N-1}\to[0,+\infty[$ by requiring %to denote the surface energy density encoding both $g^1_1, g^0_1$ and $g_2$, i.e. 
\begin{equation}\label{g}
\begin{split}
&g(0, 0, u^+, u^-, \nu)= g_1(0,[u],\nu)= g_1^0([u], \nu),\\  
&g(1, 1, u^+, u^-, \nu)= g_1(1,[u],\nu )=g_1^1([u],\nu),\\
&g(\chi^+,\chi^-, u^+, u^-, \nu)= g_2(\chi^+,\chi^-, u^+, u^-, \nu), \qquad\text{for $\chi^+ \neq \chi^-$.}%= g_2(u^+, u^-, \nu). \nonumber
\end{split}
\end{equation}
In what follows, we assume further that
\begin{equation}\label{800}
g_2(\cdot, \cdot, c, c, \cdot) = g_2(a,a,\cdot,\cdot, \cdot)=0,
\end{equation}
which is not a restriction, since $g_2$ is the density defined in $S(\chi)\cap S(u)$.
%\note{there is still something unclear: if one reads the what it means for $g$ when \eqref{800} holds, then the first two lines in \eqref{g} imply that $g_1=0$\ldots Where am I wrong?}
%This assumption covers the case where $g_2(\chi^+,\chi^-,u^+,u^-,\nu)={\mathcal H}^{N-1}(S(u)\cap S(\chi))$}

%Let $\alpha=0$ or $1$, $\theta \in \R^d$ and $\nu \in S^{N-1}$, with an abuse of notations in the following we will denote either $g^1_1(\lambda, \nu)$ or $g^0_1(\lambda, \nu)$ by $g_1(\lambda, \alpha, \nu)$ according to the cases $\alpha=1$ or $\alpha =0$.

\begin{remark}\label{98}
We remark the following facts:
\begin{itemize}
\item From condition $(H_1)$, it easily follows that there exists $C > 0$ such that, for all $\zeta \in \R{d{\times}N}$ and $i \in \{0,1\}$,
$$  W^i(\zeta) \leq C( 1 + |\zeta|^p).$$
The coercivity condition on the energies $W^i$ is not physically meaningful, since the Helmholtz free energy associated with crystals may have potential wells (at matrices where the energy vanishes). 
However, it can be dropped following arguments in \cite[Proof of Prop.\@ 2.22, Step 2]{CF} that are now standard: one considers a sequence of energies $W_\eps^i(\zeta):=W^i(\zeta)+\eps|\zeta|^p$ and recovers the results for $W^i$ by letting $\eps\to0$.
\item Conditions $(H_2)$ and $(H_3)$ may also rule out some important physical settings, but they can be relaxed following arguments in \cite{CF}: the coercivity condition $(H_2)$ can be weakened by asking that the admissible sequences have bounded total variation (see \cite[pages 76 and 77 ]{CF}); the homogeneity condition $(H_3)$ can be relaxed to sublinearity $g^i_1(t\lambda, \nu) \leq tg^i_1(\lambda, \nu)$ (see \cite[last paragraph of Section 3, on page 78]{CF}).

 \item We will extend by homogeneity the functions $g_1^i$, $i\in\{0,1\}$ to all of $\R{N}$ in the second variable.
Let $\xi\in\R{N}$, then $g_1^i(\cdot,\xi):=|\xi|g_1^i(\cdot,\xi/|\xi|)$;
\item We could replace the subadditivity assumption ($H_4$) by assuming Lipschitz continuity in the first variable.
\item Without any extra difficulty one could replace $f$ in \eqref{f} by $f:T\times \R{d{\times}N}\to[0,+\infty[$, where $T$ is a set of finite cardinality of $\mathbb R^m$. We also believe that a similar analysis to the one presented below, can be performed when the range of $\chi$ is countable.
Such a case is considered, e.g., in \cite{CZ};
%\begin{itemize}

\end{itemize}
\end{remark}
%\item \note{ I think it is natural to assume that $g_2(a,a, \nu) = 0$, which doesn't seem to follow from $(H_5)$.}

%\textcolor{blue}{Do we need it somewhere? In the initial energy even if $g_2(i,i,\nu)\not = 0$, still our energy does not see it because we are outside the jump set which is the integration set. And also why are we interested to the value of $g_2$ when $i=0$?}
%\note{ Notice that in this simplified model, I singled out $|D\chi_n|(A)$ from  $ \int_{S(u_n)\cap S(\chi_n)} g_2(u_n^+,u_n^-,\nu(u_n))\,\de\cH^{N-1}$. That is,  $g_2$ corresponds to $g(a,b, \nu) - \frac{1}{2}( 1 + |a-b|)$ in \cite{CZ}.  In imposing that $g_2(a, a, \nu) = 0$, I just wanted a condition that would allow for no contribution from $g_2$ in the formula for $\gamma_{\tiny{\text{od-sd}}}$ (beginning of page $4$) when there is no jump in $u$. I hope this explanation is clear enough...}
%\end{remark}

%Let $E$ be a set of finite perimeter in $\Omega$ and denote by $\chi$ its characteristic function. Consider the energy which models structured deformations for a sample composed by two constituents and with a penalisation of the interface between the materials 
%\begin{equation}
%\label{energyodsd}
%\begin{split}
%F_{\text{od-sd}}(\chi,u):= & \int_\Omega \left(\chi W_1(\nabla u) + (1-\chi)W_0(\nabla u)\right)\,\de x \\
%&+ \int_{\Omega\cap \{ \chi = 1\}\cap S(u)} g^1_1([u],\nu(u))\,\de\cH^{N-1} + \int_{\Omega\cap \{ \chi = 0\}\cap S(u)}g^0_1([u],\nu(u))\,\de\cH^{N-1}\\
%& + |D\chi|(\Omega) + \int_{\Omega \cap S(u)\cap S(\chi)} g_2(\chi^+,\chi^-,u^+,u^-,\nu(u))\,\de\cH^{N-1}.
%\end{split}
%\end{equation}

The following remark motivates the convergences in the definition of ${\mathcal F}_{\odsd}$.
\begin{remark}[Compactness]
\label{compactness}
Assume that we have a sequence $(\chi_n, u_n)\in BV(\Omega;\{0,1\}){\times}SBV(\Omega;\R d)$ such that $u_n$ is bounded in $L^1$ and the energies 
$F_{\odsd}(\chi_n,u_n)$ are bounded. 
Then the growth assumptions $(H_1), (H_2)$, and $(H_5)$ entail that $\|\nabla u_n\|_{L^p(\Omega;\R{d{\times}N})}\leq C$, $|D u_n|(\Omega)\leq C$ and so there exist $\chi \in BV(\Omega;\{0,1\})$, $u \in SBV(\Omega;\R d)$, and $G \in L^1(\Omega;\R {d \times N})$ such that
\begin{equation*}
%\label{conv}
\begin{split}
&\chi_n \overset{\ast}{\rightharpoonup} \chi \hbox{ in } BV(\Omega;\{0,1\}), \\
&u_n \to u \hbox{ in }L^1(\Omega;\R d),\\
& \nabla u_n \wto G \hbox{ in }L^p(\Omega;\R {d\times N}).
\end{split}
\end{equation*}
\end{remark}

In order to prove our main result using the blow-up method from \cite{FM}, we address the problem of finding an integral representation for the localized functional. Given $U\in\cO(\Omega)$ and for $f$ and $g$ satisfying \eqref{f}, \eqref{g}, and \eqref{800}, define
%find an integral representation for the (localised) functional:
%\begin{equation}\label{304}
%\begin{split}
%\cF_{\odsd}(\chi,u,G;A):=  \inf_{(u_n, \chi_n)}\bigg\{ & \liminf_n \int_A %\left((1-\chi_n)W^0(\nabla u_n)+ \chi_nW^1(\nabla u_n) + \right)\de x \\
%&+ \int_{A\cap \{ \chi_n = 0\}\cap S(u_n)} %g^0_1([u_n],\nu(u_n))\,\de\cH^{N-1} \\
%& + \int_{A\cap \{ \chi_n = 1\}\cap %S(u_n)}g^1_1([u_n],\nu(u_n))\,\de\cH^{N-1}\\
%& +  \int_{ A \cap S(\chi_n)\cap S(u_n)} %g_2(\chi_n^+,\chi_n^-,u_n^+,u_n^-,\nu(u_n))\,\de\cH^{N-1}+ |D\chi_n|(A):\\
%&\chi_n\in BV(A; \{0,1\}), u_n\in SBV(A;\R d),   \chi_n\wsto \chi \; %\text{in}\; BV(A; \{0,1\}),  \\
%& u_n\to u \text{ in } L^1(A;\R d) ,\nabla u_n\wto G \hbox{ in }L^p(A;\R %{d\times N})\bigg\}.
%\end{split}
%\end{equation}

%If we use the notation introduced in \eqref{g} to denote the surface energy density encoding both $g^1_1, g^0_1$ and $g_2$, and
 %$f(\chi, u):= \chi W_1(\nabla u)+ (1-\chi)W_0(\nabla u)$, then we can %write \eqref{304} simply as:
\begin{equation*}%\label{304}
\begin{split}
\cF_{\text{od-sd}}(\chi,u,G;U):= & \inf_{(u_n, \chi_n)}\Bigg\{\liminf_n \int_U f(\chi_n, \nabla u_n) \,\de x \\
&+ \int_{U \cap S(u_n, \chi_n)} g(\chi_n^+, \chi_n^-, u_n^+, u_n^-,  \nu(\chi_n, u_n))\,\de\cH^{N-1} + |D\chi_n|(U):\\
&  \chi_n\in BV(U; \{0,1\}), u_n\in SBV(U;\R d),  \chi_n\wsto \chi \; \text{in}\; BV(U; \{0,1\}),  \\
& u_n\to u \text{ in } L^1(U;\R d) ,\nabla u_n\wto G \hbox{ in }L^p(U;\R {d\times N}) \; \Bigg\}.
\end{split}
\end{equation*}
Let $\mathcal{F}_{\odsd}(\chi,u,G)$ denote $\mathcal{F}_{\odsd}(\chi,u,G; \Omega)$.
Then, our main theorem reads as follows
\begin{theorem}\label{312}
Let $ \chi \in BV(\Omega; \{0,1\}), \; u \in SBV(\Omega; \R d)$, and $G \in L^1(\Omega; \R{d{\times}N})$. 
Let $\mathcal{F}_{\odsd}$ be defined by \eqref{304bis} for functions $W^i$, $g_1^i$, $i\in\{0,1\}$ and $g_2$ satisfying $(H_1)$-$(H_7)$, for some $p > 1$. 
Then $\mathcal{F}_{\odsd}(\chi, u, G)$ admits an integral representation of the form
\begin{equation}\label{intrep}
\mathcal{F}_{\odsd}(\chi,u,G)= \int_\Omega H ( \chi, \nabla u, G)\, \de x + \int_{\Omega \cap S(\chi, u)} \gamma( \chi^+, \chi^-, u^+, u^-, \nu)\, \de \cH^{N-1}
\end{equation}

%%The main theorem \ref{312} provides \eqref{intrep}, i.e. $$
%\mathcal{F}_{\odsd}(\chi,u,G)= \int_\Omega H ( \chi, \nabla u, G)\, \de x + \int_{\Omega \cap S(\chi, u)} \gamma( \chi^+, \chi^-, u^+, u^-, \nu)\, \de \cH^{N-1},
%$$
%&&+ \int_{S(u) \cap S(\chi))} \hspace{-0.3cm}\gamma_{\tiny{\text{od-sd}}}(u^+, u^-, [\chi], \nu) \, \de \cH^{N-1}\\\\
%&& + \int_{(S(\chi) \setminus S(u))}\hspace{-0.3cm}\gamma_{\tiny{\text{od}}}([\chi], \nu)\, \de \cH^{N-1}\\\\
where, for $i\in \{0,1\}$ and $A, B \in \R{d{\times}N}$, 
\begin{equation}\label{Hsd}
\begin{split}
H (i, A, B) =\inf_u \bigg\{ & \int_Q W^i(\nabla u)\, \de x + \int_{Q \cap S(u) } g^i_1( [u], \nu(u)) \, \de \cH^{N-1}: \\
&  u \in SBV( Q; \R{d}), \, |\nabla u| \in L^p(Q), \, u|_{\partial Q} = Ax, \, \int_Q \nabla u\,\de x = B \bigg\},
\end{split}
\end{equation}
%\textcolor{red}{Unify notations below for surface densities afterwards and make them consistent with others.}
%\begin{eqnarray*} \gamma_{\text{sd}}(\chi_i,\lambda,\nu) &=& \inf \left\{ \int_{Q_\nu\cap S(v)} g^i_1 ([v], \nu(v))\, \de \cH^{N-1} , \; v \in \mathcal{A}_{\tiny{\text{sd}}}(\lambda, \nu) \right\}\\\
%\end{eqnarray*}
%for $\chi_i=i$ and $i= 0$ or $1$, $\lambda \in\R^d$ and $\nu \in \cS^{N-1}$, with
%\begin{eqnarray*} \mathcal{A}_{\tiny{\text{sd}}}( \lambda, \nu) & :=& \left \{ v \in   SBV(Q_\nu; \R{d}):  v \lfloor _{\partial Q_\nu} = v_{\lambda, \nu}, \, \nabla v = 0 \; \mathcal{L}^N \, \text{a.e.}\right\}\\
%\end{eqnarray*}
%$$v_{\lambda, \nu} := \left\{ \begin{array}{ll} \lambda & \text{if}\, \, y\cdot \nu > 0\\
%0 & \text{if}\,\, y\cdot \nu \leq 0.
%\end{array}
%\right.
%$$
and, for $a, b \in \{0,1\}$, $c, d \in \R{d}$, $\nu \in \cS^{N-1}$,
\begin{equation}\label{gamma} 
\begin{split}
\gamma( a, b, c, d, \nu) := \inf \bigg\{  & \int_{Q_\nu \cap \{ \chi = 0\} \cap S(u)} g^0_1([u], \nu(u))\, \de \cH^{N-1} + \int_{Q_\nu \cap \{ \chi = 1\} \cap S(u)} g_1^1([u], \nu(u)) \, \de \cH^{N-1} \\
& + \int_{Q_\nu  \cap S(\chi)\cap S(u)}  g_2(\chi^+, \chi^-,  u^+, u^-, \nu(u))\, \de \cH^{N-1} +|D \chi|(Q_\nu) : \\
& (\chi,u) \in \mathcal{A}_{\odsd}(a,b, c, d,\nu)\bigg\}
\end{split}
\end{equation}
where
%where, for $a, b \in \R{m}, \nu \in \cS^{N-1},$
\begin{equation*}
\begin{split}
\mathcal{A}_{\odsd}(a,b,c, d, \nu) := \{ & (\chi, u) \in BV(Q_\nu; \{0,1\}){\times}SBV(Q_\nu; \R{d}): \\
& \chi|_{\partial Q_\nu} =\chi_{a,b,\nu}, \;u|_{\partial Q_\nu} = u_{c,d, \nu}, \;\nabla u = 0 \;\mathcal{L}^N\text{-a.e.} \;\}
\end{split}
\end{equation*}
with 
$$ \chi_{a,b, \nu}(x) :=\begin{cases} 
	a & \text{if } x\cdot \nu > 0, \\
	b & \text{if } x\cdot \nu \leq 0,
	\end{cases}
\qquad\text{and}\quad 
u_{c,d, \nu}(x) := \begin{cases} 
	c & \text{if } x\cdot \nu > 0, \\
	d & \text{if } x\cdot \nu \leq 0.
	\end{cases}
$$
%and for $a,b \in \{0,1\}$ and $\nu \in \cS^{N-1}$
%\begin{eqnarray*} \gamma_{\tiny{\text{od}}}(c, d,\nu) &=& \inf \left\{ |D\chi|(Q_\nu) , \; \chi \in \mathcal{A}_{\tiny{\text{od}}}(c,d, \nu) \right\}=|(d-c)\otimes \nu|
%\end{eqnarray*}
%with
%\begin{eqnarray*} \mathcal{A}_{\tiny{\text{od}}}( c,d, \nu) & :=& \left \{ \chi \in   BV(Q_\nu; \{0,1\}):  \chi \lfloor _{\partial Q_\nu} = \chi_{c,d, \nu} \right\}\\
%\end{eqnarray*}
% for $\chi_{c,d, \nu}$ defined above. Clearly $\gamma_{\tiny{\text{od}}}=1$ whenever $c\not= d$.

\end{theorem}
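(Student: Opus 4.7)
The plan is to follow the blow-up strategy of \cite{FM} (as adapted to structured deformations in \cite{CF} and to optimal design in \cite{CZ}): extend $U\mapsto \cF_{\odsd}(\chi,u,G;U)$ to a finite Radon measure, compute its Radon--Nikod\'ym derivatives with respect to $\cL^N$ and to $\cH^{N-1}\res S(\chi,u)$, and match them against $H$ and $\gamma$ via two-sided blow-up estimates.

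\textbf{Step 1 (measure property).} The first task is to verify that $U\mapsto \cF_{\odsd}(\chi,u,G;U)$ satisfies the hypotheses of Lemma~\ref{FMaly}. Nested subadditivity is the classical De Giorgi cut-off/gluing argument: given almost-optimal sequences $(\chi_n^1,u_n^1)$ on $V$ and $(\chi_n^2,u_n^2)$ on $U\setminus\overline{W}$, slice the annulus $V\setminus\overline{W}$ into finitely many layers and select, via a pigeonhole argument, a layer on which the spliced pair has an interfacial mismatch controlled by $(H_2)$, $(H_5)$ and the perimeter penalization $|D\chi_n|(V)$. Inner regularity and the domination by a Radon measure $\mu$ of the form $H(\chi,\nabla u,G)\,\cL^N+\gamma(\cdots)\,\cH^{N-1}\res S(\chi,u)$ will follow as soon as the upper bounds in Step~2 are in place.

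\textbf{Step 2 (upper bounds).} At an $\cL^N$-a.e.\ point $x_0$ where $\chi(x_0)\in\{0,1\}$ is attained, I fix an almost-optimal test field $v$ in the definition of $H(\chi(x_0),\nabla u(x_0),G(x_0))$, periodically extend $v-\nabla u(x_0)\cdot y$, rescale on small cubes $Q(x_0,\delta)$, and diagonalize to obtain a recovery sequence for $\cF_{\odsd}(\chi,u,G;Q(x_0,\delta))\le\delta^N(H+o(1))$. At an $\cH^{N-1}$-a.e.\ point $x_0\in S(\chi,u)$, I take an almost-optimal pair in $\cA_{\odsd}(\chi^+(x_0),\chi^-(x_0),u^+(x_0),u^-(x_0),\nu(x_0))$, rescale on oriented cubes $Q_{\nu(x_0)}(x_0,\delta)$, and match the given traces through a thin boundary layer whose extra cost is estimated by $(H_2)$, $(H_5)$ and $(H_7)$. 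The zero-gradient constraint $\nabla u=0$ in $\cA_{\odsd}$ is reconciled with the macroscopic gradient $G$ via a Lusin-type construction (Theorem~\ref{Al} and Remark~\ref{Alf}) that absorbs the bulk correction into a controlled surface contribution.

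\textbf{Step 3 (lower bounds via blow-up).} Take an almost-minimizing sequence and let $\lambda_n$ be the associated energy measures; after extraction, $\lambda_n\wsto\lambda\ge \cF_{\odsd}(\chi,u,G;\cdot)$ on open sets. Decompose $\lambda = \lambda^{a}\cL^N + \lambda^{j}\cH^{N-1}\res S(\chi,u) + \lambda^{s}$, and eliminate $\lambda^{s}$ by a standard density argument using $(H_1), (H_2), (H_5)$. For $\lambda^{a}$ I blow up on $Q(x_0,\delta)$ at $\cL^N$-a.e.\ $x_0$, using Proposition~\ref{thm2.3BBBF}(i) to justify $u_n(x_0+\delta\,\cdot)\to u(x_0)+\nabla u(x_0)\cdot y$ and $\chi_n\to\chi(x_0)$ in $L^1(Q)$; the rescaled pair, after a boundary-layer trace correction, becomes admissible in the definition of $H(\chi(x_0),\nabla u(x_0),G(x_0))$. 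For $\lambda^{j}$ I blow up on $Q_{\nu(x_0)}(x_0,\delta)$ at $\cH^{N-1}$-a.e.\ jump point: the rescaling $v_n(y):=u_n(x_0+\delta y)$ produces $\nabla v_n\to 0$ in $L^p$ (matching the constraint in $\cA_{\odsd}$), the traces converge to $\chi_{\chi^\pm,\nu},\,u_{u^\pm,\nu}$ by Proposition~\ref{thm2.3BBBF}(ii), and a further boundary-layer correction exploiting $(H_7)$ yields $\lambda^{j}(x_0)\ge \gamma(\chi^+,\chi^-,u^+,u^-,\nu)$.

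\textbf{Main obstacle.} I expect the delicate step to be the lower bound at jump points, where the rescaled bulk gradients formally vanish but still carry non-trivial energy that can redistribute along the rescaled jump sets of $u_n$ and $\chi_n$. Discarding the bulk term on the boundary layers is not admissible; instead, one must use an Alberti-type Lusin replacement (Theorem~\ref{Al}) to transfer small bulk perturbations into a surface contribution of order $o(\delta^{N-1})$, and simultaneously adjust the traces of both $\chi$ and $u$ without spoiling the perimeter bookkeeping. Hypothesis $(H_7)$ is essential at this stage, since the mismatch $|(c_1-d_1)-(c_2-d_2)|$ is exactly the quantity produced by such a trace adjustment, and it is what allows the blow-up to close against $\gamma$.
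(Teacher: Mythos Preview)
Your overall strategy matches the paper's: localize via Fonseca--Mal\'y, then blow up with respect to $\cL^N$ and $\cH^{N-1}\res S(\chi,u)$ to obtain matching upper and lower bounds for $H$ and $\gamma$. A few points of divergence and omission are worth flagging.

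\emph{Sequential characterizations.} Where you propose direct boundary-layer trace corrections to feed blow-up sequences into the cell formulas for $H$ and $\gamma$, the paper instead proves auxiliary lemmas (Lemmas~\ref{376} and~\ref{680bis}) showing $H=\tilde H$ and $\gamma=\tilde\gamma$, where $\tilde H,\tilde\gamma$ are sequential (asymptotic) infima with no boundary conditions. This lets the lower bound close without any trace surgery; the Alberti replacement you anticipate is used inside the proof of Lemma~\ref{680bis}, not in the blow-up itself.

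\emph{$L^\infty$ reduction.} You do not mention truncation. The paper first proves the representation for $u\in SBV\cap L^\infty$ (using Lemma~\ref{385}, a truncation lemma showing one may restrict to uniformly bounded competitors), and only then removes the $L^\infty$ constraint via a separate argument based on the growth bounds \eqref{422CF}, \eqref{gammagrowth}, \eqref{gammasdgrowth}. Without this your recovery constructions in Step~2 are not obviously under control.

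\emph{Interfacial upper bound.} Your Step~2 understates what is needed: the paper reduces (via the continuity properties of $\gamma$ in Lemma~\ref{Lipgammaodsd} and an Ambrosio--Mortola--Tortorelli-type argument) to the case $(\chi,u)=(a\chi_E+b\chi_{\Omega\setminus E},\,c\chi_E+d\chi_{\Omega\setminus E})$, then first treats $E$ polyhedral and finally passes to general sets of finite perimeter via Lemma~\ref{polyhedra}. This reduction is not automatic.

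\emph{Measure property.} Your plan to derive domination from the Step~2 upper bounds risks circularity. The paper obtains the Radon-measure structure \emph{before} any blow-up, using the crude estimate $\cF_{\odsd}(\chi,u,G;U)\le C(\cL^N(U)+\|G\|_{L^p(U)}+|Du|(U)+|D\chi|(U))$ (built from Theorem~\ref{Al} and Lemma~\ref{ctap}) together with the slicing/co-area argument for nested subadditivity.

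\emph{Bulk lower bound.} The step you do not mention, and which carries real weight in the paper, is freezing $\chi$ at $\chi(x_0)$: one chooses radii so that $\cH^{N-1}(S(\chi_n)\cap Q(x_0,\delta_k))/\delta_k^N\to0$, invokes the Chacon biting lemma for $p$-equi-integrability of $\nabla u_k$, and then redefines the sequence on $\{\chi_k\neq\chi(x_0)\}$ by the affine target. This is what decouples the bulk density so that only $W^{\chi(x_0)}$ and $g_1^{\chi(x_0)}$ survive in the limit.
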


%\end{theorem}

\begin{remark} \label{remgammaodsd}
The formula for $H$ only sees each component separately, so for $i \in \{0,1\}$ and $A,B\in \R{d{\times}N}$,  $H(i, A, B) = H^i_{\sd}(A,B)$, where the latter is the bulk energy density given by formula $(2.16)$ in \cite{CF}.
In particular, arguing as in \cite{CF} (see formula (4.22) therein), for $i \in \{0, 1\}$ and $A, B \in \R {d\times N}$,
\begin{equation}
\label{422CF}
H(i, A, B)  \leq C(1+|A|+|B|^p),
\end{equation}
which will be used in the sequel.
The formula for $ \gamma$ can be specialized giving rise to 3 cases:
\begin{itemize} 
\item in $S(\chi)\cap S(u)$ we have in fact the formula in its full generality, and it could be denoted as $\gamma_{\odsd}$ as it fully reflects both the optimal design and structured deformation effects.
In particular $(H_2)$ and $(H_5)$ entail that for $ a, b \in \{0,1\}, c, d, \in \R d, \nu \in \cS^{N-1}$,  
\begin{equation}
\label{gammagrowth}\gamma(a, b, c, d, \nu) \leq C( 1 + |d-c|);
\end{equation}
\item in $S(u)\setminus S(\chi)$ the formula for $\gamma$ reads as
\begin{equation*} 
\gamma_{\sd}(i,\lambda,\nu):=\inf \bigg\{ \int_{Q_\nu\cap S(v)} g^i_1 ([v], \nu(v))\, \de \cH^{N-1} , \; v \in \mathcal{A}_{\tiny{\text{sd}}}(\lambda, \nu) \bigg\}
\end{equation*}
for  $i\in\{0,1\}$, $\lambda \in\R d$ and $\nu \in \cS^{N-1}$, with
\begin{equation*} 
\mathcal{A}_{\sd}( \lambda, \nu):=\{ v \in SBV(Q_\nu; \R d):  v|_{\partial Q_\nu} = v_{\lambda, \nu}, \, \nabla v = 0 \; \mathcal{L}^N\text{-a.e.}\}
\end{equation*}
and
\begin{equation}
\label{vlambdanu}
v_{\lambda, \nu} := \begin{cases} 
	\lambda & \text{if } x\cdot \nu > 0,\\
	0 & \text{if } x\cdot \nu \leq 0.
\end{cases}
\end{equation}
The symbol $\gamma_{\sd}$ is adopted to underline that it is similar to the formula for the interfacial energy density in \cite{CF} and is due only to structured deformations.
In fact, for every  $a,b \in \{0,1\}$, $c,d \in \R d$, $\nu \in S^{N-1}$, and for $i\in\{0,1\}$, we have
\begin{equation*}
\begin{split}
\gamma(a,b,c,d,\nu) \geq & \inf \bigg\{ \int_{Q_\nu \cap \{ \chi = i\} \cap S(u)} g_1^i([u], \nu(u))\, \de \cH^{N-1}: (\chi,u) \in \mathcal{A}_{\odsd}(a,b, c, d,\nu)\bigg\} \\
= &\inf \bigg\{\int_{Q_\nu \cap S(u)} g_1^i([u], \nu(u))\, \de \cH^{N-1}: u \in {\mathcal A}_{\sd}(\lambda, \nu)\bigg\}
\end{split}
\end{equation*}
where $c-d=\lambda$. In order to prove the latter equality one can argue as in the proof of \eqref{673} below, where we show that the contribution of $g_1^i$ on $Q_\nu \cap \{\chi = j\} \cap S(u)$ with $j \in \{0,1\}, j \neq i$ is negligible.
On the other hand, when $a=b=i$, the function $\chi \equiv a$ is admissible for the definition of  $\gamma$, hence  we can conclude that 
$$
\gamma(i,i,c, d,\nu)= \gamma_{\sd}(i,\lambda,\nu).
$$
On the other hand $(H_2)$ and $(H_5)$ entail that 
there exists a constant $C>0$ such that, for every $\lambda \in \R d, \nu \in \cS^{N-1},$ 
\begin{equation}
\label{gammasdgrowth}
\gamma_{\sd}(\lambda, \nu)  \leq C|\lambda|.
\end{equation}
\item in $S(\chi)\setminus S(u)$ the formula for $\gamma$ reads
\begin{equation*} 
\gamma_{\od}(a,b,\nu) := \inf \{ |D\chi|(Q_\nu) , \; \chi \in \mathcal{A}_{\tiny{\text{od}}}(a,b, \nu)\}=|(b-a)\otimes \nu|=1
\end{equation*}
for $a,b \in \{0,1\}$ and $\nu \in \cS^{N-1}$, with
\begin{equation*} 
\mathcal{A}_{\od}( a,b, \nu):= \{ \chi \in   BV(Q_\nu; \{0,1\}):  \chi|_{\partial Q_\nu} = \chi_{a,b, \nu}= 1 \}.
\end{equation*}
We use the symbol $\gamma_{\od}$ in order to emphasize that it only reflects the optimal design setting (\cite{CZ}).
%for $\chi_{a,b, \nu}$ defined above. Clearly $\gamma_{\tiny{\text{od}}}=1$ whenever $a\not= b$.
%$ \gamma_{\text{sd}}$,  and $ \gamma_{\tiny{\text{od}}} $.
In fact, for every $c,d \in \R d$ we have
$$\gamma(a,b, c,d,\nu)\geq \inf \left\{ |D\chi|(Q_\nu) , \; \chi \in \mathcal{A}_{\tiny{\text{od}}}(a,b, \nu) \right\}\geq |(b-a)\otimes \nu|.$$
On the other hand, if $c=d$, then the function $u \equiv c$ is admissible for the definition of $\gamma( a,b,c,d, \nu)$ and we have
$$
\gamma( a,b,c,c,\nu)= \gamma_{\text{od}}(a,b, \nu)=|(b-a)\otimes \nu|,
$$
thus we can conclude that on $S(\chi)\setminus S(u)$, the surface term reduces to the perimeter  of $S(\chi)$.
\end{itemize}
\end{remark}

\section{Auxiliary Results}\label{section:auxiliary}

The following result  can be proven following arguments analogous to \cite[Proposition 3.1]{CF} and \cite[Lemma 2.20]{CF}.

\begin{lemma}\label{376}
Let $i \in \{0,1\}$ and $A$, $B \in \R{d{\times}N}$, and define
\begin{equation}\label{Hsdtilde}
\begin{split}
\tilde{H} (i, A, B) =& \inf_{u_n} \liminf_n  \bigg\{ \int_Q W^i( \nabla u_n)\, \de x + \int_{Q \cap S(u_n) } g^i_1( [u_n], \nu(u_n)) \, \de \cH^{N-1}:  \\
& u_n \in SBV( Q; \R d), \, |\nabla u_n| \in L^p(Q), \, u_n \to  Ax \, \text{in} \; L^1,  \nabla u_n \wto B \text{ in } L^p(Q;\R d) \bigg\}.
\end{split}
\end{equation}
Under the assumptions $(H_1) - (H_4)$ it results that $\tilde{H}=H$, where $H$ is the density defined in \eqref{Hsd}.
\end{lemma}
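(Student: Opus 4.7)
The plan is to establish $\tilde H = H$ through the two inequalities $\tilde H \leq H$ and $H \leq \tilde H$ separately. For the first, I would fix $u \in SBV(Q; \R{d})$ admissible for $H(i,A,B)$, set $v(x) := u(x) - Ax$ (so that $v|_{\partial Q} = 0$ in the trace sense and $\int_Q \nabla v\,\de x = B - A$), and extend $v$ to a $Q$-periodic function on $\R{N}$; the vanishing of $v$ on $\partial Q$ ensures that no spurious jumps are introduced across cell boundaries. The rescaled sequence $u_n(x) := Ax + n^{-1} v(nx)$ satisfies $u_n \to Ax$ in $L^1$ and, by Riemann--Lebesgue, $\nabla u_n(x) = A + \nabla v(nx) \wto A + \int_Q \nabla v\,\de x = B$ in $L^p$. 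Changing variables $y = nx$, exploiting $Q$-periodicity, and using the $1$-homogeneity $(H_3)$ one verifies
\begin{equation*}
\int_Q W^i(\nabla u_n)\,\de x + \int_{Q \cap S(u_n)} g_1^i([u_n], \nu(u_n))\,\de \cH^{N-1} = \int_Q W^i(\nabla u)\,\de x + \int_{Q \cap S(u)} g_1^i([u], \nu(u))\,\de \cH^{N-1},
\end{equation*}
so $u_n$ is admissible for $\tilde H(i,A,B)$ with the same energy as $u$; passing to the infimum over $u$ yields $\tilde H \leq H$.

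For the reverse inequality I would take a near-optimal admissible sequence $(u_n)$ for $\tilde H(i,A,B)$ and modify it into a competitor $\tilde u_n$ for $H(i,A,B)$ with energy differing by $o(1)$, in two stages paralleling \cite[Prop.~3.1]{CF} and \cite[Lemma~2.20]{CF}. First, a boundary fit: for a large integer $M$, introduce cut-offs $\varphi_k$, $k=1,\dots,M$, concentrated on $M$ nested annular layers near $\partial Q$ and set $u_n^k := \varphi_k u_n + (1-\varphi_k) Ax$. The $p$-growth in $(H_1)$ combined with $u_n \to Ax$ in $L^1$ drives the bulk error on the transition layer to $0$ as $n \to \infty$, while averaging over $k$ and using $(H_2)$ selects an index $k(n)$ for which the extra surface energy produced on the layer is $O(1/M)$. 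A diagonal extraction in $(n, M)$ then yields $\bar u_n$ with $\bar u_n \to Ax$ in $L^1$, $\nabla \bar u_n \wto B$ in $L^p$, $\bar u_n \equiv Ax$ on a collar of $\partial Q$, and energy at most that of $u_n$ up to $o(1)$.

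Second, a gradient correction: set $\eta_n := B - \int_Q \nabla \bar u_n\,\de x \to 0$, choose a cube $Q' \subset\subset Q$ containing $\{\bar u_n \neq Ax\}$, and apply Alberti's Theorem~\ref{Al} on $Q'$ to the constant field $\eta_n/\cL^N(Q')$ to obtain $w_n \in SBV(Q'; \R{d})$ with $\nabla w_n \equiv \eta_n/\cL^N(Q')$ a.e., $\int_{S(w_n)} |[w_n]|\,\de\cH^{N-1} \leq C|\eta_n|$, and (by Remark~\ref{Alf}) $\|w_n\|_{L^1} \leq C|\eta_n|$. Extending $w_n$ by zero to $Q$ adds at most a jump of $\cH^{N-1}$-mass $\leq C|\eta_n|$ across $\partial Q'$ (controlled by the $BV$ trace theorem), and setting $\tilde u_n := \bar u_n + w_n$ gives a function with $\tilde u_n|_{\partial Q} = Ax$ and $\int_Q \nabla \tilde u_n\,\de x = B$. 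The Lipschitz bound in $(H_1)$, together with $|\nabla w_n| \leq C|\eta_n|$ and the $L^p$-boundedness of $\nabla \bar u_n$, controls the bulk perturbation by $C|\eta_n|(1 + \|\nabla \bar u_n\|_{L^p}^{p-1}) \to 0$, while $(H_2)$ and the Alberti/trace estimates control the surface perturbation by $C|\eta_n| \to 0$. Hence $\tilde u_n$ is admissible for $H(i,A,B)$ with energies converging to $\tilde H(i,A,B)$, which yields $H \leq \tilde H$. The main obstacle will be coordinating the two steps: the gradient correction has to be confined strictly inside $Q$ so as not to destroy the boundary condition from the first step, and the single small parameter $|\eta_n|$ must simultaneously absorb both the bulk Lipschitz/$p$-growth perturbation and the additional surface jumps Alberti's construction introduces.
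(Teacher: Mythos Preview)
Your overall strategy matches the paper's (which simply refers to \cite[Prop.~3.1 and Lemma~2.20]{CF}): periodic rescaling for $\tilde H\le H$, and a boundary fit plus gradient correction for $H\le\tilde H$. The direction $\tilde H\le H$ and your Step~2 (gradient correction via Theorem~\ref{Al}, using $(H_1)$ for the bulk and $(H_2)$, $(H_4)$ for the surface) are fine.

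There is, however, a genuine gap in your Step~1. With smooth cut-offs $\varphi_k$ you have
\[
\nabla u_n^k=A+\varphi_k(\nabla u_n-A)+(u_n-Ax)\otimes\nabla\varphi_k,
\]
and the $p$-growth in $(H_1)$ produces a bulk error on the $k$-th layer of order $\int_{\text{layer}_k}|\nabla\varphi_k|^p|u_n-Ax|^p$. After averaging over the $M$ layers this is $\sim M^{p-1}\int_{\text{annulus}}|u_n-Ax|^p$, and you only know $u_n\to Ax$ in $L^1$; for $p$ large this term is not controlled, regardless of the order in which you send $n\to\infty$ and $M\to\infty$. This is precisely why the paper first records (via \cite[Lemma~2.20]{CF}) that $\tilde H$ may be computed with $\sup_n\|u_n\|_{L^\infty}<\infty$: after that truncation, $u_n\to Ax$ in every $L^q$, $q<\infty$, and your cut-off argument goes through. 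Alternatively, you can avoid the truncation by using a \emph{sharp} transition instead of smooth $\varphi_k$: set $\bar u_n:=u_n$ in a subcube $Q_r$ and $\bar u_n:=Ax$ in $Q\setminus Q_r$. This creates no extra gradient term, only a jump along $\partial Q_r$ whose surface energy is, by $(H_2)$, at most $C\int_{\partial Q_r}|u_n-Ax|\,\de\cH^{N-1}$; a Fubini/co-area slicing in $r$ then selects a good radius with this quantity $\to0$ from $L^1$ convergence alone. Either fix is short, but one of them is needed.
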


In fact, by \cite[Lemma 2.20]{CF}, $(H_1)$, and  \eqref{Hsdtilde}, we have that 

\begin{equation*}%\label{Hsdtilde2}
\begin{split}
\tilde{H}(i, A, B) = \inf_{u_n}\liminf_{n\to\infty} \bigg\{ & \int_Q W^i (\nabla u_n) \de x + \int_{Q \cap S(u)} g^i_1( [u_n], \nu(u_n))\de \cH^{N-1}: \\
& u_n \in SBV( Q; \R d),u_n \to A x \hbox{ in }L^1(Q;\R d), \, \sup|u_n|_{L^\infty(Q;\R d)}<+\infty, \\
&  |\nabla u_n| \in L^p(Q;\R d),\, \nabla u_n \wto B  \hbox{ in }L^p(Q;\R d)\bigg\},
\end{split}
\end{equation*}
for $i\in \{0,1\}, A, B \in \R{d{\times}N}.$

On the other hand, exploiting the same arguments in \cite[Proposition 3.1]{CF} we have
\begin{equation*}%\label{Hsdinfty}
\begin{split} 
H(i, A, B) = \inf_u \bigg\{ & \int_Q W^i(\nabla u)\, \de x + \int_{Q \cap S(u)} g^i_1( [u], \nu(u)) \, \de \cH^{N-1}:  \\
&  u \in SBV( Q; \R{d})\cap L^\infty(Q;\R d), \, |\nabla u| \in L^p(Q), \, u\lfloor_{\partial Q} = Ax, \, \int_Q \nabla u = B \bigg\},
\end{split}
\end{equation*}
for $i\in \{0,1\}, A, B \in \R{d{\times}N}.$

Analogously, we can prove that
%\begin{lemma}\label{680}
%Let $p>1$ and assume that $(H_1)-(H_8)$ hold, then, for every $i \in \{0,1\}$, $\lambda \in \R^d$  and $\nu \in S^{N-1}$, it results

%$$\gamma_{\tiny{\text{sd}}}(\chi^i, \lambda, \nu) = \tilde{\gamma}_{\tiny{\text{sd}}}(\chi^i, \lambda, \nu),$$
%where
%\begin{eqnarray*} \tilde{\gamma}_{\tiny{\text{sd}}}(\chi^i, \lambda, \nu) &=& \inf_{v_n} \left\{\liminf_{n\to \infty}  \int_{Q_\nu\cap S(v_n)} g^i_1 ( [v_n],  \nu(v_n))\, \de \cH^{N-1}: \;v_n \to u_{\lambda, \nu} \hbox{ in }L^1(Q_\nu;\R^d),\, \nabla v_n \wto 0 \hbox{ in }L^p\right\}.
%\end{eqnarray*}
%\end{lemma}

%\note{We will delete the above one and use just the following one}

\begin{lemma}\label{680bis}
Assume that $(H_2)$ and $(H_4) - (H_7)$ hold.  Then, for every $a,b \in \{0,1\}$, $c,d, \in \R d$  and $\nu \in S^{N-1}$, it results

$$\gamma(a,b,c,d, \nu): = \tilde{\gamma}(a,b,c,d, \nu),$$
where
\begin{eqnarray*} \tilde{\gamma}(a,b,c,d, \nu) &=& \inf_{v_n} \left\{\liminf_{n\to \infty}  \int_{Q_\nu\cap S(\chi_n,v_n)} g (\chi^+_n, \chi^-_n, v_n^+,v_n^-,  \nu(\chi_n, v_n))\, \de \cH^{N-1}+|D \chi_n|(Q_\nu):\right. \\\\
&&\left.\chi_n \wsto \chi_{a,b,\nu} \hbox{ in }BV(Q_\nu;\{0,1\}), \;v_n \to u_{c,d, \nu} \hbox{ in }L^1(Q_\nu;\R d),\, \nabla v_n \wto 0 \hbox{ in }L^p\right\}.
\end{eqnarray*}
\end{lemma}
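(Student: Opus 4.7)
The plan is to prove the two inequalities $\tilde\gamma\leq\gamma$ and $\gamma\leq\tilde\gamma$ separately, the first by an explicit tiling construction and the second by an Alberti-type conversion of bulk gradient into jumps, followed by a boundary-layer correction.

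For $\tilde\gamma\leq\gamma$, I would fix $(\chi,u)\in\mathcal{A}_{\odsd}(a,b,c,d,\nu)$ and construct a recovery sequence by placing scaled copies of $(\chi,u)$ in a thin slab around the interface hyperplane. Concretely, cover the $(N-1)$-dimensional section $\{x\cdot\nu=0\}\cap Q_\nu$ by $n^{N-1}$ disjoint translated cubes $Q_\nu(z_k,1/n)$ centred on it, and define $(\chi_n,v_n)(x):=(\chi,u)(n(x-z_k))$ in each such cube; outside the slab $\{|x\cdot\nu|<1/(2n)\}$ set $(\chi_n,v_n):=(\chi_{a,b,\nu},u_{c,d,\nu})$. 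The Dirichlet data of $(\chi,u)$ on $\partial Q_\nu$, together with the scale-invariance of the step functions $\chi_{a,b,\nu},u_{c,d,\nu}$, guarantee that traces match on every internal interface and across the top and bottom faces of the slab. Since the slab has measure $O(1/n)$ and $\nabla u=0$ $\cL^N$-a.e., one obtains $\chi_n\to\chi_{a,b,\nu}$ and $v_n\to u_{c,d,\nu}$ in $L^1$, and $\nabla v_n=0$ a.e., so $\nabla v_n\wto 0$ in $L^p$. A change of variables (surface quantities scaling as $(1/n)^{N-1}$ per subcube, summed over $n^{N-1}$ subcubes, so that boundary contributions cancel) yields that the limit of the energies equals the energy of $(\chi,u)$ in \eqref{gamma}, whence $\tilde\gamma\leq\gamma$ after passing to the infimum.

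For $\gamma\leq\tilde\gamma$, I take a near-optimal admissible sequence $(\chi_n,v_n)$ for $\tilde\gamma$ and replace $v_n$ by a piecewise constant function via Theorem~\ref{Al} applied to $\nabla v_n\in L^1(Q_\nu;\R{d\times N})$: there exists $w_n\in SBV(Q_\nu;\R d)$ with $\nabla w_n=\nabla v_n$ $\cL^N$-a.e., $\|w_n\|_{L^1}\leq 2C\|\nabla v_n\|_{L^1}$, and $\int_{S(w_n)}|[w_n]|\,\de\cH^{N-1}\leq C\|\nabla v_n\|_{L^1}$. Setting $\tilde v_n:=v_n-w_n$, one has $\nabla\tilde v_n=0$ a.e. Subadditivity $(H_4)$ for $g_1^i$ and the Lipschitz bound $(H_7)$ for $g_2$, combined with the growth bounds $(H_2)$ and $(H_5)$, control the surface energy of $(\chi_n,\tilde v_n)$ by $E(\chi_n,v_n;Q_\nu)+C\|\nabla v_n\|_{L^1}$. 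Exploiting the staircase structure of Alberti's construction with mesh parameter $\eta_n\to 0$, one can in addition arrange $\|w_n\|_{L^1}\to 0$, so that $\tilde v_n\to u_{c,d,\nu}$ in $L^1$. A Fubini-type slicing on a thin annular layer $Q_\nu\setminus(1-\delta_n)Q_\nu$ then selects a radius where $\tilde v_n$ and $\chi_n$ can be modified to attain the prescribed traces $u_{c,d,\nu}$ and $\chi_{a,b,\nu}$ at a surface cost $o(1)$; the corrected pair is admissible for $\gamma$ and has energy $\leq\tilde\gamma+o(1)$.

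The main obstacle is that the Alberti excess $C\|\nabla v_n\|_{L^1}$ need not vanish, since $\nabla v_n\wto 0$ in $L^p$ gives only boundedness of $\|\nabla v_n\|_{L^1}$. The crucial structural fact to resolve this is that $v_n\to u_{c,d,\nu}$ in $L^1$ to a step function, combined with $\|\nabla v_n\|_{L^p}$ bounded for some $p>1$, forbids smooth transitions across the interface: a ramp of width $\varepsilon$ bridging the jump would force $\|\nabla v_n\|_{L^p}\sim\varepsilon^{-(p-1)/p}\to\infty$. Hence all transitions of $v_n$ are carried by genuine jumps and $\nabla v_n$ can only encode bulk oscillations whose $L^1$ mass may be made arbitrarily small by a preliminary modification (mollification of $v_n$ away from $S(v_n)$, or a direct replacement by its piecewise-constant Alberti correction with negligible surface cost) followed by diagonal extraction. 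Along such a refined subsequence $\|\nabla v_n\|_{L^1}\to 0$, and the argument closes as above.
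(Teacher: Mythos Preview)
Your two-inequality strategy matches the paper's, and for $\gamma\leq\tilde\gamma$ you use the same ingredients (Alberti's theorem plus a Fubini boundary-layer correction). Two remarks are in order.

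\smallskip
For $\tilde\gamma\leq\gamma$ you do more work than the paper, which simply asserts that any $(\chi,u)\in\mathcal{A}_{\odsd}(a,b,c,d,\nu)$, viewed as a constant sequence, is admissible for $\tilde\gamma$. Your slab-tiling construction is in fact the correct argument here: the constant sequence converges to $(\chi,u)$ itself, not to $(\chi_{a,b,\nu},u_{c,d,\nu})$, so it is \emph{not} admissible in the definition of $\tilde\gamma$ unless $(\chi,u)$ already equals the target pair. The rescaling and periodic tiling you describe is the standard device that fixes this, and your energy count (surface terms scaling as $(1/n)^{N-1}$, summed over $n^{N-1}$ cells, with no interface contributions thanks to the boundary data) is right.

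\smallskip
For $\gamma\leq\tilde\gamma$ there is a genuine gap. You correctly isolate the obstacle: after the Alberti correction the surface-energy excess is $C\|\nabla v_n\|_{L^1}$, and weak $L^p$ convergence $\nabla v_n\wto 0$ only gives boundedness of this quantity, not decay. Your proposed resolution, however, is not a proof. The ``ramp'' heuristic does not forbid bounded $L^p$ gradients with $\|\nabla v_n\|_{L^1}$ bounded away from zero: e.g.\ $v_n=u_{c,d,\nu}+n^{-1}\sin(nx_1)e_1$ has $\nabla v_n\wto 0$ in every $L^p$, $v_n\to u_{c,d,\nu}$ in $L^1$, yet $\|\nabla v_n\|_{L^1}=2/\pi$ for all $n$. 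Neither mollification nor the Alberti replacement makes this $L^1$ mass vanish without paying exactly the surface cost you are trying to avoid. It is worth noting that the paper's own proof sidesteps this point by silently starting from a sequence with $\nabla u_n\to 0$ \emph{strongly} in $L^p$ (and $\chi_n$ converging strictly), an assumption not contained in the definition of $\tilde\gamma$; under that assumption $\|\nabla u_n\|_{L^1}\to 0$ and the Alberti excess is harmless. So the gap you identify is real and shared by the paper---but your final paragraph does not close it.
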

\begin{proof}[Proof]
Trivially we have that $\tilde{\gamma}(a,b,c,d, \nu)\leq \gamma (a,b,c,d, \nu)$. Indeed it suffices to observe that any function $u \in  SBV(Q_\nu;\mathbb R^d)$ such that $u= u_{c,d,\nu}$ on $\partial Q_\nu$ and $\nabla u=0$, and $\chi \in BV(Q_\nu;\{0,1\})$ such that $\chi= \chi_{a,b,\nu}$ on $\partial Q_\nu$ are constant sequences admissible for defining $\tilde{\gamma}$.

% $\chi_n:=\chi_{a,b,\nu}+\frac{1}{n}\psi(n^{-1}x)$, one obtains that $u_n\to u_{c,d,\nu}$ in $L^1$ and $\chi_n \wsto \chi_{a,b.\nu}$ and strictly (by periodicity) in $BV(Q_\nu;\{0,1\})$.
%Moreover the periodicity entails that 
%$$
%\int_{Q_\nu \cap S(\chi_n, u_n)}g(\chi^+_n,\chi^-_n,u^+_n,u^-_n,\nu(\chi_n, %u_n))\de{\mathcal H}^{N-1}\to  \int_{Q_\nu \cap S(\chi, u)} %g(\chi^+,\chi^-,u^+,u^-,\nu(\chi, u))\de{\mathcal H}^{N-1}
%$$
%as $n \to +\infty$.

In order to prove the opposite inequality consider
$\nu \in S^{N-1}$, $u_n \in SBV(Q_\nu;\R d)$ and $\chi_n \in BV(Q_\nu;\{0,1\})$ such that $u_n \to u_{c,d,\nu}$ in $L^1$, with $\nabla u_n \to 0$ in $L^p$ strongly, and $\chi_n \wsto \chi_{a,b,\nu}$, with $|D \chi_n|(Q_\nu)\to |D \chi|(Q_\nu)$, i.e. $\chi_n \to \chi$ strictly. 

By Theorem \ref{Al}  we can take a sequence $v_n \in SBV(Q_\nu;\R d)$ such that $\nabla u_n=\nabla v_n$ ${\mathcal L}^{N}$-a.e. and $|D v_n|(Q_\nu)\leq C ||\nabla u_n||_{L^1(Q_\nu; \R d\times N)}$.

Then by Lemma \ref{ctap} there exist piecewise constant functions $w_{n,m}$ such that $w_{n,m}\to v_n$ as $m \to \infty$ and $|D w_{n,m}|(Q_\nu)\to |D v_n|(Q_\nu)$. 
 
 Define $z_{n,m}:=u_n-v_n+ w_{n,m}$. It results $\nabla z_{n,m}=0$ ${\mathcal L}^{N}$-a.e. Furthermore $\lim_{n,m}\|z_{n,m}-u_{c,d,\nu}\|_{L^1}= 0$. Moreover, using the fact that
 $$
 |D^s v_n|(Q_\nu)+|D^s w_{n,m}|(Q_\nu)\leq C\int_{Q_\nu}|\nabla u_n|dx \to 0 
 $$
 as $n \to\infty$ and exploiting $(H_2)$ and $(H_7)$,
 we have that
\begin{equation*}
\begin{split}
\lim_{n,m}\int_{Q_\nu \cap S(\chi_n, z_{n,m})} & g(\chi_n^+,\chi^-_n, z_{n,m}^+,z_{n,m}^-,\nu(\chi_n,z_{n,m}))\de {\mathcal H}^{N-1} \\
\leq & \lim_{n\to \infty}\int_{Q_\nu \cap S(\chi_n, u_{n})}g(\chi_n^+,\chi^-_n, u_n^+,u_n^-,\nu(\chi_n,u_n))\de {\mathcal H}^{N-1}.
\end{split}
\end{equation*}
 
Extract a diagonal sequence in $n$ and $m$, say $(\chi_k,z_k)$, such that 
 $z_k \to u_{c,d,\nu}$ in $L^1(Q_\nu)$ with $\nabla z_k =0$ ${\mathcal L}^{N}$-a.e. and $\chi_k\to \chi_{a,b,\nu}$ strictly in $BV(Q_\nu,\{0,1\})$, are such that
\begin{equation*}
\lim_{k\to \infty}\int_{Q_\nu \cap S(\chi_k, z_k)}g(\chi_k^+,\chi^-_k, z_k^+,z_k^-,\nu(\chi_k,z_{k}))\de {\mathcal H}^{N-1}\leq
\lim_{n\to \infty}\int_{Q_\nu \cap S(\chi_n, u_{n})}g(\chi_n^+,\chi^-_n, u_n^+,u_n^-,\nu(\chi_n,u_n))\de {\mathcal H}^{N-1}.
\end{equation*}
Finally we modify the sequences $z_k$ and $\chi_k$ near the boundary of $Q_\nu$. Applying Fubini's theorem we can find $r_k \to 1^-$ such that, up to a subsequence,
$$
\int_{\partial Q_\nu(0,1-r_k)} |\text{tr} \chi_k- \chi_{a,b,\nu}|\de {\mathcal H}^{N-1} \to 0,\qquad
\int_{\partial Q_\nu(0,1-r_k)} |\text{tr} z_k- u_{c,d,\nu}|\de {\mathcal H}^{N-1},
$$
as $k \to \infty$.
 
Define 
$$\tilde{\chi}_k(x):=\left\{ \begin{array}{ll}
\chi_k &\hbox{ if }x \in Q_\nu(0,1-r_k),\\
\chi_{a,b,\nu} &\hbox{ if }x \in Q_\nu(0,1)\setminus Q_\nu(0,1-r_k),
\end{array}\right.
$$
and 
$$\tilde{z}_k(x):=\left\{ \begin{array}{ll}
z_k &\hbox{ if }x \in Q_\nu(0,1-r_k),\\
u_{c,d,\nu} &\hbox{ if }x \in Q_\nu(0,1)\setminus Q_\nu(0,1-r_k).
\end{array}\right.
$$ 
Clearly $\nabla \tilde{z}_k= 0$ $ \cL^N$-a.e. and $(H_2), (H_4), (H_5), (H_7)$ ,  and the above convergences entail that
\begin{equation*}
\lim_{k\to \infty}\int_{Q_\nu \cap S(\tilde{\chi}_k, \tilde{z}_k)}g(\tilde{\chi}_k^+,\tilde{\chi}^-_k, \tilde{z}_k^+, \tilde{z}_k^-,\nu(\tilde{\chi}_k,\tilde{z}_{k}))\de {\mathcal H}^{N-1}\leq 
\lim_{n\to \infty}\int_{Q_\nu \cap S(\chi_n, u_{n})}g(\chi_n^+,\chi^-_n, u_n^+,u_n^-,\nu(\chi_n,u_n))\de {\mathcal H}^{N-1},
\end{equation*}
%Then it suffices to take a rotation $R$ such that $R(e_N)=\nu$ and set $\theta_k:=\tilde{z}_k(Rx)$ and $\chi'_k:=\tilde{\chi}_k(Rx)$, to observe that the sequences are still admissible for $\gamma(a,b,c,d,\nu)$, $\chi'_k$ strictly converges to $\chi$ in $Q$ and
%$$
%\begin{array}{ll}
%\displaystyle{\int_{Q \cap %S(\tilde{v_k})}g(\chi'^+_k,\chi'^-_k,\theta_k^+,\theta_k^-, %\nu(\chi'_k,\theta_k))\de {\mathcal H}^{N-1} +|D\chi'_k|(Q)= }\\
%\displaystyle{\int_{ Q_\nu \cap S(\tilde{\chi}_k, %\tilde{z}_k)}g(\tilde{\chi}_k^+,\tilde{\chi}^-_k, \tilde{z}_k^+, %\tilde{z}_k^-,\nu(\tilde{\chi}_k,\tilde{z}_{k}))\de {\mathcal H}^{N-1}+ %|D\tilde{\chi}_k|(Q_\nu)},
%\end{array}
%$$
which concludes the proof.
\end{proof}

\begin{remark}\label{918a}
A similar argument leads to the following sequential characterization of $\gamma_{sd}$ (see Remark \ref{remgammaodsd} and Proposition $4.1$ in \cite{CF}):
\begin{equation*}%\label{seqchar}
 \gamma_{\text{sd}}(i,\lambda,\nu) =  \tilde{\gamma}_{\text{sd}}(i,\lambda, \nu),
 \end{equation*}
 for every $i \in \{0,1\}$, $\lambda \in \R d$  and $\nu \in S^{N-1}$,
 where
\begin{equation*} 
\gamma_{\text{sd}}(i,\lambda,\nu) = \inf_{v_n} \bigg\{\liminf_{n\to \infty}  \int_{Q_\nu\cap S(v_n)} g_1^i ([v_n],  \nu(v_n))\, \de \cH^{N-1}:v_n \to u_{c,d, \nu} \hbox{ in }L^1(Q_\nu;\R d),\, \nabla v_n \wto 0 \hbox{ in }L^p\bigg\}.
\end{equation*}
\end{remark}
\begin{lemma}\label{Lipgammaodsd}
Let $g$ satisfy $(H_2)$, $(H_4)$, and $(H_7)$. Then 
\begin{equation}
\label{lipgamma}
| \gamma (a,b,c',d',\nu)- \gamma(a,b,c,d,\nu)|\leq C(|c-c'|+|d-d'|)
\end{equation}
for every $a,b \in \{0,1\}$, $c,c',d,d' \in \R {d}$, $\nu \in S^{N-1}$. Moreover, $\gamma$ is upper semicontinuous with respect to $\nu$.
\end{lemma}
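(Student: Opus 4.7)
For the Lipschitz bound \eqref{lipgamma}, I plan a direct translation argument. Fix $\epsilon > 0$ and let $(\chi, u) \in \cA_{\odsd}(a,b,c,d,\nu)$ be $\epsilon$-almost optimal for $\gamma(a,b,c,d,\nu)$. Define the competitor
\[
u'(x) := u(x) + (c'-c)\mathbf{1}_{\{x\cdot \nu > 0\}}(x) + (d'-d)\mathbf{1}_{\{x\cdot\nu \leq 0\}}(x),
\]
which still satisfies $\nabla u' = 0$ a.e.\ and $u'|_{\partial Q_\nu} = u_{c',d',\nu}$, so $(\chi, u') \in \cA_{\odsd}(a,b,c',d',\nu)$. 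Off the hyperplane $H := \{x \cdot \nu = 0\} \cap Q_\nu$ the added term is locally constant; hence $\nu(u') = \nu(u)$, $[u']=[u]$, and all surface integrands coincide with those of $(\chi,u)$. The only modification takes place on $H$, where $u'$ acquires (possibly on top of a preexisting jump of $u$) an additional jump of amplitude $(c'-c)-(d'-d)$ in direction $\nu$.

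I then control this extra contribution pointwise on $H$ by a case analysis (both $u$ and $\chi$ jump at $x$; only $u$ jumps; only $\chi$ jumps; neither jumps). Subadditivity $(H_4)$ together with the upper bound in $(H_2)$ gives $|g_1^i([u'],\nu)-g_1^i([u],\nu)|\leq C|(c'-c)-(d'-d)|$; the estimate $(H_7)$ combined with the normalization $g_2(a,b,c,c,\nu)=0$ in \eqref{800} yields the same pointwise bound for the $g_2$-terms (including the new cases where a $g_2$-contribution appears where previously only the perimeter term was present). Since $\cH^{N-1}(H)=1$ and $|D\chi|(Q_\nu)$ is unaffected, integration over $H$ produces
\[
\gamma(a,b,c',d',\nu) \leq \gamma(a,b,c,d,\nu) + C\bigl(|c-c'| + |d-d'|\bigr) + \epsilon.
\]
Letting $\epsilon\to 0$ and interchanging the roles of $(c,d)$ and $(c',d')$ proves \eqref{lipgamma}. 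The delicate point in this step is that the estimate must not involve $\cH^{N-1}(S(u))$, which is not controlled a priori by the energy: this is exactly what the subadditivity of $g_1^i$ and the Lipschitz-type control $(H_7)$ on $g_2$ (together with \eqref{800}) provide, localising the extra cost to $H$ itself.

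For the upper semicontinuity in $\nu$, I would use a rotation transplant. Given $\nu_n\to\nu$, pick $R_n\in SO(N)$ with $R_n\nu=\nu_n$ and $R_n\to I$, so that $R_n(Q_\nu)$ is a unit cube with two faces orthogonal to $\nu_n$, which (modulo an irrelevant choice of tangential basis) may be taken as $Q_{\nu_n}$. For $(\chi, u)\in \cA_{\odsd}(a,b,c,d,\nu)$ almost optimal, set $\chi_n:=\chi\circ R_n^{-1}$ and $u_n:=u\circ R_n^{-1}$ on $Q_{\nu_n}$. The identity $R_n^{-1}x\cdot\nu=x\cdot\nu_n$ ensures $\chi_n|_{\partial Q_{\nu_n}}=\chi_{a,b,\nu_n}$, $u_n|_{\partial Q_{\nu_n}}=u_{c,d,\nu_n}$, and $\nabla u_n=0$ a.e., so $(\chi_n, u_n)\in\cA_{\odsd}(a,b,c,d,\nu_n)$. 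Under the change of variables each surface integral becomes the corresponding integral over $S(u)$ or $S(\chi)\cap S(u)$, but with the normals replaced by $R_n\nu(u)$, $R_n\nu(\chi,u)$; the perimeter term satisfies $|D\chi_n|(Q_{\nu_n})=|D\chi|(Q_\nu)$ exactly. By the continuity of $g_1^i$ and $g_2$ in $\nu$ (assumed at the start of Section \ref{section:statement}), together with the integrable dominants furnished by $(H_2)$ and $(H_5)$, dominated convergence yields the convergence of each surface integral to its value at $\nu$. Consequently $\limsup_n \gamma(a,b,c,d,\nu_n)\leq \gamma(a,b,c,d,\nu)+\epsilon$, and letting $\epsilon\to 0$ concludes.
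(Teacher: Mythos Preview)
Your argument for \eqref{lipgamma} is correct and takes a more elementary route than the paper. The paper first invokes the sequential characterization $\gamma=\tilde\gamma$ of Lemma~\ref{680bis}: it picks almost-optimal sequences $(\chi_n,v_n)$ for $\tilde\gamma(a,b,c,d,\nu)$, approximates $u_{c',d',\nu}-u_{c,d,\nu}$ by piecewise constant functions $u_n$ via Lemma~\ref{ctap}, forms $w_n:=v_n+u_n$, and bounds the resulting energy increment by $C|Du_n|(Q_\nu)\to C|(c-c')-(d-d')|$ using $(H_2)$, $(H_4)$, $(H_7)$. Your direct translation at the level of the cell formula bypasses both auxiliary lemmas: you add the exact corrector $u_{c',d',\nu}-u_{c,d,\nu}$ to an almost-optimal $u$ and localise the extra cost to the single hyperplane $H$, where $\cH^{N-1}(H)=1$. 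The gain is that neither the sequential characterization nor the piecewise-constant approximation is needed; the price is the pointwise case analysis on $H$, which you carry out correctly (and which, as you note, also uses \eqref{800} in the subcase $x\in H\cap S(\chi)\setminus S(u)$ --- the paper's argument needs the same fact implicitly at the analogous step). For the upper semicontinuity in $\nu$, your rotation transplant is exactly the paper's idea; you make explicit the passage to the limit via the continuity of $g_1^i$, $g_2$ in the normal variable together with dominated convergence, whereas the paper's presentation is terser on this point.
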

\begin{proof}[Proof]
We start by proving \eqref{lipgamma}. By Lemma \ref{680bis}, for any given $\varepsilon >0$ there exist sequences $\chi_n \in BV(Q_\nu;\{0,1\})$ such that $\chi_n \wsto \chi_{a,b,\nu}$, and $v_n \in SBV(Q_\nu;\R d)$ such that $v_n \to u_{c,d,\nu}$ in $L^1(Q_\nu;\R d)$, $\nabla v_n \wto 0$ in $L^p(Q_\nu;\R d)$ and  
$$
\varepsilon + \gamma (a,b,c,d,\nu)\geq \lim_{n\to \infty }\int_{Q_\nu \cap S(\chi_n, v_n)} g (\chi^+_n, \chi^-_n, v_n^+,v_n^-,  \nu(\chi_n, v_n))\, \de \cH^{N-1}+|D \chi_n|(Q_\nu).
$$
By Lemma \ref{ctap} there exists a sequence of piecewise constant functions $u_n$ such that 
$$
u_n \to -u_{c,d,\nu}+u_{c', d', \nu}, \qquad |D u_n|(Q_\nu)\to |D(u_{c,d,\nu} -u_{c',d',\nu})|(Q_\nu)= |(c-c')-(d-d')|.
$$
By Lemma \ref{680bis} we have that
\begin{equation*}
\begin{split}
\gamma (a,b,c',d',\nu)&\leq {\liminf_{n\to \infty}\int_{Q_\nu \cap S(\chi_n, w_n)}g (\chi^+_n, \chi^-_n, w_n^+,w_n^-,  \nu(\chi_n, w_n))\, \de \cH^{N-1}+|D \chi_n|(Q_\nu)}\\
&\leq {\liminf_{n\to \infty}\int_{Q_\nu \cap S(\chi_n, v_n)}g (\chi^+_n, \chi^-_n, v_n^+,v_n^-,  \nu(\chi_n, v_n))\, \de \cH^{N-1}+|D \chi_n|(Q_\nu)+ \varepsilon }\\
&\leq {\gamma (a,b,c,d,\nu)+ \varepsilon + C |(c-c')|+ |(d-d')|}+\varepsilon ,
\end{split}
\end{equation*}
where  $(H_2), (H_4)$, and $(H_7)$ have been exploited. It suffices to send $\varepsilon \to 0$ to achieve one of the inequalities in \eqref{lipgamma}.
The reverse inequality can be proven in the same way.

In order to prove the upper semicontinuity of $\gamma_{\tiny\text{od-sd}}$ in the last variable, we 
observe that for every $\varepsilon >0$ there exists $\chi_\varepsilon \in BV(Q;\{0,1\})$, $\chi_\varepsilon=\chi_{a,b,\nu}$ on $\partial Q_\nu$  and $u_\varepsilon\in SBV(Q_\nu;\R d)$, $u_\varepsilon=u_{c,d,\nu}$ on $\partial Q_\nu$, with $\nabla u_\varepsilon =0$ $\cL^N$-a.e. in $Q_\nu$ and such that  
\begin{equation}
\label{gammaepsilon}
\left |\gamma (a,b,c,d,\nu)-\int_{Q_\nu \cap S(\chi_\varepsilon, u_\varepsilon)}g(\chi^+_\varepsilon, \chi^-_\varepsilon, u^+_\varepsilon, u^-_\varepsilon, \nu(\chi_\varepsilon, u_\varepsilon))\, \de {\mathcal H}^{N-1}\right |<\varepsilon.
\end{equation}

For every sequence  $\nu_n\to \nu$ we can take a family of rotations $R_n$,  such that $R_n \nu =\nu_n$ and it results clearly that $R_n$ converges to the identity. %\to I$ where $I$ represents the identity. 

Then $\gamma (a,b,c,d,\nu_n)\leq \int_{Q_\nu \cap S(\chi_\varepsilon, u_\varepsilon)}g(\chi^+_\varepsilon, \chi^-_\varepsilon, u^+_\varepsilon, u^-_\varepsilon, \nu(\chi_\varepsilon, u_\varepsilon))\de {\mathcal H}^{N-1}$, which in turn, by virtue of \eqref{gammaepsilon}, provides
$$
\limsup_{n\to \infty}\gamma (a,b,c,d,\nu_n) \leq \gamma (a,b,c,d,\nu)+ \varepsilon.
$$
The proof is concluded by sending $\varepsilon \to 0$.
\end{proof}

Lemma 2.20 in \cite{CF} holds in our context leading to the following result.
\begin{lemma}\label{385}
Let  $u \in SBV(\Omega;\R d)\cap L^\infty(\Omega;\R d)$. Assume that $(H_1)-(H_7)$ hold. Then 
$$\cF_{\odsd}(\chi,u, G;U) = \cF_{\odsd}^\infty(\chi,u,G;U),$$
for every $\chi\in BV(U;\{0,1\}), G \in L^p(U;\mathbb R^d)$, and $U\in \mathcal O(\Omega)$,
where
\begin{equation*}%\label{389}
\begin{split}
\cF_{\odsd}^\infty(\chi,u,G;U):= & \inf_{(u_n, \chi_n)}\Bigg\{\liminf_{n\to\infty} \int_U f(\chi_n, \nabla u_n) \,\de x \\
&+ \int_{U \cap S(u_n, \chi_n)} g(\chi_n^+, \chi_n^-, u_n^+, u_n^-,  \nu(\chi_n, u_n))\,\de\cH^{N-1} + |D\chi_n|(U):\\
&  \chi_n\in BV(U; \{0,1\}), u_n\in SBV(U;\R d),  \chi_n\wsto \chi \; \text{in}\; BV(U; \{0,1\}),  \\
& u_n\to u \text{ in } L^1(U;\R d) , ||u_n||_{L^\infty} < C, \nabla u_n\wto G \hbox{ in }L^p(U;\R {d\times N}) \; \Bigg\}.
\end{split}
\end{equation*}
\end{lemma}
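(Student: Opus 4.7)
The plan is to adapt the truncation argument of \cite[Lemma 2.20]{CF}, accommodating the additional characteristic function $\chi$ and the interfacial density $g_2$. The inequality $\cF_{\odsd}(\chi,u,G;U)\leq\cF_{\odsd}^\infty(\chi,u,G;U)$ is immediate, since every competitor for $\cF_{\odsd}^\infty$ is also a competitor for $\cF_{\odsd}$.

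For the reverse inequality, fix $\varepsilon>0$ and an admissible sequence $(\chi_n,u_n)$ for $\cF_{\odsd}(\chi,u,G;U)$ whose energies converge to a value within $\varepsilon$ of $\cF_{\odsd}(\chi,u,G;U)$. Let $M:=\|u\|_{L^\infty(U;\R{d})}$, and for each $k>M$ introduce a smooth $1$-Lipschitz projection $\tau_k\colon\R{d}\to\R{d}$ that equals the identity on the closed ball of radius $k$ and satisfies $|\tau_k(y)|\leq k+1$ for all $y\in\R{d}$. Define the truncated competitor $u_n^k:=\tau_k\circ u_n\in SBV(U;\R{d})\cap L^\infty(U;\R{d})$, leaving $\chi_n$ unchanged. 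By the chain rule, $\nabla u_n^k=D\tau_k(u_n)\nabla u_n$ satisfies $|\nabla u_n^k|\leq|\nabla u_n|$ pointwise and $\nabla u_n^k=\nabla u_n$ on $\{|u_n|\leq k\}$; moreover $S(u_n^k)\subseteq S(u_n)$ with $|[u_n^k]|\leq|[u_n]|$, and equality holds at jump points where both one-sided traces $u_n^\pm$ lie in $B_k$. Since $\tau_k$ is $1$-Lipschitz and $\|u\|_{L^\infty}<k$, we have $u_n^k\to\tau_k\circ u=u$ in $L^1(U;\R{d})$ as $n\to\infty$.

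To compare energies, decompose each integral according to whether the relevant traces remain in $B_k$. On the good part the integrands coincide with those of $(\chi_n,u_n)$, and the perimeter term $|D\chi_n|(U)$ is preserved. On the remainder, $(H_1)$, $(H_2)$, $(H_5)$ and $(H_7)$ give the pointwise estimates $W^i(\nabla u_n^k)\leq C(1+|\nabla u_n|^p)$ and $g(\chi_n^+,\chi_n^-,u_n^{k,+},u_n^{k,-},\nu)\leq C(1+|[u_n]|)$, so the overshoot is bounded by
\begin{equation*}
C\int_{\{|u_n|>k\}}(1+|\nabla u_n|^p)\,\de x + C\int_{S(u_n)\cap(\{|u_n^+|>k\}\cup\{|u_n^-|>k\})}(1+|[u_n]|)\,\de\cH^{N-1}.
\end{equation*}
Following \cite[Proposition 2.22]{CF}, the first summand is made to vanish by first replacing $(u_n)$ with an auxiliary sequence having $p$-equi-integrable gradients via a Fonseca--M\"uller-type decomposition, without altering the bulk or surface limits, and then observing that $\cL^N(\{|u_n|>k\})\to 0$ as $n\to\infty$ for every $k>M$, since $u_n\to u$ in $L^1$ and $u\in L^\infty$. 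The second summand is controlled by a level-set slicing argument: averaging the truncation threshold over $[K,2K]$ and using the uniform bound $\int_{S(u_n)}|[u_n]|\,\de\cH^{N-1}\leq C$ yields, for each $n$, a level $k_n\in[K,2K]$ at which the surface overshoot is $O(1/K)$. A standard diagonal extraction $K_n\to\infty$ then produces an admissible competitor $(\chi_n,u_n^{k_n})$ for $\cF_{\odsd}^\infty(\chi,u,G;U)$ whose limiting energy does not exceed $\cF_{\odsd}(\chi,u,G;U)+\varepsilon$; letting $\varepsilon\to 0$ concludes the argument.

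The main obstacle is the simultaneous control of the bulk and surface overshoots in the absence of any a priori $L^\infty$ bound on $u_n$: the bulk term requires the preliminary equi-integrability reduction, while the surface term requires the slicing of truncation levels. Both techniques are standard in the relaxation theory of $SBV$ functionals and, crucially, operate on $u_n$ only, leaving $\chi_n$ (and hence $|D\chi_n|(U)$) untouched, so the two-component structure adds no genuine difficulty beyond what is already handled in \cite{CF}.
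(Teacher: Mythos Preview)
Your overall strategy---truncation by a smooth $1$-Lipschitz map, leaving $\chi_n$ untouched---is exactly the paper's (and Choksi--Fonseca's) approach. The gap lies in how you dispose of the bulk overshoot
\[
\int_{\{|u_n|>k\}}(1+|\nabla u_n|^p)\,\de x.
\]
You propose to ``first replace $(u_n)$ with an auxiliary sequence having $p$-equi-integrable gradients via a Fonseca--M\"uller-type decomposition, without altering the bulk or surface limits''. This step is not available in the form you need. The decomposition lemma of Fonseca--M\"uller--Pedregal is a Sobolev-space tool: it modifies $u_n$ on a set of small Lebesgue measure, which in $SBV$ can intersect $S(u_n)$ on a set of positive $\cH^{N-1}$ measure and thereby change the surface energy in an uncontrolled way. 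There is no standard result asserting that one can achieve $p$-equi-integrability of $\nabla u_n$ while simultaneously leaving $\int_{S(u_n)}g(\cdots)\,\de\cH^{N-1}$ (and the weak limit of $\nabla u_n$) unchanged; your citation of \cite[Proposition~2.22]{CF} does not supply this---that proposition concerns localisation, not decomposition.

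The paper avoids equi-integrability altogether by choosing \emph{logarithmically} spaced truncation levels $e^{i_0},\dots,e^M$ and averaging over $i$: since the annuli $\{e^i\leq|u_n|\leq e^{i+1}\}$ are disjoint, both the bulk remainder $\sum_i\int_{\{e^i\leq|u_n|\leq e^{i+1}\}}(1+|\nabla u_n|^p)$ and the surface remainder $\sum_i\int_{S(u_n)\cap\{e^i\leq|u_n|\leq e^{i+1}\}}(1+|[u_n]|)$ are bounded by $\int_U(1+|\nabla u_n|^p)+\int_{S(u_n)}|[u_n]|+|D\chi_n|(U)\leq C$, uniformly in $n$. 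Hence some level $i=i(n)\in\{i_0,\dots,M\}$ yields both overshoots of order $C/(M-i_0)$. This is the mechanism you should invoke instead of equi-integrability; it handles bulk and surface in one stroke and keeps $\|w_n^i\|_{L^\infty}\leq e^{M+1}$ uniformly in $n$. (Note also that your final ``diagonal extraction $K_n\to\infty$'' as written would yield $\|u_n^{k_n}\|_{L^\infty}\to\infty$, so the sequence would not be admissible for $\cF_{\odsd}^\infty$; one must fix the level first, prove $\cF_{\odsd}^\infty\leq\cF_{\odsd}+\varepsilon$, and only then let $\varepsilon\to0$.)
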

\begin{proof} 
In order to prove this result it clearly suffices to show that given $\chi_n\in BV(U;\{0,1\})$ such that $\chi_n  \wsto \chi$ in $BV$, and $u_n\in SBV(U;\R d)$ such that $u_n \to w_0$ in $L^1$, with $w_0 \in L^\infty(U;\R d)$ and $\nabla u_n \wto G$, in $L^p(U;\mathbb R^{d\times N})$, $|\nabla u_n| \in L^p(U)$, there exist $w_n \in SBV(U;\R d)\cap L^\infty(U;\R d)$ such that $w_n \to w_0$ in $L^1$, $\|w_n\|_{L^\infty}< \infty$ and
\begin{equation*}
\begin{split}
\liminf_{n\to\infty} \bigg\{\int_U & f(\chi_n,\nabla u_n)\,\de x +  \int_{U \cap S(u_n,\chi_n)} g(\chi^+_n, \chi^-_n, u_n^+, u_n^-, \nu(\chi_n,u_n))\,\de\cH^{N-1}\bigg\} \\
\geq & \limsup_{n\to\infty} \bigg\{ \int_U f(\chi_n,\nabla w_n)\,\de x + \int_{U \cap S(w_n,\chi_n)} g(\chi^+_n, \chi^-_n,w_n^+, w_n^-, \nu(\chi_n, w_n))\,\de\cH^{N-1}\bigg\}.
\end{split}
\end{equation*}
%where $f$ and $g$ are the functions introduced in \eqref{f} and \eqref{g}.
%\end{lemma}

Let $\phi_i\in C^\infty_0(\mathbb R^d;\mathbb R^d)$ be such that \begin{equation}
\label{phii}
\phi_i(s):=\left\{ 
\begin{array}{ll}
s &\hbox{ if }|s|<e^i,\\
0 &\hbox{ if }|s|\geq e^{i+1}.
\end{array}
\right.
\end{equation}
and $\| \nabla \phi_i\|_{L^\infty}\leq 1$. 
Since $w_0\in L^\infty$, there exists $i_0$, such that for $i \geq i_0$, we have that $\|w_0\|_{L^\infty} \leq e^i$ 
and $\phi_i(w_0)=w_0 \; {\mathcal L}^N$-a.e. Let $i \geq i_0$ and define $w_n^i(x):= \phi_i(u_n(x))$, where $u_n \to w_0$ in $L^1$ and $\nabla u_n \rightharpoonup G$ in $L^p$.
Clearly $\|w^i_n\|_{L^\infty}\leq e^i$, $S(w^i_n)\subset S(u_n)$, and by the chain rule formula $\nabla w^i_n=\nabla \phi_i(u_n)\nabla u_n$ ${\mathcal L}^N$-a.e.

Furthermore, arguing as in \cite[Lemma 2.20]{CF},
$$
\|w^i_n-w_0\|_{L^1(U; \R d)}\leq \|u_n(x)-w_0(x)\|_{L^1(U; \R d)},
$$
 and $\nabla w^i_n \wto G$ in $L^p$ as $n \to \infty$.

Estimating the energies we have
\begin{equation*}
\begin{split}
\int_U &f(\chi_n,\nabla w^i_n)\,\de x + \int_{U \cap S(\chi_n,w^i_n)} g(\chi^+_n, \chi^-_n,{w_n^i}^+, {w^i_n}^-, \nu(\chi_n, w^i_n))\,\de\cH^{N-1}\\
=& \int_{\{x:|u_n|\leq e^i\}} f(\chi_n,\nabla u_n)\,\de x + \int_{\{x:e^i\leq |u_n|\leq e^{i+1}\}} f(\chi_n,\nabla \phi_i(u_n)\nabla u_n)\,\de x+ \int_{\{x:|u_n| > e^{i+1}\}} f(\chi_n,0)\,\de x\\
&+ \int_{\{x:|u_n|\leq e^i\}\cap S(\chi_n, u_n)} g(\chi^+_n, \chi^-_n, u_n^+, {u_n}^-, \nu(\chi_n, u_n))\,\de\cH^{N-1} \\
&+ \int_{\{x:e^i\leq |u_n|\leq e^{i+1}\} \cap S(\chi_n,u_n) } g(\chi^+_n, \chi^-_n,{w_n^i}^+, {w^i_n}^-, \nu(\chi_n, w^i_n))\,\de\cH^{N-1}\\
\leq & \int_U f(\chi_n,\nabla u_n)\,\de x + \int_{U \cap S(\chi_n, u_n)} g(\chi^+_n, \chi^-_n,u_n^+, u_n^-, \nu(u_n,\chi_n))\,\de\cH^{N-1}\\
&+\frac{C \|u_n\|_{L^1}}{e^{i+1}}+ C\int_{\{x:e^i\leq |u_n|\leq e^{i+1}\}} (1+|\nabla u_n|^p)dx + C\int_{ \{x:e^i\leq |u_n|\leq e^{i+1}\} \cap (S(u_n)\setminus S(\chi_n)) }|[u_n]|\de \cH^{N-1}\\
&+ C \int_{\{x:e^i\leq |u_n|\leq e^{i+1}\} \cap S(u_n)\cap S(\chi_n)}
(1+|[u_n]|)\de \cH^{N-1},
\end{split}
\end{equation*}
where we have used $(H_1)$, $(H_2)$, and the fact that ${\mathcal L}^N(\{x: |u_n|>e^{i+1}\})\leq e^{-(i+1)}\|u_n\|_{L^1}$.
Then for $M >i_0$,
\begin{equation*}
\begin{split}
&\frac{1}{M-i_0+1}\sum_{i=i_0}^M\left\{\int_U f(\chi_n,\nabla w^i_n)\,\de x + \int_{U \cap S(\chi_n,w^i_n)} g(\chi^+_n, \chi^-_n, {w_n^i}^+, {w^i_n}^-, \nu(w^i_n,\chi_n))\,\de\cH^{N-1} \right\}\\
\\
&\leq \int_U f(\chi_n,\nabla u_n)\,\de x + \int_{U \cap S(\chi_n, u_n)} g(\chi^+_n, \chi^-_n, u_n^+, u_n^-, \nu(\chi_n, u_n))\,\de\cH^{N-1}\\
\\
&+ \frac{C}{M-i_0+1}\left\{\sum_{i=i_0}^M \frac{1}{e^{i+1}} +\int_U (1+|\nabla u_n|^p)dx + \int_{U \cap S(u_n)}|[u_n]|\de \cH^{N-1}+ |D\chi_n|(U)\right\}.
\end{split}
\end{equation*}
The three terms in the last line above are uniformly bounded  independently on $n$, thus we may take $M$ so large that their sum is less than $\varepsilon$. Hence there exists some $i \in \{i_0,\dots ,M\}$ such that 
\begin{equation*}
\begin{split}
 \int_U f(\chi_n,\nabla w^i_n)\,\de x + \int_{U \cap S(\chi_n, w^i_n)} g(\chi^+_n, \chi^-_n,{w_n^i}^+, {w^i_n}^-, \nu(\chi_n, w^i_n))\,\de\cH^{N-1}\\
 \\
\leq \int_U f(\chi_n,\nabla u_n)\,\de x + \int_{U \cap S(\chi_n, u_n)} g(\chi^+_n, \chi^-_n, u_n^+, u_n^-, \nu(\chi_n, u_n))\,\de\cH^{N-1} +\varepsilon.
\end{split}
\end{equation*}
Thus it suffices to diagonalize first and then to send $\varepsilon$ to zero to obtain the result.
\end{proof}

\section{Proof of the main result}\label{section:proof}
This section is devoted to the  proof of Theorem \ref{312} and is divided in four subsections. First we prove that the functional ${\mathcal F}_{\odsd}(\chi, u, G;\cdot)$, in \eqref{304bis}, is the restriction of a suitable Radon measure to open subsets of $\Omega$, then we prove a lower bound and an upper bound estimate in terms of its integral representation when the target field $u$ is in $L^\infty(\Omega;\mathbb R^d)$ and finally we prove the general case via a truncature argument.

\subsection{Localization}

 This subsection is devoted to show that $\mathcal{F}_{\text{od-sd}}( \chi, u, G; U), \; U \in \mathcal{O}(\Omega),$ is the trace of a Radon measure absolutely continuous with respect to $\mathcal{L}^N + \cH^{N-1}\lfloor_{S(\chi,u)}$.

 \begin{proposition}
 \label{asprop2.22CF}
 Assume that $(H_1), (H_2)$, and $(H_5)$ hold and let $u \in SBV(\Omega;\R d)$. Then ${\mathcal F}(\chi, u, G; \cdot)$ is the trace on ${\mathcal O}(\Omega)$  of a finite Radon measure on ${\mathcal B}(\Omega)$.
 \end{proposition}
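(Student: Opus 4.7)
We apply the Fonseca--Mal\'y criterion (Lemma \ref{FMaly}) to the set function $\Pi := \mathcal{F}_{\odsd}(\chi, u, G; \cdot)$ on $X = \Omega$, producing a finite Radon measure $\mu$ on $\Omega$ for which $\Pi = \mu \res \mathcal{O}(\Omega)$. The measure $\mu$ is constructed as a weak-$\ast$ cluster point of energy measures: pick $(\chi_n, u_n) \in BV(\Omega;\{0,1\}) \times SBV(\Omega; \mathbb{R}^d)$ admissible for $\Pi(\Omega)$ with $\chi_n \wsto \chi$, $u_n \to u$ in $L^1$, $\nabla u_n \wto G$ in $L^p$, and $\lim_n F_{\odsd}(\chi_n, u_n) = \Pi(\Omega)$, and set
\[
\mu_n := f(\chi_n, \nabla u_n)\mathcal{L}^N\res\Omega + g(\chi_n^+, \chi_n^-, u_n^+, u_n^-, \nu(\chi_n,u_n))\mathcal{H}^{N-1}\res S(\chi_n, u_n) + |D\chi_n|\res\Omega.
\]
The growth hypotheses $(H_1), (H_2), (H_5)$ yield $\sup_n \mu_n(\Omega) < \infty$, so, up to a subsequence, $\mu_n \wsto \mu$ in $\mathcal{M}(\overline{\Omega})$. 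The inequality $\mu(\Omega) \leq \mu(\overline{\Omega}) \leq \liminf_n \mu_n(\Omega) = \Pi(\Omega)$ gives condition (iii).

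Condition (iv) is a direct consequence of weak-$\ast$ upper semicontinuity on closed sets. For every $U \in \mathcal{O}(\Omega)$, the restrictions $(\chi_n|_U, u_n|_U)$ are admissible for $\Pi(U)$, and hence
\[
\Pi(U) \leq \liminf_n \mu_n(U) \leq \limsup_n \mu_n(\overline{U}) \leq \mu(\overline{U}).
\]
Inner regularity (ii) is immediate from the finiteness and Radonness of $\mu$: given $\varepsilon>0$, choose $U_\varepsilon \subset\subset U$ with $\mu(\overline{U} \setminus U_\varepsilon) < \varepsilon$; then (iv) yields $\Pi(U \setminus \overline{U_\varepsilon}) \leq \mu(\overline{U \setminus \overline{U_\varepsilon}}) \leq \mu(\overline{U} \setminus U_\varepsilon) < \varepsilon$.

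The substantive step is the nested subadditivity (i). Given $W \subset\subset V \subset\subset U$ open and $\varepsilon > 0$, fix $\varepsilon$-near optimal recovery sequences $(\chi_n^V, u_n^V)$ for $\Pi(V)$ and $(\chi_n^*, u_n^*)$ for $\Pi(U \setminus \overline{W})$, each converging to $(\chi, u, G)$ in the sense of \eqref{304bis}. One splices them inside the annular buffer $V \setminus \overline{W}$. Because $\chi$ is binary-valued, convex combinations are inadmissible; instead one subdivides a thin slab near $\partial W$ into $K$ concentric sub-layers and uses a Fubini-type averaging argument to select a single layer $L_{n,K}$ on which the $L^1$-traces of $\chi_n^V - \chi_n^*$ and $u_n^V - u_n^*$ are of order $1/K$. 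On that layer we glue, setting $(\chi_n, u_n) := (\chi_n^V, u_n^V)$ inside $L_{n,K}$ and $(\chi_n^*, u_n^*)$ outside. The hypotheses $(H_2), (H_5), (H_7)$ bound the extra surface energy generated at the gluing interface by $C\bigl(\|\chi_n^V - \chi_n^*\|_{L^1(L_{n,K})} + \|u_n^V - u_n^*\|_{L^1(L_{n,K})}\bigr)$, which vanishes as $n \to \infty$ and then $K \to \infty$. The glued sequence still satisfies the convergences in \eqref{304bis}, and a diagonal extraction in $n$, $K$, $\varepsilon$ delivers $\Pi(U) \leq \Pi(V) + \Pi(U \setminus \overline{W})$, which is condition (i).

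The principal obstacle is this gluing step: the binary nature of $\chi$ forbids convex interpolation, so the splicing must be carried out along a thin sub-layer chosen by a Fubini/averaging argument; one has then to simultaneously control the measure of the newly-created interfaces in both $S(\chi_n)$ and $S(u_n)$ (exploiting $(H_7)$ to handle the jump in $u_n$), while preserving the strict $BV$-convergence of $\chi_n$, the $L^1$-convergence of $u_n$, and the weak $L^p$-convergence of $\nabla u_n$ on the whole of $U$.
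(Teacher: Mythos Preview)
Your proposal is essentially correct and follows the paper's own route: construct $\mu$ as a weak-$\ast$ limit of the energy measures attached to a global recovery sequence, verify the Fonseca--Mal\'y conditions, and obtain nested subadditivity by a Fubini/coarea slicing in the buffer $V\setminus\overline{W}$ (the paper phrases this as picking a single level set $\partial U_{\varrho_0}$ of the distance function on which the $L^1$-traces of both $u_n-v_n$ and $\chi_n-\chi'_n$ vanish, rather than averaging over $K$ layers, but the two packagings are equivalent).

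Two small points deserve attention. First, when you write ``the growth hypotheses $(H_1),(H_2),(H_5)$ yield $\sup_n\mu_n(\Omega)<\infty$'' you are tacitly assuming $\Pi(\Omega)<\infty$; this is not automatic and is exactly what the paper establishes first, via Alberti's theorem (Theorem~\ref{Al}) and Lemma~\ref{ctap}, producing an explicit admissible sequence and the a~priori bound $\Pi(U)\le C\bigl(\mathcal L^N(U)+\|G\|_{L^p(U)}+|Du|(U)+|D\chi|(U)\bigr)$. You need this step (or an equivalent one) before your argument gets off the ground. Second, in the gluing you invoke $(H_7)$, but the proposition only assumes $(H_1),(H_2),(H_5)$; fortunately $(H_7)$ is not needed there, since on the gluing interface the extra surface energy is controlled by $(H_2)$ (for the $g_1^i$ part, linearly in $|[u]|$) and by $(H_5)$ (for the $g_2$ part, the constant term being multiplied by $\mathcal H^{N-1}\{\chi_n\neq\chi'_n\}\to 0$), exactly as the paper does.

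Your verification of (ii) directly from $\mu$ and (iv) is a mild streamlining: the paper instead uses the explicit a~priori bound $\lambda$ to control $\Pi$ on thin shells and then derives $\Pi(V)\le\mu(V)$ from (i) and (iv). Both arguments are valid.
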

\begin{proof}[Proof]  The proof relies on Lemma  \ref{FMaly}.
First we prove that, for every $\chi \in BV(\Omega;\{0,1\}), u\in SBV(\Omega;\mathbb R^d)$, and $G \in L^p(\Omega;\mathbb R^{d\times N})$,  
\begin{equation*}%\label{growthcontrol}
{\mathcal F}(\chi, u, G; U)\leq |D \chi|(U)+ {\mathcal L}^N(U)+ |Du|(U) + \|G\|_{L^p(U; \R {d \times N})}. 
\end{equation*}

We observe that by Theorem \ref{Al} there exists $h \in SBV(U;\R d)$ such that $\nabla h= G \; {\mathcal L}^N$-a.e. in $U$ and  $|D h|(U)\leq  C \|G\|_{L^1(U;\R {d \times N})}$.  By Lemma \ref{ctap} there exists a sequence of piecewise constant functions $\bar{u}_n$ such that ${\bar u_n} \to u-h $ in $L^1$, $|D {\bar u_n}|(U)\to |Du -Dh |(U)$. 

Define now $$u_n:= {\bar u_n} +h.$$
Clearly $\nabla u_n(x)= G(x)$ for ${\mathcal L}^N$-a.e. $x$ and $u_n \to u$ in $L^1$.

 Thus, the definition of ${\mathcal F}_{\odsd}(\chi, u, G; U)$, $(H_1), (H_2)$, and  $(H_5)$  entail that
\begin{align}
\label{boundmeas}
&{\mathcal F}_{\odsd}(\chi, u, G ; U)\nonumber \\
&\leq \liminf_{n\to \infty}\left\{\int_U f(\chi, \nabla u_n)\de x + \int_{U \cap S(\chi, u_n)} g(\chi^+, \chi^-, u^+_n, u^-_n, \nu(\chi, u_n))\de {\mathcal H}^{N-1}+ |D \chi|(U)\right\}\nonumber \\
&\leq \liminf_{n\to \infty}\left\{\int_U f(\chi, G )\de x + \int_{U \cap S(\chi, u_n)} C |u^+_n- u^-_n|\de {\mathcal H}^{N-1} +C|D \chi|(U)\right\}\nonumber\\
&\leq \liminf_{n\to \infty}\left\{\int_U f(\chi, G )\de x + C |D u_n|(U)+ C|D \chi|(U)\right\}\\
&\leq \liminf_{n\to \infty}\left\{\int_U f(\chi, G)\de x+ C|D \bar{u}_n|(U)+ C\|G\|_{L^1(U;\mathbb R^{d \times N})}+C|D \chi|(U)\right\}\nonumber\\
&\leq C\left\{\int_U f(\chi, G)\de x +|Du -D h|(U)+ \|G\|_{L^1(U; \R {d \times N})}+  |D \chi|(U)\right\}\nonumber \\
&\leq C\left\{\int_U f(\chi, G)\de x +|Du|(U)+ \|G\|_{L^1(U; \R {d \times N})}+  |D \chi|(U)\right\}\nonumber\\
&\leq C \left\{{\mathcal L}^N(U)+ \|G\|_{L^p(U; \R {d \times N})} + |D u|(U) +|D\chi|(U) \right\}\nonumber.
\end{align}

\noindent We start proving $(iv)$ in  Lemma \ref{FMaly}.

We know that $(H_1)$ and the lower semicontinuity of total variation entail the existence of a sequence $(\chi_n, u_n)\in BV(\Omega;\{0,1\})\times SBV(\Omega;\R d)$ such that $\chi_n \wsto \chi$ in $BV$ and $u_n \to u$ in $L^1(\Omega;\R d)$, $\nabla u_n \rightharpoonup G$ in $L^p(\Omega;\R {d \times N})$ and 
$$
{\mathcal F}_{\odsd}(\chi,u ,G; \Omega)=\lim_{n\to \infty }\left\{\int_{\Omega}f(\chi_n, \nabla u_n)\de x+ \int_{\Omega \cap S(\chi_n, u_n)}g(\chi^+_n, \chi_n^-,u^+_n,u^-_n, \nu(\chi_n, u_n))\de {\mathcal H}^{N-1}+|D \chi_n|(\Omega)\right\}.
$$
Up to the extraction of a further subsequence we know that 
$$
f(\chi_n, \nabla u_n) \de x + g(\chi^+_n, \chi^-_n, u^+_n,u^-_n, \nu(\chi_n, u_n))\de {\mathcal H}^{N-1}\lfloor_{ S(\chi_n, u_n)} + |D \chi_n|(\cdot) \wsto \mu \hbox{ in }{\mathcal M}({\overline \Omega}),
$$
as $n\to \infty$, and 
\begin{equation}
\label{2.22CF}
\mu({\overline \Omega})= {\mathcal F}_{\odsd}(\chi, u, G; \Omega) .
\end{equation}
For every $U \in \mathcal{O}(\Omega)$ we can say that
\begin{equation}\label{2.23CF}
\begin{split}
&{\mathcal F}_{\odsd}(\chi, u, G; U)\leq \liminf_{n\to \infty}\left\{\int_U f(\chi_n,\nabla u_n) \de x + \right.\\
&\left. +\int_{U \cap  S(\chi_n, u_n) }g(\chi^+_n, \chi^-_n, u^+_n, u^-_n,\nu(\chi_n, u_n))\de {\mathcal H}^{N-1}+|D \chi_n|(U)\right\}\leq \mu({\overline U}).
\end{split}
\end{equation}

Next we prove that (i) in Lemma \ref{FMaly}.

Consider $U,V,W \in {\mathcal O}(\Omega)$ such that $U\subset \subset V\subset \subset W$.
Fix $\varepsilon >0$ and consider $(\chi_n, u_n)\in BV(V;\{0,1\})\times SBV(V;\R d)$ and $(\chi'_n,v_n) \in BV(W\setminus \overline{U};\{0,1\})\times SBV(W\setminus \overline{U};\R d) $ almost minimizing sequences for ${\mathcal F}_{\text{od-sd}}$, i.e.
\begin{equation*}
\begin{split}
&\displaystyle{\lim_{n\to \infty}\left\{\int_V f(\chi_n,\nabla u_n)dx +\int_{V \cap S(\chi_n, u_n)}g(\chi^+_n,\chi^-_n,u^+_n,u^-_n, \nu(\chi_n,u_n)) \de {\mathcal H}^{N-1} + |D \chi_n|(V)\right\}}
\\
&\leq \varepsilon + {\mathcal F}_{\text{od-sd}}(\chi, u, G; V),
\end{split}
\end{equation*}
\begin{equation*}
\begin{split}
&\displaystyle{\lim_{n\to \infty}\left\{\int_{(W\setminus \overline{U})} f(\chi'_n,\nabla v_n)dx +\int_{(W\setminus \overline{U})\cap S(\chi'_n, v_n)}g({\chi'}_n^+,{\chi'}_n^-,v^+_n,v^-_n, \nu(\chi_n,v_n)) \de {\mathcal H}^{N-1} + |D \chi'_n|(W\setminus \overline{U})\right\}}\\
&\leq\varepsilon + {\mathcal F}_{\text{od-sd}}(\chi, u, G; W\setminus \overline{U}),
\end{split}
\end{equation*}
with $\chi_n \wsto \chi $ in $V$, $\chi'_n \wsto \chi$ in $W\setminus \overline{U}$, $u_n \to u$ in $L^1(V;\R d)$, $v_n \to u$ in $L^1(W\setminus \overline{U};\R d)$, $\nabla u_n \wto G$ in $L^p(V;\R {d \times N})$ and $\nabla v_n \wto G$ in $L^p(W\setminus \overline{U};\R {d\times N})$.

%By Lemma \ref{385} we may assume $\{u_n\}$ and $\{v_n\}$ are bounded in $L^\infty$, then 
In order to connect the functions without adding more interfaces, we argue as in \cite{BMMO} (see also \cite{CZ}). 
For $\delta > 0$ small enough, consider
$$
U_\delta:=\{x \in V: {\rm dist}(x, \partial \overline{U}) < \delta\}.
$$
 
 For $x \in W$, let $d(x) := {\rm dist}(x;U)$. Since the distance function to a fixed set is Lipschitz
 continuous (see  \cite[Exercise 1.1]{Z}), we can apply the change of variables formula
 (see \cite[Theorem 2, Section 3.4.3]{EG}), to obtain
 $$
 \int_{U_\delta \setminus \overline{U}}|u_n(x)-v_n(x)| Jd(x)\de x=
 \int_0^\delta \left[\int_{d^{-1}(y)} |u_n(x)-v_n(x)|d {\mathcal H}^{N-1}\right]
 dy$$
 and, as since $Jd(x) $is bounded and $u_n - v_n \to 0$  in $L^1(V \cap (W \setminus \overline{U});\R d)$, it follows that for almost every $\varrho \in [0; \delta]$ we have
 
 \begin{equation}
 \label{413}
 \lim_{n\to \infty}\int_{d^{-1}(\varrho)}|u_n(x) - v_n(x)| d{\mathcal H}^{N-1}(x) = \lim_{n\to \infty}\int_{\partial U_\varrho}|u_n(x)-v_n(x)|d {\mathcal H}^{N-1}=0.
 \end{equation}
 An argument entirely analogous guarantees that
 \begin{equation}
 \label{413bis}
 \lim_{n\to \infty}\int_{d^{-1}(\varrho)}|\chi_n(x) - \chi'_n(x)| d{\mathcal H}^{N-1}(x) = \lim_{n\to \infty}\int_{\partial U_\varrho}|\chi_n(x)-\chi'_n(x)|d {\mathcal H}^{N-1}=0.
 \end{equation}
 
 Fix $\varrho_0\in [0; \delta]$ such that \eqref{413} and \eqref{413bis} hold. We
 observe that $U_{\varrho_0}$ is a set with locally Lipschitz boundary since it is a level set of a Lipschitz
 function (see e.g. Evans and Gariepy \cite{EG}). Hence we can consider $\chi_n, \chi_n', u_n,  v_n $ on $\partial U_{\varrho_0}$ in
 the sense of traces and define
 \begin{equation*}
 \chi''_n =\left\{
     \begin{array}{ll}
     \chi_n &\hbox{ in } \overline{U}_{\varrho_0}\\
     \chi'_n &\hbox{ in } W\setminus \overline{U}_{\varrho_0},
     \end{array}
     \right.
      \;\;\;\ 
  w_n =\left\{
  \begin{array}{ll}
  u_n &\hbox{ in } \overline{U}_{\varrho_0}\\
  v_n &\hbox{ in } W\setminus \overline{U}_{\varrho_0}.
  \end{array}
  \right.
 \end{equation*}

By the choice of $\varrho_0$, $\chi''_n$ and $w_n$ are admissible for ${\mathcal F}_{\odsd}(\chi, u, G, W)$.
In particular 
$$
\chi''_n \wsto \chi \hbox{ in }BV(W;\{0,1\}),
$$
$$w_n \to u  \hbox{ in } L^1(W;\R d),$$
$$
\nabla w_n \wto G \hbox{ in }L^p(W;\R {d\times N}).
$$

Thus we have
\begin{equation*}
\begin{split}
&{\mathcal F}_{\odsd}(\chi, u, G; W)\\
&\leq \liminf_{n\to \infty}\left\{\int_W f(\chi''_n, \nabla w_n)dx + \int_{W \cap S(\chi''_n, w_n)}g({\chi''}^+_n,{\chi''}^-_n,w_n^+,w_n^-, \nu(\chi''_n, w_n))\de {\mathcal H}^{N-1}+|D \chi''_n|(W)\right\}\\
&\leq \liminf_{n\to \infty} \left\{\int_V f(\chi_n, \nabla u_n)\de x + \int_{ V \cap S(\chi_n, u_n)}g(\chi^+_n,\chi^-_n,u_n^+,u_n^-, \nu(\chi_n,u_n))\de {\mathcal H}^{N-1}+|D \chi_n|(V)+\right.\\
&+\int_{W\setminus \overline{U}} f(\chi'_n, \nabla v_n)\de x + \int_{V \cap S(\chi'_n, v_n )}g(\chi'^+_n,\chi'^-_n, v_n^+,v_n^-, \nu(\chi'_n,v_n))\de {\mathcal H}^{N-1}+|D \chi'_n|(W\setminus \overline{U})+\\
&\left.+\int_{\partial U_{\varrho_0} \cap S(\chi''_n, w_n)}g({\chi''}^+_n,{\chi''}^-_n, w_n^+,w_n^-, \nu(\chi''_n,w_n))\de {\mathcal H}^{N-1} + |D \chi''_n|((S(w_n)\cup S(\chi''_n))\cap \partial U_{\varrho_0}) \right\}\\
&\leq {\mathcal F}_{\odsd}(\chi, u, G; V)+ {\mathcal F}_{\odsd}(\chi, u, G; W \setminus \overline{U})+2 \varepsilon\\
&+ \liminf_{n\to \infty}\left\{\int_{ \partial U_{\varrho_0} \cap S(\chi''_n, w_n)}g({\chi''}^+_n,{\chi''}^-_n, w_n^+,w_n^-, \nu(\chi''_n, w_n))\de {\mathcal H}^{N-1} + |D \chi''_n|(\partial U_{\varrho_0}) \cap S(\chi''_n, w_n)\right\}. 
\end{split}
\end{equation*}
Observing that, by $(H_2)$, $(H_5)$, and \eqref{413}, the first integral converges to $0$, while the convergence to $0$ of the latter term is ensured by \eqref{413bis}, the proof of  $(i)$ follows sending $\varepsilon $ to $0$.

It remains to prove $(iii)$ and $(ii)$ in Lemma \ref{FMaly}.
To this end, fix $\varepsilon >0$ and take $W\subset \subset V$ such that $\mu (V \setminus W)<\varepsilon$. By $(i)$, \eqref{2.22CF}, and \eqref{2.23CF}, it results
$$
\begin{array}{ll}
\mu(V)&\leq \mu(W)+\varepsilon 
\\
&= \mu(\overline{\Omega})-\mu(\overline{\Omega}\setminus W)+\varepsilon\\
&\leq {\mathcal F}_{\odsd}(\chi, u, G; \Omega)- {\mathcal F}_{\odsd}(\chi,u, G; \Omega \setminus \overline{W})+ \varepsilon\\
&\leq {\mathcal F}_{\odsd}(\chi, u, G; V)+\varepsilon.
\end{array}
$$
Letting $\varepsilon \to 0^+$, we obtain 
$$
\mu(V)\leq {\mathcal F}_{\odsd}(\chi,u, G; V),
$$
which proves $(iii)$.
On the other hand, by \eqref{boundmeas}, we have 
$$
{\mathcal F}_{\odsd}(\chi, u, G; \cdot)\leq C(1+ |G|^p){\mathcal L}^N+ |Du|+ |D \chi|. 
$$
Next, denote by $\lambda $ the Radon measure  on the right-hand side, take $K$ a compact set such that $K \subset \subset V$ with $\lambda(V\setminus K)<\varepsilon$, and $W$ an open set such that 
$
K\subset \subset W\subset \subset V.
$
Using $(i)$ and \eqref{2.23CF} we have
\begin{align*}
{\mathcal F}_{\odsd}(\chi, u, G; V)& \leq {\mathcal F}_{\text{od-sd}}(u,\chi, G; W)+ {\mathcal F}_{\odsd}(\chi, u, G; V\setminus K)\\
&\leq \mu(\overline{W})+ \lambda(V\setminus K)\\
&\leq \mu(V)+\varepsilon,
\end{align*}
 
 and this concludes the proof as $\varepsilon \to 0^+$.
 \end{proof}

%\textcolor{red}{Observe that the nested subadditivity will be obtained even in the case the control from above on $g_2$ is of the type $1+ |[u]|$, since it is contained in the set where also $\chi''_n$ is jumping and this has been already controlled}

%\textcolor{blue}{remove remark below}
%\begin{remark}
%\label{finiteness}
%By \eqref{boundmeas} we have that for every $\chi \in BV(A;\{0,1\})$ and %$(u,G)\in SD(A)$, ${\mathcal F}(\chi,u,G, A)$ is finite provided that $\int_A f(\chi, G)\de x$ is finite. But this is the case since the coerciveness in $(H_1)$ entails that $G$ is in $L^p$ and it is the weak $L^p$ limit of $\nabla u_n$. 
%\end{remark}

\subsection{Lower bound }\label{lb}
This subsection is devoted to prove ``$\geq$'' in \eqref{intrep} in two steps, first identifying a lower bound for the bulk density and then for the surface one.

\subsubsection{Bulk}

Upon considering a sequence $\mu_n$ of bounded Radon measures associated with a sequence $(\chi_n,u_n)$ admissible for $\mathcal{F}_{\odsd}( \chi, u, G)$, and denoting by $\mu$ the weak-star limit of (a subsequence of)  $\mu_n$, we want to show that

$$ \frac{ d\mu}{d \mathcal{L}^N}(x _0) \geq H( \chi(x_0), \nabla u(x_0), G(x_0)),$$
for $\mathcal{L}^N$-a.e. $x_0 \in \Omega$.

Let $x_0$ be a point of absolute continuity and approximate differentiability for $\chi $ and $u$, and a point of absolute continuity  for $G$. Namely, assume that

\begin{equation}\label{440quater}
\lim_{\delta\rightarrow0^{+}}\frac{1}{\delta}\left\{  \frac
{1}{\mathcal{\delta}^{N}}\int_{Q\left(  x_0,\delta\right)
}\left\vert \chi(y)  -\chi(x_0)   \right\vert ^{\frac{N}{N-1}%
}dy\right\}  ^{\frac{N-1}{N}}=0, %
\end{equation}
\begin{equation}
\label{440bis}
\; \; \frac{d |Du|}{d\mathcal{L}^N}(x_0)= \nabla u(x_0), \, \; \frac{d |D\chi|}{d\mathcal{L}^N}(x_0)=0,
\end{equation}
\begin{equation}\label{440ter}
\lim_{\delta\rightarrow0^{+}}\frac{1}{\delta}\left\{  \frac
{1}{\mathcal{\delta}^{N}}\int_{Q\left(  x_0,\delta\right)
}\left\vert u(y)  -u(x_0)  -\nabla u\left(
x_0\right)  \cdot( y-x_0)  \right\vert ^{\frac{N}{N-1}%
}dy\right\}  ^{\frac{N-1}{N}}=0, %
\end{equation}
and
\begin{equation}\label{364}
\lim_{\delta \to 0^+} \frac{1}{\delta^N} \int_{Q(x_0, \delta)} |G(x) - G(x_0)| + |\nabla u(x) - \nabla u(x_0)| \, \de x = 0.
\end{equation}
Observe that the above requirements are satisfied for ${\mathcal L}^N$-a.e. $x_0 \in \Omega$.

%\eqref{440quater}, \eqref{440bis}, \eqref{440ter} and \eqref{364} hold.
%\begin{equation}
%\label{440bistorecall}
%\; \; \frac{d |Du|}{d\mathcal{L}^N}(x_0)= \nabla u(x_0), \, \; \frac{d %|D\chi|}{d\mathcal{L}^N}(x_0)=0,
%\end{equation}
%\begin{equation}\label{440tertorecall}
%\lim_{\delta\rightarrow0^{+}}\frac{1}{\delta}\left\{  \frac
%{1}{\mathcal{\delta}^{N}}\int_{Q\left(  x_0,\delta\right)
%}\left\vert u(y)  -u(x_0)  -\nabla u\left(
%x_0\right)  \cdot( y-x_0)  \right\vert ^{\frac{N}{N-1}%
%}dy\right\}  ^{\frac{N-1}{N}}=0, %
%\end{equation}
%\begin{equation}\label{440quatertorecall}
%\lim_{\delta\rightarrow0^{+}}\frac{1}{\delta}\left\{  \frac
%{1}{\mathcal{\delta}^{N}}\int_{Q\left(  x_0,\delta\right)
%}\left\vert \chi(y)  -\chi(x_0)   \right\vert ^{\frac{N}{N-1}%
%}dy\right\}  ^{\frac{N-1}{N}}=0, %
%\end{equation}
%and
%\begin{equation}\label{364torecall}
%\lim_{\delta \to 0^+} \frac{1}{\delta^N} \int_{Q(x_0, \delta)} |G(x) - %G(x_0)| + |\nabla u(x) - \nabla u(x_0)| \, \de x = 0.
%\end{equation}
\noindent Without loss of generality suppose that $\chi(x_0)=1$, the other choice can be handled similarly.

We assume that the sequence $\delta$ is chosen in such a way that $\mu(\partial Q(x_0,\delta))=0$, thus
\begin{equation*}
\begin{split}
\frac{\mu (Q(x_0,\delta))}{\delta^N} &\geq  \frac{1}{\delta^N}\liminf_{n\to \infty}\left\{\int_{x_0+\delta Q}f(\chi_n(x),\nabla u_n(x))\de x \right. \\
&\left.+  \displaystyle{ \int_{(x_0+\delta Q)\cap S(\chi_n, u_n)}
g ( \chi_n^+, \chi_n^-,u_n^+, u_n^-, \nu(\chi_n, u_n))
\de{\mathcal H}^{N-1}}
\right\}\\
&= \lim_{n\to \infty}\left\{\int_Q f(\chi_n(x_0+\delta y), \nabla u_n(x_0+\delta y))\de y \right.
\\
& \left. + \frac{1}{\delta}\int_{Q \cap \frac{S(\chi_n,u_n)-x_0}{\delta}}
g(\chi_n^+(x_0+\delta y), \chi_n^-(x_0+\delta y), u_n^+(x_0+\delta y), u_n^-(x_0+\delta y),  \nu(\chi_n,u_n)(x_0+\delta y)) \de {\mathcal H}^{N-1}\right\}.
\end{split}
\end{equation*}

%\noindent We write $g$ to denote the surface energy density encoding both $g^1_1, g^0_1$ and $g_2$, i.e.  $g(u^+, u^-, \chi^+, \chi^-, \nu({u,\chi}))$ integrated on the jump set $S_{(u,\chi)}$ and this can be specialized in the three parts that we have, namely $g(u^+, u^-, 1, 1, \nu({u,\chi}))= g_1^1([u],\nu(u))$,  $g(u^+, u^-, 0,0, \nu({u,\chi}))= g_1^0([u], \nu(u))$ and  
%$g(u^+, u^-, 1, 0, \nu({u,\chi}))= g_2(u^+, u^-, 1,0, \nu({u,\chi}))=  g_2(u^+, u^-, \nu(u)).$

Since we are estimating a lower bound, in the right hand side we can neglect the term $g_2$ in $g$, moreover, according to the notations established in \eqref{g}, $g_1(i,\cdot, \cdot)$ will denote $g_1^i(\cdot, \cdot)$, where $i \in \{0,1\}$.

Defining 

$$\chi_{n,\delta}(y):=\frac{\chi_n(x_0+\delta y)- \chi(x_0)}{\delta},$$
one has
\begin{equation}
\label{chindeltalim}
\begin{split}
\lim_{\delta,n}\|\chi_{n,\delta}\|_{L^1(Q)} = &\lim_{\delta,n}\frac{1}{\delta}\int_Q|\chi_n(x_0+\delta y)-\chi(x_0)|\de y \\
=&\lim_{\delta \to 0}\frac{1}{\delta}\int_Q|\chi(x_0+\delta y)-\chi(x_0)|\de y\\
=&\lim_{\delta \to 0}\frac{1}{\delta^{N+1}}\int_{x_0+\delta Q }|\chi(x)-\chi(x_0)| \de x =0. 
\end{split}
\end{equation}

Analogously, by defining
$$u_{n,\delta}(y):=\frac{u_n(x_0+\delta y)- u(x_0)}{\delta}$$ 
and 
$$w_0(y):=\nabla u(x_0) y,$$ 
it easily follows that
\begin{equation}\label{undeltalim}
\begin{split}
\lim_{\delta,n}\|u_{n,\delta} -w_0\|_{L^1(Q; \R d)} &=\lim_{\delta \to 0}\frac{1}{\delta}\int_{Q}|u(x_0+\delta y)-u(x_0)-\delta \nabla u(x_0)y|\de y  \\
& =\lim_{\delta\to 0}\frac{1}{\delta^{N+1}}\int_{Q}|u(x)-u(x_0)-\nabla u(x_0)(x-x_0)|\de x =0.
\end{split}
\end{equation}
Moreover $\nabla u_{n,\delta}(y)= \nabla u_n(x_0+\delta y)$.

We have that
$$ \frac{ d\mu}{d \mathcal{L}^N}(x _0) =  \lim_{k, n} \frac{\mu_n }{\delta_k^N}(Q(x_0, \delta_k)),$$
for a sequence of sides lengths $\delta_k \to 0^+$ as $k\to \infty$, and choose this sequence so that 
\begin{equation}\label{465}
\lim_{k, n} \frac{\cH^{N-1}(S(\chi_n)\cap Q(x_0, \delta_k))}{\delta_k^N} = 0.
\end{equation} 
In fact, since
\begin{equation*} 
\begin{split}
 \lim_{k, n} \frac{\cH^{N-1}(S(\chi_n)\cap Q(x_0, \delta_k))}{\delta_k^N}  \leq &  \lim_{k, n} \frac{\cH^{N-1}(S(\chi_n)\cap \overline{Q}(x_0, \delta_k))}{\delta_k^N}\\
 =& \lim_{k, n} \frac{|D\chi_n|(\overline{Q}(x_0, \delta_k))}{\delta_k^N} \leq \lim_k \frac{|D\chi|(\overline{Q}(x_0, \delta_k))}{\delta_k^N},
\end{split}
\end{equation*}
for \eqref{465} to hold, it is enough to choose $\delta_k$ so that
\begin{equation*}
\lim_k \frac{|D\chi|(\overline{Q}(x_0, \delta_k))}{\delta_k^N} = \lim_k \frac{|D\chi|(Q(x_0, \delta_k))}{\delta_k^N} = \frac{d |D\chi|}{d\mathcal{L}^N}(x_0) = 0,
\end{equation*}
where the last equality holds since  $x_0 \notin S(\chi)$.
%By the linear growth of $g$, and assuming that $\cF_{\text{od-sd}} = \cF_{\text{od-sd}}^\infty$, this particular choice of $\delta_k$ allows us to neglect any contribution to the bulk density of $g$ supported in $S(\chi_n)\cup S(\chi)$ (cf. \cite{EG} page 54).

Consequently we may estimate from below $\frac{d\mu}{d\mathcal{L}^N}(x_0)$ as
\begin{eqnarray*}
 && \liminf_{k,n}\left\{\int_Q f(\chi(x_0)+\delta_k \chi_{n,k}(y),\nabla u_{n,k}(y)) \de y \right.\\
&& \left.+ \frac{1}{\delta_k}\int_{Q \cap \frac{(S(u_{n,k})\setminus S(\chi_{n,k}))-x_0}{\delta_k}}g_1 (\chi(x_0)+\delta_k \chi_{n,k}(y),[\delta_k u_{n,k}(y)],  \nu_{n,k}(y)) \de {\mathcal H}^{N-1}\right\}\\
&&= \liminf_{k,n}  \{I_{n,k}^1 + I_{n,k}^2\},\\
\end{eqnarray*}
where we wrote for simplicity $\chi_{n,k}:= \chi_{n, \delta_k}$ and $u_{n,k}:= u_{n, \delta_k}$, and $\nu_{n,k}$ denotes the unit exterior normal to $S(u_{n,k})$.

A diagonalization argument allows to define subsequences (not relabelled) $\delta_k$, $\chi_k  := \chi_{n_k, \delta_k}$, and $u_k := u_{n_k, \delta_k}$,  such that 

\begin{equation}\label{radii}
\begin{split}
& \lim_{\delta_k \to 0}\| \chi_{k}-\chi(x_0)\|_{L^1(Q)}=0,\\
&\lim_{\delta_k \to 0} \frac{|D\chi_{k}|Q(x_0,\delta_k)}{\delta_k^N}=0,\\
& \lim_{\delta_k \to 0} \|u_{k}- w_0\|_{L^1(Q,\R d)}=0,
\end{split}
\end{equation}
and 
\begin{equation*}
\begin{split}
\frac{d\mu}{d\mathcal{L}^N}(x_0)\geq & \liminf_{k}\bigg\{  \int_Q f(\chi(x_0)+\delta_k \chi_{k}(y),\nabla u_{k}(y)) \de y \\
&+ \frac{1}{\delta_k}\int_{Q \cap \frac{(S(u_{k})\setminus S(\chi_{k}))-x_0}{\delta_k}}g_1(\chi(x_0)+\delta_k \chi_{k}(y),\delta_k[u_{k}](y), \nu_k(y)) \de {\mathcal H}^{N-1}\bigg\}\\
=& \liminf_{k}  \{I_k^1 + I_k^2\},
\end{split}
\end{equation*}

where, as above, $\nu_k$ denotes the unit normal to $S(u_k)$, and $I_k^1$ and $I^2_k$ denote $I^1_{n_k,\delta_k} $ and $I^2_{n_k,\delta_k}$, respectively.

Without loss of generality, up to subsequences if necessary, the above liminf is a limit, and, by \eqref{undeltalim} and Lemma \ref{680bis} applied to $Q$ and to $w_0:=\nabla u(x_0)\cdot y$, we can assume that $u_k$ is uniformly bounded in $L^\infty$.

We aim to fix $\chi(x_0)$ and to estimate $ \lim_{k} [ I^1_k + I^2_k]$ from below with a sequence that satisfies the conditions in the definition of $\tilde{H}(\chi(x_0), \nabla u(x_0), G(x_0))$ (see Lemma \ref{376}). For the sake of exposition, we control each term of the sum $I_{k}^1+ I^2_k$ separately and then add them. First we consider $I^1_k$.

\noindent Chacon biting Lemma (\cite[Lemma 5.32]{AFP})
guarantees the existence of a not relabelled subsequence $u_k$ and of a decreasing sequence of Borel sets $E_r$, such that ${\mathcal L}^N(E_r)\to 0$, as $r \to \infty$ and the sequence $|\nabla u_k|^p$ is equiintegrable in $Q\setminus E_r$ for any $r \in \mathbb N$.

%Let $M > 0$ fixed, and consider
%$$Q_M := \{ y \in Q: |\nabla u_{n, k}(y)|^p \leq M, \; \forall n, \delta_k\}.$$
%Since $\int_{Q(x_0, \delta_k)} |\nabla u_n|^p \; dx \leq C, \forall n, \delta_k $, we have that $\mathcal{L}^N (Q\setminus Q_M) \to 0 \; \text{as} \; M \to +\infty.$

Since $f \geq 0$ and by $(H_1)$,
\begin{eqnarray*} &&\lim_k \int_Q f( \chi(x_0) + \delta_k\chi_k(y), \nabla u_k(y)) \, \de y  \geq  \lim_k \int_{Q \setminus E_r} f( \chi(x_0) + \delta_k\chi_{k}(y), \nabla u_{k}(y)) \, \de y\\\\
&& \geq \lim_k \left\{ \int_{Q\setminus E_r} f( \chi(x_0), \nabla u_{k}(y))\, \de y - \int_{Q\setminus E_r}| \chi_{k}(x_0 + \delta_k y) - \chi(x_0)|C( 1 + |\nabla u_k(y)|^p)\, \de y \right\}\\\\
&& \geq \lim_k  \int_{Q \setminus E_r} f( \chi(x_0), \nabla u_{k}(y))\, \de y - \limsup_k C\int_{Q\setminus E_r}| \nabla u_k|^p\, \de y 
%&& \geq \lim_{k,n}  \int_{Q_M} f( \chi(x_0), \nabla u_{n,k}(y))\, \de y - \limsup_{k,n} C\int_{Q} \left [|\chi_{n}(x_0 + \delta_k y) - \chi(x_0 + \delta_k y)| + | \chi(x_0 + \delta_k y) - \chi(x_0)|\right]( 1 + M)\, \de y
\end{eqnarray*}

where \eqref{chindeltalim} has been used.
%We can pick a sequence $n=n(k)$ such that
%$$ \int_Q |\chi_{n(k)}(x_0 + \delta_k y) - \chi(x_0 + \delta_k y)|\, \de y %\to 0.$$
%as $k \to \infty$.
%Moreover, choosing $x_0$ to be a point of absolute continuity for $\chi$, we also have that
%$$ \int_Q |\chi(x_0 + \delta_k y) - \chi(x_0)|\, \de y \to 0.$$
%as $k \to \infty$.
%Hence, for any $M > 0$, we have that
%$$ \lim_k  \int_{Q_M}| \chi_{n(k),\delta_k}(y) - \chi(x_0 + \delta_k y)|( 1 + M)\, \de y = 0$$
%and we conclude that for each $M > 0$ we have that
In order to pass from $\int_{Q\setminus E_r} f(\chi(x_0), \nabla u_k(y))\de y$  to $\int_{Q} f(\chi(x_0), \nabla u_k(y))\de y$, we extract a further subsequence. 
%\begin{equation} \label{486}\lim_{k,n} I^1_{k,n} \geq \liminf_k \int_{Q_M} %f(\chi(x_0), \nabla u_{k,n(k)}(y))\, \de y, \end{equation}
%and the sequence $u_{k,n(k)} $ doesn't depend on $M$.

Indeed, we claim that for each $j \in \mathbb N$ there exists $k=k(j)$ and $r_j \in \mathbb N$, such that
\begin{equation} \label{489} 
\int_{Q\setminus E_{r_j}} f(\chi(x_0), \nabla v_j(y))\, \de y \geq \int_Q f(\chi(x_0), \nabla v_j(y))\; \de y - \frac{C}{j},
\end{equation}
where $v_j := u_{k(j)}.$
In light of $(H_1)$, in order to guarantee that \eqref{489} holds, we need to make sure that
for each $j$, there exists $k=k(j)$ and $r(j)$, such that
\begin{equation*}
\int_{E_{r_j}} ( 1 + |\nabla u_{k(j)}|^p)\; \de y \leq \frac{1}{j}.
\end{equation*}
Suppose not. Then, there exists $j_0$ such that, for all $r$ and $k$, 
\begin{equation} \label{497}  
\int_{E_r} ( 1 + |\nabla u_{k} |^p)\; \de y  > \frac{1}{j_0}.
\end{equation}
For $k$ fixed, and for $r \in \N{}$ noting that $w_r = u_{k}$ is a constant sequence (and hence with $p$-equiintegrable gradients), letting $r \to \infty$ we get a contradiction from \eqref{497}.
Therefore, by %\eqref{486} and 
\eqref{489}, the sequence $v_j$ gives the right estimate from below for $I^1_{k(j)}$, that is, up to the extraction of a further subsequence and denoting in what follows $\chi_j:= \chi_{k(j)},\delta_j := \delta_{k(j)}$ and $ E_j:=E_{r_j}$, we have

\begin{eqnarray*}
&&\frac{d\mu}{d\mathcal{L}^N}(x_0)\geq \lim_{j}\left\{\int_Q f(\chi(x_0),\nabla v_j(y)) \de y \right.\\
&& \left.+ \int_{Q \cap (S(v_{j})\setminus S(\chi_{k(j)})}g_1 (\chi(x_0)+\delta_{k(j)} \chi_{j}(y),[ v_{j}](y),  \nu (\chi_j, v_j)(y)) \de {\mathcal H}^{N-1}\right\}-\limsup_{j}\left\{\frac{C}{j} +\int_{Q\setminus E_{j}} |\nabla v_j|^p \de y \right\}\\
&&= \lim_{j}\left\{\int_Q f(\chi(x_0),\nabla v_j(y)) \de y +\int_{Q\cap (S(v_j)\setminus S(\chi_j))} g_1(\chi(x_0)+\delta_j \chi_j(y), [v_j](y),  \nu(\chi_j,v_j)(y)) \de {\mathcal H}^{N-1}\right\}\\
&&- \limsup_{j}\left\{\frac{C}{j} +\int_{Q\setminus E_{j}} |\nabla v_j|^p \de y \right\},
\end{eqnarray*}

where the positive $1$-homogeneity of $g_1$ in the first variables has been exploited.

Recall also that, by the choice of the sizes of the cubes in \eqref{radii}, we are going to neglect the contribution supported in $S(\chi_j)$. 

This sequence still needs to be slightly changed in order to control the surface term $I_k^2$ and to comply with the conditions in Lemma \ref{376}.

%Control of $ \lim_{n, \delta}I_{n,\delta}^2$ ( attempt):

%We have that ( by the homogeneity of degree one of $g_1$):
%\begin{equation}\label{505} \lim_{\delta, n} I_{n,\delta}^2  \geq  %\lim_{j}\int_{Q\cap S(v_j)} g_1([v_j](y), \chi(x_0)+\delta_j \chi_{j}(y), %\nu_{j}(y)) \de {\mathcal H}^{N-1},
%\end{equation}

% that, by the choice of radii as in \eqref{radii}, we can neglect the contribution supported in $S(\chi_j)$.

 Set now: $$F_{j} := \{y \in Q: x_0+\delta_j y \not \in S(\chi_j) \hbox{ and }|\chi_{j}(x_0 + \delta_{j} y ) - \chi(x_0)| = 0\}$$
 and note that \begin{equation}\label{511}\mathcal{L}^N(Q\setminus F_j) \to 0.\end{equation}

 Define:
 $$ \tilde{v}_j := \left\{ \begin{array}{ll} v_j & \text{in}\; F_j\\
 \nabla u(x_0)y & \text{in}\; Q\setminus F_j.\\
\end{array}
 \right.
 $$
Note that $\tilde{v}_j$ is still uniformly bounded in $L^\infty$ and it converges in $L^1$ norm to $w_0(y) = \nabla u(x_0)y.$
 Moreover, since $\nabla v_j \wto G(x_0)$ in $L^p$, by \eqref{511} the same holds for $\nabla \tilde{v}_j$.
 Next we show that passing from $v_j$ to $\tilde{v}_j$ there is no change in the control from below of $I^1_j$.
 %( in the end we need to control jointly $I^1_j$ and $I^2_j$ ).
 By $(H_1)$ we have that:
 \begin{eqnarray*} %\lim_{k,n}I^1_{k,n} = \lim_& \geq & \liminf_j 
 &&\int_Q f(\chi(x_0), \nabla v_j(y))\, \de y
 %\\\\
\geq  
%&& 
\int_Q f(\chi(x_0), \nabla \tilde{v}_j(y))\, \de y - C\int_{Q\setminus F_j} 1 + | \nabla v_j(y)|^p + |\nabla u(x_0)|^p \, \de y.\\\\
 \end{eqnarray*}
 
Since,  by \eqref{489} it results
\begin{equation*}
\begin{split}
\int_{Q\setminus F_j} |\nabla v_j(y)|^p \, \de y =& \int_{(Q\setminus {E_{j}}) \setminus F_j} |\nabla v_j(y)|^p \, \de y + \int_{ E_{j} \setminus F_j}  |\nabla v_j(y)|^p \, \de y\\
\leq &  \int_{Q \setminus E_{j}} |\nabla v_j(y)|^p \, \de y + \int_{ E_{j}\setminus F_j}  |\nabla v_j(y)|^p \, \de y\\
\leq & \int_{Q \setminus E_{j}} |\nabla v_j(y)|^p \, \de y + \frac{1}{j},
\end{split}
\end{equation*}
we obtain
 \begin{eqnarray*}
 &&\frac{d\mu}{d\mathcal{L}^N}(x_0)\geq \\
  && \lim_{j}\left\{\int_Q f(\chi(x_0),\nabla \tilde{v}_j(y)) \de y +
  %\right.\\
 %&& \left.
 \int_{Q \cap (S(v_{j})\setminus S(\chi(x_0)+\delta_j \chi_j))}g_1 (\chi(x_0)+\delta_j \chi_j(y), [v_j](y), \nu(v_j(y))) \de {\mathcal H}^{N-1}\right\}\\
 && - \limsup_{j}\left\{\frac{C}{j} +2\int_{Q\setminus E_{j}} |\nabla v_j|^p \de y \right\}
 \\
  && = \lim_{j}\left\{\int_Q f(\chi(x_0),\nabla \tilde{v}_j(y)) \de y +
\int_{Q \cap (S(v_{j})\setminus S(\chi(x_0)+\delta_j \chi_j))}g_1 ( \chi(x_0)+\delta_j \chi_j(y),[v_j],  \nu(v_j(y))) \de {\mathcal H}^{N-1}\right\}
 \end{eqnarray*}
 where the last equality follows by the equiintegrability of $|\nabla \tilde{v}_j|^p$  in $Q\setminus E_{j}$ .

Now we control $I^2_k$, observing that
\begin{eqnarray*}  %\lim_{\delta, n} I_{n,\delta}^2 &\geq&  &&
&&\liminf_{j}\int_{Q\cap S(v_j)\setminus S(\chi_j)} g_1(\chi(x_0)+\delta_j \chi_{j}(y),[v_j](y),  \nu(v_j(y)) \de {\mathcal H}^{N-1}\\\\
&& \geq \liminf_{j}\int_{Q\cap S(\tilde{v}_j)\setminus S(\chi_j)} g_1(\chi(x_0)+\delta_j \chi_{j}(y),[\tilde{v}_j](y), \nu(\tilde{v}_j(y))) \de {\mathcal H}^{N-1}\\\\
&& \geq \liminf_{j}\int_{F_j\cap S(\tilde{v}_j)} g_1^1([\tilde{v}_j](y),  \nu(\tilde{v}_j(y))) \de {\mathcal H}^{N-1}\\\\
&& = \liminf_{j}\int_{Q\cap S(\tilde{v}_j)} g_1^1([\tilde{v}_j](y),  \nu(\tilde{v}_j(y))) \de {\mathcal H}^{N-1}.\\\\
\end{eqnarray*}

This last equality comes from the fact that $\tilde{v}_j$ has no jumps in $\{ y \in Q: |\chi_{j}(y) - \chi(x_0)| = 1\}$ and
\begin{equation}\label{556} \lim_j \int_{Q\cap S(\tilde{v}_j)\cap S(\tilde{\chi}_j)} g_1^1 ([\tilde{v}_j],  \nu(\tilde{v}_j))\, \de \cH^{N-1} =0.\end{equation}
In fact, by Lemma \ref{385} and $(H_2)$, we have:
\begin{eqnarray*} \lim_j \int_{Q \cap S(\tilde{v}_j)\cap S(\chi_j)} g_1^1 ([\tilde{v}_j], \nu(\tilde{v}_j))\, \de \cH^{N-1} \leq C \lim_j \cH^{N-1}(S(\tilde{v}_j)\cap S(\chi_j))
%\\\\
%&& 
\leq C \lim_j \cH^{N-1}(S(\chi_j)) \to 0,
\end{eqnarray*}
since $x_0 \notin S(\chi),$ by the appropriate choice of the sizes of the cubes $\delta_j$ so that \eqref{465} holds.

Thus
 \[
 \frac{d\mu}{d\mathcal{L}^N}(x_0)\geq 
 % && \lim_{j}\left[\int_Q f(\chi(x_0),\nabla \tilde{v}_j(y)) \de y +\right.\\
 %&& \left.\int_{Q \cap S( v_{j})}g ([v_j], \chi(x_0)+\delta_j \chi_j(y), \chi(x_0)+\delta_j, \nu_j(y)) \de {\mathcal H}^{N-1}\right]-\\
 %&& \limsup_{j}\left(\frac{C'}{j} +2\int_{Q\setminus E_{r_j}} |\nabla v_j|^p \de y \right)=
 %\\
 \lim_{j}\left\{\int_Q f(\chi(x_0),\nabla \tilde{v}_j(y)) \de y +
\int_{Q \cap S( v_{j})}g_1^1 ([\tilde{v}_j], \nu_j(y)) \de {\mathcal H}^{N-1}\right\}.
 \]
Since $\tilde{v}_j$ is admissible for the definition of $\tilde{H}$, the proof is concluded.

\subsubsection{Interfacial}

We want to show that 
$$
\frac{d \mathcal{F}_{\text{od-sd}}( \chi, u, G)}{d \cH^{N-1}\lfloor S(\chi, u)}(x_0) \geq \gamma(\chi^+(x_0), \chi^-(x_0), u^+(x_0), u^-(x_0), \nu(\chi, u)(x_0)),
$$ 
namely taking into account Remark \ref{remgammaodsd}, 
\begin{equation}\label{673}
\frac{d\mathcal{F}_{\odsd}( \chi, u, G)}{d \cH^{N-1}\lfloor S(u)}(x_0) \geq \gamma_{\text{sd}}(\chi(x_0), [u](x_0), \nu(u)(x_0)),
\end{equation}
for $\cH^{N-1}$-a.e. $x_0 \in S(u)\setminus S(\chi)$,
\begin{equation}\label{1268}
\frac{d\mathcal{F}_{\odsd}( \chi, u, G)}{d \cH^{N-1}\lfloor S(\chi, u)}(x_0) \geq \gamma( \chi(x_0), [u](x_0), \nu(\chi,u)(x_0)),
\end{equation}
for $\cH^{N-1}$-a.e. $x_0 \in S(u)\cap S(\chi)$,
and
\begin{equation}
\label{1269}
\frac{d\mathcal{F}_{\odsd}( \chi, u, G)}{d \cH^{N-1}\lfloor S(\chi)}(x_0) \geq  |D \chi|(x_0),
\end{equation}
for $\cH^{N-1}$-a.e. $x_0 \in S(\chi)\setminus S(u)$.

Let $U \subset \cO(\Omega)$, open and let $(\chi_n, u_n)$ be an admissible sequence for the definition of $\mathcal{F}_{\odsd}( \chi, u, G)(U)$, such that, for $\eta > 0$ fixed,
\begin{equation}\label{measure}
\begin{split} \eta + \mathcal{F}_{\odsd}( \chi, u, G)(U) \geq& \lim_n \left \{\int_U f(\chi_n, \nabla u_n)\; dx \right.\\
& \left. + \int_{U \cap S(\chi_n, u_n) }g (\chi_n^+, \chi_n^-,u_n^+, u_n^-,  \nu(\chi_n, u_n))\, \de \cH^{N-1} + |D\chi_n|(U) \right\}\\
=& \lim_n \mu_n(U),
\end{split}
\end{equation}
where $\mu_n$ is a bounded sequence of Radon measures, such that, upon a choice of a (non-relabelled) subsequence, $\mu_n \wsto \mu$.
We divide the proof in three parts according to the choice of $x_0.$
First consider $x_0 \in U \cap \left(S(u)\setminus S(\chi)\right)$. In this case, we prove \eqref{673}, taking into account the sequential characterization of $\gamma_{\tiny{\text{sd}}}$ (see Remark \ref{918a}).

The desired lower bound follows from proving that 
\begin{equation*} \frac{d\mu}{d\cH^{N-1}}(x_0) \geq \gamma_{\sd}( \chi(x_0),[u](x_0), \nu(u)(x_0)),\end{equation*}
for $\cH^{N-1}$-a.e. $x_0 \in S(u)\setminus S(\chi)$ and by letting $\eta \to 0$.
Choose a sequence of radii $\delta_k \to 0$ such that $\mu(\partial Q_\nu(x_0, \delta_k)) = 0, \; \forall k \in \N{}$.
Then we have that
\begin{eqnarray*} \frac{d\mu}{d\cH^{N-1}}(x_0) & \geq & \lim_{k,n}\frac{1}{\delta_k^{N-1}} \int_{Q_\nu(x_0, \delta_k) \cap S(u_n)\setminus S(\chi_n)} g_1 ( \chi_n,[u_n], \nu(u_n))\, \de \cH^{N-1}\\\\
&& = \lim_{k,n} \int_{Q_\nu \cap \frac{S(u_n)\setminus S(\chi_n) - x_0}{\delta_k}} g_1( \chi_n(x_0 + \delta_ky),[u_n](x_0 + \delta_ky), \nu(u_n(x_0+\delta_k y))\, \de\cH^{N-1},
\end{eqnarray*}
where $\nu := \nu(u)(x_0).$
Define
\begin{equation}
\label{1664}
\chi_{n,k}(y) := \chi_n(x_0 + \delta_ky),
\end{equation} and
\begin{equation}
\label{1668}
u_{n,k}(y) := u_n(x_0+ \delta_ky) - u^-(x_0).
\end{equation}

The point $x_0 \in S(u)\setminus S(\chi)$ is to be chosen $\cH^{N-1}$-a.e so that
\begin{equation}\label{710} \lim_{k,n} \|\chi_{n,k}(y) - \chi(x_0)\|_{L^1(Q_\nu)} = 0,\end{equation}
 and
\begin{equation}\label{708} \lim_{k,n} \|u_{k,n}(y) - u_{\lambda, \nu}\|_{L^1(Q_\nu; \R d)} = 0,\end{equation}
where $\lambda= |[u]|(x_0)$, $\nu$ is as defined above, and $u_{\lambda,\nu}$ is defined according to \eqref{vlambdanu}.
Following arguments in \cite{CF}, we get a diagonalizing sequence $v_k := u_{n(k),k}$ such that
$$ v_k \to u_{\lambda, \nu} \; \text{in}\, L^1 \; \; \text{and}\; \; \nabla v_k \wto 0 \text{ in } L^p.$$
Let $\tilde{\chi}_k : = \chi_{n(k), k}$. Next we change slightly this sequence in order to fix $\chi(x_0)$. For that, we need to prove that 

$$\lim_{k\to\infty} \int_{\{y \in Q_\nu: |\tilde{\chi}_k(y) - \chi(x_0)| \neq 0\}\cap S(v_k)\setminus S(\tilde{\chi}_k) \}} g_1(\tilde{\chi}_k(y), [v_k](y),  \nu(v_k))\, \de \cH^{N-1}=0,$$
where $g_1(i,\cdot,\cdot)=g_1^i(\cdot,\cdot)$ is as in \eqref{g}. By $(H_2)$, it is enough to prove that
$$\lim_{k\to\infty} \int_{\{y \in Q_\nu: |\tilde{\chi}_k(y) - \chi(x_0)| \neq 0\}\cap S(v_k)\setminus S(\tilde{\chi}_k) \}} | [v_k] |(y)\, \de \cH^{N-1}.$$

Similarly to the lower bound bulk, this is achieved by changing $v_k$.

Define 
$$ \tilde{v}_k := \left\{ \begin{array}{ll}
v_k & \text{in}\; Q_\nu \cap \{|\tilde{\chi}_k(y) - \chi(x_0)| = 0\},\\
u_{\lambda,\nu}& \text{otherwise}\\
\end{array}
\right.
$$
Denoting $F_k :=  \{|\tilde{\chi}_k(y) - \chi(x_0)| = 0\}$, we have that (see the proof of lower bound inequality for bulk) $\mathcal{L}^N (Q_\nu\setminus F_k) \to 0$ and so, we still have that $\tilde{v}_k \to u_{\lambda, \nu}$ in $L^1$. Moreover we clearly still have that, $\nabla \tilde{v}_k \wto 0$ in $L^p$.

Then, taking into account the definition of $\tilde{v}_k$, we have that
\begin{equation*}
\begin{split} 
\frac{d\mu}{d\mathcal {H}^{N-1}}(x_0) \geq & \lim_{k\to\infty} \int_{F_k \cap S(v_k)\setminus S(\tilde{\chi}_k)} g_1 (\chi(x_0),[v_k],  \nu(v_k))\, \de \cH^{N-1}\\
\geq &\lim_{k\to\infty} \int_{F_k \cap S(\tilde{v}_k)\setminus S(\tilde{\chi}_k)} g_1 ( \chi(x_0),[\tilde{v}_k], \nu(\tilde{v}_k))\, \de \cH^{N-1}\\
= &\lim_{k\to\infty} \int_{Q_\nu \cap S(\tilde{v}_k)\setminus S(\tilde{\chi}_k)} g_1 ( \chi(x_0), [\tilde{v}_k],\nu(\tilde{v}_k))\, \de \cH^{N-1} - \lim_{k\to\infty} \int_{(Q_\nu\setminus F_k) \cap S(\tilde{v}_k)\setminus S(\tilde{\chi}_k)} g_1 ( \chi(x_0),[\tilde{v}_k], \nu(\tilde{v}_k))\, \de \cH^{N-1}\\\\
\geq & \lim_{k\to\infty} \int_{Q_\nu \cap S(\tilde{v}_k)\setminus S(\tilde{\chi}_k)} g_1 ( \chi(x_0),[\tilde{v}_k], \nu(\tilde{v}_k))\, \de \cH^{N-1} -\lim_{k\to\infty} \int_{Q_\nu \cap \{ y\cdot\nu = 0\} \cap \{ |\tilde{\chi}_k(y) - \chi(x_0)| = 1\}}\lambda \, \de \cH^{N-1}\\\\
=& T_1 - T_2.
\end{split}
\end{equation*}

For the sake of illustration, we control separately the terms $T_1$ and $T_2$.
Control of $T_2$: setting $y = (\tilde{y}, y_\nu)$
$$ T_2 \leq \int_{Q_\nu \cap  \{ y\cdot\nu = 0\}} |\tilde{\chi}_k(y) - \chi(x_0)| \, \de y_\nu = \lambda \int \int |\tilde{\chi}_k(y) - \chi(x_0)| \, \de y_\nu \de \tilde{y} = \lambda \int_{Q_\nu} |\tilde{\chi}_k(y) - \chi(x_0)| \, \de y \to 0,$$ by \eqref{710}.
It remains to notice that, for what concerns $T_1$, similarly to \eqref{556}, we have that
\begin{equation*}%\label{745} 
\lim_{k\to\infty} \int_{Q_\nu \cap S(\tilde{v}_k)\setminus S(\tilde{\chi}_k)} g_1 ([\tilde{v}_k], \chi(x_0), \nu(\tilde{v}_k))\, \de \cH^{N-1} = \lim_{k\to\infty} \int_{Q_\nu \cap S(\tilde{v}_k)} g_1 ([\tilde{v}_k], \chi(x_0), \nu(\tilde{v}_k))\, \de \cH^{N-1},
\end{equation*}
and we conclude that
\begin{equation*}
\begin{split}
\frac{d\mu}{d\mathcal {H}^{N-1}}(x_0)  \geq &\lim_{k\to\infty} \int_{Q_\nu \cap S(\tilde{v}_k)\setminus S(\tilde{\chi}_k)} g_1 ([\tilde{v}_k], \chi(x_0), \nu(\tilde{v}_k))\, \de \cH^{N-1}\\
=& \lim_{k\to\infty} \int_{Q_\nu \cap S(\tilde{v}_k)} g_1 ([\tilde{v}_k], \chi(x_0), \nu(\tilde{v}_k))\, \de \cH^{N-1}
\end{split}
\end{equation*}
Since $\tilde{v}_k$ is admissible for the definition of $\tilde{\gamma}_{\tiny{\text{sd}}}$, \eqref{673} is proved.
We proceed now with the proof of \eqref{1268}
%\note{outline of the proof, expand}

Similarly to the proof of \eqref{673}, we start with an admissible sequence $(\chi_n,u_n)$ in the conditions of  \eqref{measure}.

The desired lower bound follows from proving that
\begin{equation*}
\frac{d\mu}{d\cH^{N-1}}(x_0) \geq \gamma( \chi^+(x_0),\chi^-(x_0), u^+(x_0), u^-(x_0), \nu(\chi, u)(x_0))
\end{equation*}
for $\cH^{N-1}$-a.e. $x_0 \in S(\chi, u).$
Let $\nu := \nu(\chi, u)(x_0)$ and choose a sequence of radii $\delta_k \to 0$ such that $\mu(\partial Q_\nu(x_0, \delta_k)) = 0, \; \forall k \in \N{}$.
Then we have that
\begin{eqnarray*} \frac{d\mu}{d\cH^{N-1}}(x_0) & \geq & \lim_{k,n}\frac{1}{\delta_k^{N-1}} \int_{Q_\nu(x_0, \delta_k) \cap (S(\chi_n, u_n))} g (u_n^+, u_n^-, \chi_n^+, \chi_n^-, \nu(\chi_n, u_n))\, \de \cH^{N-1} + |D\chi_n|(Q_\nu(x_0, \delta_k)\\\\
&& \geq \lim_{k,n} \frac{1}{\delta_k^{N-1}} \int_{Q_\nu(x_0, \delta_k) \cap (S( \chi_n, u_n))} g (u_n^+, u_n^-, \chi_n^+, \chi_n^-, \nu(\chi_n, u_n))\, \de \cH^{N-1} + |D\chi|(Q_\nu).\\
\end{eqnarray*}
Define $\chi_{n,k}$ and $u_{n,k}$ as in \eqref{1664} and \eqref{1668}.
%$$u_{n,k}(y) := u_n(x_0+ \delta_ky) - u^-(x_0),$$
%and$$ \chi_{n,k}(y) := \chi_n(x_0 + \delta_ky).$$

The point $x_0 \in S(u)\cap S(\chi)$ is to be chosen $\cH^{N-1}$-a.e. so that
\eqref{708} and 
\begin{equation*}%\label{1371}
\lim_{k,n} \|\chi_{k,n}(y) - \chi_{a, b, \nu}\|_{L^1(Q_\nu)} = 0,
\end{equation*}
hold, with $a = \chi^+(x_0)$ and $b = \chi^-(x_0).$

%\begin{equation}\label{708bis} \lim_{k,n} \|u_{k,n}(y) - u_{\lambda, \nu}\|_{L^1(Q_\nu)} = 0,\end{equation}
%with $\lambda = u^+(x_0) - u^-(x_0)$ and

We have that
\begin{equation*}%\label{1375}
\begin{split}
\frac{d\mu}{d\cH^{N-1}}(x_0)& \geq \lim_{k,n}\frac{1}{\delta_k^{N-1}} \int_{Q_\nu(x_0, \delta_k) \cap (S( \chi_n, u_n))} g (u_n^+, u_n^-, \chi_n^+, \chi_n^-, \nu(\chi_n, u_n))\, \de \cH^{N-1} + |D\chi|(Q_\nu)\\
& = \lim_{k,n} \int_{Q_\nu \cap \{ x_0 + \delta_ky\} \in S(u_n)\cap\{ \chi_n (x_0 + \delta_k y )= 1\}} g_1^1([u_n], \nu(u_n))\, \de \cH^{N-1}\\
& + \lim_{k,n} \int_{Q_\nu \cap \{ x_0 + \delta_ky\} \in S(u_n)\cap\{ \chi_n (x_0 + \delta_k y )=0 \}} g_1^0([u_n], \nu(u_n))\, \de \cH^{N-1}\\
& + \lim_{k,n} \int_{Q_\nu \cap \{ x_0 + \delta_ky\} \in S(u_n)\setminus S(\chi_n)}g_2(u_n^+, u_n^-, \chi_n^+, \chi_n^-, \nu(u_n)), \de \cH^{N-1}\\
& + |D\chi|(Q_\nu).
\end{split}
\end{equation*}
The result follows along the lines of what was done for $\gamma_{sd}$, upon finding diagonalizing sequences $v_k$ and $\chi_k$ admissible for $\tilde{\gamma}( \chi^+(x_0),\chi^-(x_0), u^+(x_0), u^-(x_0), \nu(\chi, u)(x_0))$ and relying on Lemma \ref{680bis}.

Finally, for $x_0 \in S(u)\cap S(\chi)$, the proof of \eqref{1269} is an immediate consequence of the lower semicontinuity of $|D \chi|$ and of the following trivial chain of inequalities which holds for every $U \in \cO(\Omega)$:
\begin{equation*}
\begin{split} 
\mathcal{F}_{\text{od-sd}}( \chi, u, G)(U) \geq& \lim_n \bigg \{\int_U f(\chi_n, \nabla u_n)\; dx + \int_{U \cap S(\chi_n, u_n) }g (\chi_n^+, \chi_n^-,u_n^+, u_n^-,  \nu_n)\, \de \cH^{N-1} + |D\chi_n|(U) \bigg\}\\
\geq &\lim_n |D \chi_n|(U).
\end{split}
\end{equation*}

%\note{\begin{remark} The computations for $\ \gamma_{\tiny{\text{od}}} $ are obvious, the lower bound coming from lower semicontinuity of the total variation, and the upper bound derived similarly to $ \gamma_{\tiny{\text{od-sd}}} $.
%\end{remark}
%}

\subsection{Upper bound}\label{ub}
%\begin{enumerate}

\subsubsection{Bulk}

We assume first that $u \in L^\infty(\Omega;\R d)$.

For $\mathcal{L}^N$-a.e. $x_0 \in \Omega$ we have that:

$$ \frac{d \cF_{\text{od-sd}}( \chi, u, G)}{d \mathcal{L}^N}(x_0) = \lim_{\delta \to 0} \frac{1}{\delta^N} { \cF}_{\text{od-sd}}( \chi, u, G, Q(x_0, \delta)).$$

The point $x_0$ is taken  such that $\frac{d\mathcal{F}_{\text{od-sd}}}{d \mathcal{L}^N} (x_0)$  exists and 
\eqref{440quater}, \eqref{440bis}, \eqref{440ter}, and \eqref{364} hold.

%\begin{equation} \label{374} \frac{d |D^su|}{d\mathcal{L}^N}(x_0) = 0.
%\end{equation}
%Assume further that $x_0$ is chosen ($\mathcal{L}^N$ a.e.) such that $\chi(x_0)$ the  approximate limit )is either equal to $0$ or $1$. 

In what follows assume that $\chi(x_0) = 1$, the case $\chi(x_0)=0$ is handled similarly. 
Let $\rho > 0$ small enough and let $w \in SBV(Q; \R{d})\cap L^\infty(Q;\R{d}), \; w\lfloor_{\partial Q} = \nabla u(x_0)x, \; \int_Q \nabla w \, \de x = G(x_0), \; |\nabla w| \in L^p(Q)$ such that

\begin{equation}\label{357}
H(1, u(x_0), G(x_0)) + \rho \geq \int_Q W_1( \nabla w) \, \de x + \int_{Q\cap S(w) } g^1_1( [w], \nu(w))\, \de \cH^{N-1}.
\end{equation}
Note that, due to Lemma \ref{385}, which obviously has an equivalent form in terms of functions defined in the unit cube, we can take 
$w \in SBV(Q; \R{d})\cap L^\infty(Q;\R{d})$ in \eqref{357}.

We construct now admissible sequences $(\chi_{n,\delta},  u_{n, \delta})$ for $\mathcal{F}_{\text{od-sd}}( \chi, u, G, Q(x_0, \delta))$. We take $\chi_{n,\delta} \equiv \chi$ and rely in  \eqref{357} to define $u_{n, \delta}$.

Let $\zeta (x) = w(x) -\nabla u(x_0)x$. Then $\zeta\lfloor_{\partial Q} = 0.$ Extend $\zeta$ by periodicity to all of $\R{N}$.

For each $\delta$, let $\eta_\delta\in SBV(Q(x_0,\delta);\R d)$ be given by Theorem \ref{Al} and such that
\begin{equation*}  \nabla \eta_\delta = G(x) - G(x_0) + \nabla u(x_0) - \nabla u(x) \hbox{ for }{\mathcal L}^N \hbox{-a.e. }x \in Q,
\end{equation*}
\begin{equation}
\label{372bis}
|D \eta_\delta|(Q(x_0,\delta)) \leq C(N)\int_{Q(x_0,\delta)} |G(x)-G(x_0)|+ |\nabla u(x)-\nabla u(x_0)|dx, 
\end{equation}
Moreover, by Remark \ref{Alf}
\begin{equation*}%\label{388} 
|| \eta_\delta||_{L^1(Q(x_0, \delta); \R{d})} \leq C ||G(x) - G(x_0) + \nabla u(x_0) - \nabla u(x)||_{L^1(Q(x_0, \delta); \R{d})}.\end{equation*}

By Lemma \ref{ctap}, for each $\delta$, let $\eta_{n, \delta}$ piecewise constant and such that $\eta_{n, \delta} \to -\eta_\delta$ in $L^1(Q(x_0,\delta);\R d)$ as $n \to \infty$. Moreover,
\begin{equation} \label{384}
|D\eta_{n, \delta}|(Q(x_0, \delta)) \to |D\eta_\delta|(Q(x_0, \delta)), \; \; \text{as }\; n \to \infty.
\end{equation}

Define:
\begin{equation}\label{382}
u_{n, \delta}(x) = u(x) + \frac{\delta}{n} \zeta \left( \frac{n(x - x_0)}{\delta}\right) + \eta_\delta + \eta_{n, \delta}.\end{equation}
For fixed $\delta$ it is clear that $u_{n, \delta} \to u \; \; \text{in}\; L^1.$ Moreover,
\begin{equation}
\label{undelta}
\nabla u_{n,\delta}(x)= \nabla \zeta\left(\frac{n(x-x_0)}{\delta}\right)+G(x)-G(x_0)+\nabla u(x_0).
\end{equation}

\noindent By Riemann-Lebesgue Lemma, for fixed $\delta$ and as $n \to\infty$
\begin{equation*} \nabla  \zeta\left( \frac{n(x - x_0)}{\delta}\right) \wto G(x_0) - \nabla u(x_0) \hbox{ in }L^p,\end{equation*}
 and hence, by \eqref{undelta} we also have that
$$ \nabla u_{n, \delta} \wto G(x) \hbox{ in }L^p,$$
that is, $u_{n, \delta}$ is admissible for  $\mathcal{F}_{\text{od-sd}}( \chi, u, G, Q(x_0, \delta))$.
%For $\delta < \delta_0$ small enough we can assume that $\mathcal{L}^N(Q(x_0, \delta) \cap \{ x \in \Omega: \chi(x) = 0\}) = 0.$ (\note{true?}).

Then, 
\begin{eqnarray*} \frac{d\mathcal{F}_{\text{od-sd}}( \chi, u, G)}{d\mathcal{L}^N} (x_0)& \leq& \lim_{\delta,n} \frac{1}{\delta^N} \left\{ \int_{Q(x_0, \delta)} f( \chi,\nabla u_{n, \delta})\, \de x \right. \\\\
&& \left. + \int_{  Q(x_0,\delta) \cap S(\chi, u_{n, \delta})} g(\chi^+, \chi^-, u_{n, \delta}^+, u_{n, \delta}^-,  \nu( \chi_{n, \delta},u_{n, \delta}))\, \de\cH^{N-1} + |D\chi|(Q(x_0, \delta))\right\} \\\\
&=&   \lim_{\delta,n} \frac{1}{\delta^N} \left\{ \int_{Q(x_0, \delta)} f( \chi,\nabla u_{n, \delta})\, \de x + \int_{Q(x_0, \delta) \cap S(u_{n, \delta})\setminus S(\chi)} g_1 (\chi, [u_{n,\delta}], \nu(u_{n, \delta}))\, \de \cH^{N-1}\right.\\\\
&& \left. + \int_{Q(x_0, \delta) \cap S(u_{n, \delta})\cap S(\chi)} g_2(\chi^+, \chi^-,u_{n, \delta}^+, u_{n, \delta}^-,  \nu( \chi_{n, \delta},u_{n, \delta}))\, \de\cH^{N-1} + |D\chi|(Q(x_0, \delta))\right\}\\\\
&=&  \lim_{\delta,n} \frac{1}{\delta^N} \left\{ \int_{Q(x_0, \delta)\cap \{ \chi = 1\}} f( 1,\nabla u_{n, \delta})\, \de x +  \int_{Q(x_0, \delta)\cap \{ \chi = 0\}} f( 0,\nabla u_{n, \delta})\, \de x\right.\\\\
&& +\int_{Q(x_0, \delta)\cap \{\chi = 1 \}\cap S(u_n, \delta)} g_1^1 ([u_{n,\delta}],  \nu(u_{n, \delta}))\, \de \cH^{N-1} \\\\
&& +\int_{Q(x_0, \delta)\cap \{\chi = 0 \}\cap S(u_n, \delta)} g_1^0 ([u_{n,\delta}],  \nu(u_{n, \delta}))\, \de \cH^{N-1}\\\\
&& \left. + \int_{Q(x_0, \delta) \cap S(u_{n, \delta})\cap S(\chi)} g_2(\chi^+, \chi^-,u_{n, \delta}^+, u_{n, \delta}^-,   \nu( \chi_{n, \delta},u_{n, \delta}))\, \de\cH^{N-1} + |D\chi|(Q(x_0, \delta))\right\}.\\\\
%&  \leq & \lim_{\delta,n} \left\{ \frac{1}{\delta^N} \int_{Q(x_0, \delta)} W_1\left( \nabla u(x_0) + \nabla \zeta \left( \frac{n(x - x_0)}{\delta}\right)\right)\, \de x \right. \\\\
%&+ &\hspace{-0.3cm}\lim_{\delta, n} \frac{C}{\delta^N}\hspace{-0.2cm}\int_{Q(x_0,\delta)} \hspace{-0.4cm}(|\nabla u(x) \hspace{-0.1cm}-\hspace{-0.1cm} \nabla u(x_0)| \hspace{-0.1cm}+\hspace{-0.1cm} |G(x) - G(x_0)|)( 1 \hspace{-0.1cm}+\hspace{-0.1cm} |\nabla u(x)|^{p-1} \hspace{-0.1cm}+ |\nabla u(x_0)|^{p-1})\de x\\\\
%&+& \lim_{\delta, n} \frac{C}{\delta^N}\hspace{-0.2cm}\int_{Q(x_0,\delta)} \hspace{-0.4cm}(|\nabla u(x) \hspace{-0.1cm}-\hspace{-0.1cm} \nabla u(x_0)| + |G(x) - G(x_0)|)(|G(x)|^{p-1}\hspace{-0.1cm} + \hspace{-0.1cm}|G(x_0)|^{p-1}) \de x\\\\
%& + &\int_{ S(u_{n, \delta})\cap Q(x_0,\delta)\setminus S(\chi)} g^1([u_{n, \delta}], \chi, \nu(u_{n, \delta})\, \de\cH^{N-1} \\\\
%& + & |D\chi|(Q(x_0, \delta))\\\\
%& + &\left.\int_{S(u_{n, \delta}) \cap S(\chi)} g_2 (u_{n, \delta}^+, u_{n, \delta}^- ,\nu(u_{n, \delta}))\, \de \cH^{N-1}\right\}\\\\
%&&= J_1 + J_2 + J_3 + J_4 + J_5 + J_6\\\\
 \end{eqnarray*}
The term $ \frac{1}{\delta^N}|D\chi|(Q(x_0, \delta)) \to 0$ by \eqref{440bis}, so we omit it in the following computations.

We address separately each one of the other terms.
\begin{equation}\label{estimatebulk1} 
\begin{split}
\lim_{\delta,n} \frac{1}{\delta^N} & \bigg\{ \int_{Q(x_0, \delta)\cap \{ \chi = 1\}} f( 1,\nabla u_{n, \delta})\, \de x  + \int_{Q(x_0, \delta)\cap \{ \chi = 0\}} f( 0,\nabla u_{n, \delta})\, \de x \bigg\}  \\ 
=& \lim_{\delta,n} \frac{1}{\delta^N}\bigg\{ \int_{Q(x_0, \delta)} f( 1,\nabla u_{n, \delta})\, \de x +\int_{Q(x_0, \delta)\cap \{ \chi = 0\}} f( 0,\nabla u_{n, \delta})\, \de x - f( 1,\nabla u_{n, \delta})\, \de x  \bigg\}\\ 
\leq & \lim_{\delta,n} \frac{1}{\delta^N} \bigg\{\int_{Q(x_0, \delta)} f( 1,\nabla u_{n, \delta})\, \de x  + 2\beta \int_{Q(x_0, \delta)\cap \{\chi = 0\}} ( 1 + |\nabla u_{n, \delta}|^p)\, \de x\bigg\},
\end{split}
\end{equation}
by $(H_1)$.
By \eqref{undelta} and, since by \eqref{440bis} we have that 
\begin{equation*}%\label{825}
\lim_{\delta \to 0} \frac{1}{\delta^N}\cL^{N}\left(Q(x_0, \delta)\cap \{ \chi(x) = 0\}\right) = 0,
\end{equation*}
together with \eqref{440quater}, we can control all terms in $\int_{Q(x_0, \delta)\cap \{\chi = 0\}} ( 1 + |\nabla u_{n, \delta}|^p)\, \de x$ but the one involving $\nabla \zeta\left( \frac{n(x-x_0)}{\delta}\right)$, which we address now.
We have that 
\begin{equation*}%\label{829}
\lim_{\delta,n} \frac{1}{\delta^N} \int_{Q(x_0, \delta)\cap \{\chi = 0\}}  \left|\nabla \zeta\left( \frac{n(x-x_0)}{\delta}\right)\right|^p\, \de x \to 0,
\end{equation*}
since
\begin{equation*}
\begin{split}
 \lim_{ \delta,n} \frac{1}{\delta^N}  \int_{Q(x_0, \delta)\cap \{\chi = 0\} } \left|\nabla \zeta\left( \frac{n(x-x_0)}{\delta}\right)\right|^p\, \de x %&\\
=& \lim_{\delta,n}\int_{Q(0,1)\cap \{\chi(x_0+ \delta y)=0\}} |\nabla \zeta(ny)|^p \de y  \\
=&\lim_{\delta \to 0}  \int_{\{\chi(x_0+\delta y)=0\}}\|\nabla \zeta\|^p_{L^p(Q(0,1))}=0, %& \\
\end{split}
\end{equation*}
by the Riemann-Lebesgue Lemma and since $\chi(x_0)=1$ and \eqref{440quater} holds.
Thus we can conclude that 
\begin{equation}
\label{1+undelta}
\lim_{\delta,n}\frac{1}{\delta^N}\int_{Q(x_0, \delta)\cap \{\chi = 0\}} ( 1 + |\nabla u_{n, \delta}|^p)\, \de x=0.
\end{equation}
Hence, by \eqref{undelta}, \eqref{1+undelta}, %\eqref{297} 
and \eqref{estimatebulk1}, it remains to check the following estimate.
%\begin{equation} \label{516}
%\begin{split}
%\lim_{\delta,n} \frac{1}{\delta^N} \left\{ \int_{Q(x_0, \delta)\cap \{ \chi = 1\}} f( 1,\nabla u_{n, \delta})\, \de x  \right. & + \left. \int_{Q(x_0, \delta)\cap \{ \chi = 0\}} f( 0,\nabla u_{n, \delta})\, \de x \right\} \\
%\leq &\lim_{\delta,n} \frac{1}{\delta^N}  \int_{Q(x_0, \delta)}f( 1,\nabla u_{n, \delta})\, \de x \\
%\leq &\lim_{\delta, n}\frac{1}{\delta^N} \left\{ \int_{Q(x_0, \delta)\cap \{ \chi = 1\}} f( 1, \left ( \nabla u(x_0) +  \nabla \zeta\left( \frac{n(x-x_0)}{\delta}\right)\right)\, \de x \right. \\
% + &C \int_{Q(x_0, \delta)} ( | \nabla u(x) - \nabla u(x_0)| + |G(x) - G(x_0)|)(1 + |\nabla u(x)|^{p-1} + |\nabla u(x_0)|^{p-1})\, \de x\\
%+ &\left.C\int_{Q(x_0, \delta)} ( | \nabla u(x) - \nabla u(x_0)| + |G(x) - G(x_0)|)(|G(x)|^{p-1} + |G(x_0)|^{p-1}))\, \de x\right\},
%\end{split}
%\end{equation}
%and the last two terms are controlled by \eqref{364}.
Using $(H_1)$, we have
%\textcolor{red}{Nothing is going to change, but don't we have just the following estimate?}
\begin{equation*}%\label{1917}
\begin{split}
%\lim_{\delta,n} \frac{1}{\delta^N} \left\{ \int_{Q(x_0, \delta)\cap \{ \chi = 1\}} f( 1,\nabla u_{n, \delta})\, \de x  \right. & + \left. \int_{Q(x_0, \delta)\cap \{ \chi = 0\}} f( 0,\nabla u_{n, \delta})\, \de x \right\} \\
&\lim_{\delta,n} \frac{1}{\delta^N}  \int_{Q(x_0, \delta)}f( 1,\nabla u_{n, \delta})\, \de x \\
\leq &\lim_{\delta, n}\frac{1}{\delta^N} \left\{ \int_{Q(x_0, \delta)\cap \{ \chi = 1\}} f( 1, \left ( \nabla u(x_0) +  \nabla \zeta\left( \frac{n(x-x_0)}{\delta}\right)\right)\, \de x \right. \\
% + &C \int_{Q(x_0, \delta)} ( | \nabla u(x) - \nabla u(x_0)| + |G(x) - G(x_0)|)(1 + |\nabla u(x)|^{p-1} + |\nabla u(x_0)|^{p-1})\, \de x\\
+ &\left.C\int_{Q(x_0, \delta)} ( |G(x) - G(x_0)|)(|G(x)|^{p-1} + |G(x_0)|^{p-1})+1)\, \de x\right\}.
\end{split}
\end{equation*}
The last term is controlled by \eqref{364}; regarding the remaining term, upon a change of variables and taking into account the periodicity of $\zeta$, we have that
\begin{equation*}%\label{867}
\begin{split}
\lim_{\delta,n}  \frac{1}{\delta^N} & \int_{Q(x_0, \delta)} f \bigg( 1, \nabla u(x_0) + \nabla \zeta \Big( \frac{n(x - x_0)}{\delta}\Big)\bigg)\, \de x = \lim_{\delta, n}\frac{1}{n^N} \int_{nQ}f(1(,\nabla u(x_0) + \nabla \zeta(y))\, \de y \\
 & = \int_Q f(1, \nabla u(x_0) + \nabla \zeta(y))\, \de y = \int_Q f(1, \nabla w(y))\, \de y.\\
\end{split}
\end{equation*}
By $(H_2)$ we have
%We address now the term
\begin{equation}\label{852}
\begin{split}
&\lim_{\delta,n} \frac{1}{\delta^N} \left\{ \int_{ Q(x_0, \delta)\cap \{\chi = 1 \}\cap S(u_{n, \delta})} g_1^1 ([u_{n,\delta}], \nu(u_{n, \delta}))\, \de \cH^{N-1}\right. \\
&+\left.\int_{Q(x_0, \delta)\cap \{\chi = 0 \}\cap S(u_{n, \delta})} g_1^0 ([u_{n,\delta}], \nu(u_{n, \delta}))\, \de \cH^{N-1}\right\}\\
%&\lim_{\delta,n} \frac{1}{\delta^N} \left\{ \int_{S(u_{n, \delta})\cap Q(x_0, \delta)\cap \{\chi = 1 \}} g_1^1 ([u_{n,\delta}], \nu(u_{n, \delta}))\, \de \cH^{N-1}\right. \\
%&+\left.\int_{S(u_{n, \delta})\setminus \cap Q(x_0, \delta)\cap \{\chi = 0 \}} g_1^0 ([u_{n,\delta}],  \nu(u_{n, \delta}))\, \de \cH^{N-1}\right\}\\
& = \lim_{\delta,n} \frac{1}{\delta^N} \left\{ \int_{Q(x_0, \delta) \cap (S(u_{n, \delta})\setminus S(\chi))} g_1^1 ([u_{n,\delta}],  \nu(u_{n, \delta}))\, \de \cH^{N-1}\right.\\
& + \left. \int_{Q(x_0, \delta)\cap \{\chi = 0 \}\cap (S(u_{n, \delta})\setminus S(\chi))} g_1^0 ([u_{n,\delta}], \nu(u_{n, \delta})) -  g_1^1 ([u_{n,\delta}], \nu(u_{n, \delta}))\, \de \cH^{N-1}\right\}\\
& \leq  \lim_{\delta,n} \frac{1}{\delta^N} \left\{\int_{ Q(x_0, \delta) \cap S(u_{n, \delta})\setminus S(\chi)} g_1^1 ([u_{n,\delta}], \nu(u_{n, \delta}))\, \de \cH^{N-1}\right.\\
& + \left. C\int_{Q(x_0, \delta)\cap \{\chi = 0 \}\cap S(u_{n, \delta})} |[u_{n, \delta}]| \, \de \cH^{N-1} \right\},
\end{split}
\end{equation}
and by \eqref{382},
$$ |[u_{n, \delta}]|(x) \leq |[u]|(x) + \frac{\delta}{n}|[\zeta]|\left(\frac{n(x-x_0)}{\delta}\right) + |[\eta_\delta]|(x) + |[\eta_{n,\delta}]|(x).$$
The term
\begin{equation}\label{858}\lim_{\delta, n}\frac{1}{\delta^N} \int_{Q(x_0, \delta)\cap \{\chi = 0 \}\cap S(u)} |[u]| \, \de \cH^{N-1}\end{equation}
is controlled by \eqref{440bis}, while the term
\begin{equation}\label{860}\lim_{\delta, n}\frac{1}{\delta^N} \int_{ Q(x_0, \delta)\cap \{\chi = 0 \}\cap S(\eta_\delta)} |[\eta_\delta]| \, \de \cH^{N-1}\end{equation}
is controlled by \eqref{372bis} and \eqref{364}. 
The control of the term
\begin{equation}\label{863}\lim_{\delta, n}\frac{1}{\delta^N} \int_{Q(x_0, \delta)\cap \{\chi = 0 \}\cap S(\eta_{n, \delta}} |[\eta_{n,\delta}]| \, \de \cH^{N-1}\end{equation}
is similar, taking into account \eqref{384}.
Finally, we have
\begin{equation}\label{881}
\begin{split}
&\lim_{\delta, n}\frac{1}{\delta^N} \int_{Q(x_0, \delta)\cap \{\chi = 0 \}\cap \{ \frac{n(x-x_0)}{\delta} \in S(\zeta)\}} \frac{\delta}{n}|[\zeta]|\left(\frac{n(x-x_0)}{\delta}\right) \, \de \cH^{N-1}(x)\\
&=\lim_{\delta, n}\frac{1}{\delta^N} \int_{Q(x_0, \delta)\cap \{\chi = 0 \}\cap \{ \frac{n(x-x_0)}{\delta }\in S(w)\}}\frac{\delta}{n} |[w]|\left(\frac{n(x-x_0)}{\delta}\right) \, \de \cH^{N-1}(x)\\
& = \lim_{\delta, n}\frac{1}{\delta^N}\frac{\delta}{n}\frac{\delta^{N-1}}{n^{N-1}}\int_{nQ  \cap \{\chi(x_0 + \frac{\delta}{n}y) = 0\}\cap S(w)} |[w]|\, \de \cH^{N-1}(y)\\
& =  \lim_{\delta, n}\frac{1}{\delta^N}\frac{\delta}{n}\frac{\delta^{N-1}}{n^{N-1}}n^N \int_{Q\cap \{\chi(x_0 + \frac{\delta}{n}y) = 0\}\cap S(w)} |[w]|(y)\, \de \cH^{N-1}(y)\\
& = \lim_{\delta, n} \int_{Q\cap S(w)} |\chi(x_0 + \frac{\delta}{n}y) - \chi(x_0)| |[w]|(y)\, \de \cH^{N-1}(y) = 0,\\
\end{split}
\end{equation}
%by a change of variables and periodicity.
%This limit is zero by the Dominated convergence Theorem, as long as we can show that
%\begin{equation}\label{692} \lim_{\delta \to 0} |\chi(x_0 + \delta y)- \chi(x_0)| |[w(y)]| = 0,\end{equation}
%for $\cH^{N-1} y \in Q\cap S(w)\setminus \{ x_0 + \delta y \in S(\chi)\}.$
%Suppose that there exists $X \subset Q\cap S(w)$ such that $\cH^{N-1}(X) > 0$ for $y \in X$  we have that the set $\{  x_0 + \delta y \notin S(\chi)\}$ has positive Hausdorff measure and 
 since $|\chi(x_0 + \delta y) - \chi(x_0)| \to 0$ for $\cH^{N-1}$-a.e. $y \in S(w)$ (see \cite[Theorem 3.108]{AFP}).
%We can pick $\tilde{y}\in X$ and define $\tilde{\chi}: \R{}\to \{0,1\}$ through $\tilde{\chi}(\delta) = \chi(x_0 + \delta \tilde{y}).$ For $\cH^{N-1} \tilde{y} \in X$ we have that $\tilde{\chi} \in BV(\R{}; \{0,1\})$ and
%\begin{equation}\label{597} \lim_{t \to 0} \frac{1}{2t} \int_{-t}^t |\tilde{\chi}(t) - \tilde{\chi}(0)| \, \de t = \lim_{t \to 0} \frac{1}{2t} \int_{-t}^t |\chi(x_0 +t \tilde{y}) - \chi(x_0)| \, \de t = 0.\end{equation}
%Since the set of discontinuity points of $\tilde{\chi}$ is finite, for \eqref{597} to hold, there must exist $ t_0(y) $ such that $|\chi(x_0 +t \tilde{y}) - \chi(x_0)| = 0$ for $ t < t_0$.

%\note{I believe the argument above is true, but still \cite[Theorem 3.108]{AFP} is for dimension $N-1$ a.e.? Moreover  need $w \in L^\infty$, first.}
Therefore, going back to equation \eqref{852}, we have that
\begin{equation*}%\label{887}
\begin{split}
&\lim_{\delta,n} \frac{1}{\delta^N} \left\{ \int_{Q(x_0, \delta)\cap \{\chi = 1 \}\cap S(u_{n, \delta})} g_1^1 ([u_{n,\delta}], \nu(u_{n, \delta}))\, \de \cH^{N-1}\right. \\
&+\left.\int_{Q(x_0, \delta)\cap \{\chi = 0 \}\cap S(u_{n, \delta})} g_1^0 ([u_{n,\delta}],  \nu(u_{n, \delta}))\, \de \cH^{N-1}\right\}\\
& = \lim_{\delta,n} \frac{1}{\delta^N}\int_{Q(x_0, \delta) \cap (S(u_{n, \delta})\setminus S(\chi))} g_1^1 ([u_{n,\delta}],  \nu(u_{n, \delta}))\, \de \cH^{N-1} \\
& \leq \int_{Q(x_0, \delta) \cap (S(w)\setminus S(\chi))} g_1^1 ([w], \nu(w))\, \de \cH^{N-1}, \\
\end{split}
\end{equation*}
where the last equality follows from arguments similar to the ones used in \eqref{858}, \eqref{860}, \eqref{863}, \eqref{881}, and since $w \in L^\infty$.
%\note{It has to be $w$ inside the last integrand in the last line. Am I wrong? Here everything should de similar to Chocki-Fonseca. Also we have to use the homogeneity of $g^1_1$ and the definition of $\zeta(....)$, the periodicity and some change of variables.}

We still have to show that
$$\lim_{\delta, n} \frac{1}{\delta^N}\int_{Q(x_0, \delta) \cap (S(u_{n, \delta}) \cap S(\chi))} g_2 (\chi^+, \chi^-,u_{n, \delta}^+, u_{n, \delta}^- , \nu(u_{n, \delta}))\, \de \cH^{N-1} = 0.$$
By $(H_5)$, \eqref{382}, the fact that $ \frac{1}{\delta^N}|D\chi|(Q(x_0, \delta)) \to 0$, and by arguments that were used before, we just have to show that

$$ \lim_{\delta, n} \frac{1}{\delta^N} \int_{  Q(x_0, \delta) \cap \{ x_0 + \frac{\delta}{n}S(w)\} \cap S(\chi)}  |[w]|\left( \frac{n(x - x_0)}{\delta}\right) \, \de \cH^{N-1} = 0.$$

%\textcolor{red}{Observe that the same argument applies even if $g_2$ has a growth from above of the type $1+ |[u]|$ as it was before.}
Similarly to the previous calculations

\begin{eqnarray*} && \frac{1}{\delta^N} \int_{Q(x_0, \delta) \cap  \{ x_0 + \frac{\delta}{n}S(w)\} \cap S(\chi) }  \frac{\delta}{n} |[w]|\left(\frac{n(x-x_0)}{\delta}\right)\de \cH^{N-1}\\\\
&& = \frac{1}{\delta^N}\frac{\delta}{n} \frac{\delta^{N-1}}{n^{N-1}} \int_{nQ \cap \{ x_0 + \frac{\delta}{n}y \in S(\chi)\}\cap S(w)} | [w(y)]| )\, \de \cH^{N-1} = \frac{1}{n^N}{n^N} \int_{Q \cap \{ x_0 + \frac{\delta}{n}y \in S(\chi)\}\cap S(w)} |[w(y)]|\, \de \cH^{N-1}\\\\
&& \leq C\cH^{N-1}(\{x_0 + \frac{\delta}{n}y \in S(\chi)\}\cap Q(x_0, \delta))\to 0.\\
\end{eqnarray*}
The desired upper bound  follows from \eqref{357} by letting $\rho \to 0$.

\subsubsection{ Interfacial }

We want to show that for $\cH^{N-1}$-a.e. $x_0 \in S(\chi, u)$

\begin{equation}\nonumber
\frac{d \mathcal{F}_{\text{od-sd}}( \chi, u, G)}{d \cH^{N-1}\lfloor S(\chi, u)}(x_0) \leq \gamma(\chi^+, \chi^-, u^+, u^+, \nu(\chi, u))(x_0),
\end{equation}
which by virtue of Remark \ref{remgammaodsd} can be decomposed as follows:

- for $\cH^{N-1}$-a.e. $x_0 \in S(u)\setminus S(\chi)$
\begin{equation}\label{445}
\frac{d \mathcal{F}_{\text{od-sd}}( \chi, u, G)}{d \cH^{N-1}\lfloor S( u)}(x_0) \leq \gamma_{\text{sd}}( [u], \chi,\nu(u))(x_0),
\end{equation}
%and 
%\begin{equation}\label{470}
%\frac{d \mathcal{F}_{\text{od-sd}}( \chi, u, G)}{d \cH^{N-1}\lfloor S(\chi, u)}(x_0) \leq \gamma_{\tiny{\text{od-sd}}}( u^+, u^-, \chi,\nu)(x_0)
%\end{equation}
%otherwise.
%\\

- for $\cH^{N-1}$-a.e. $x_0 \in S(u)\cap S(\chi)$

\begin{equation}\label{879}
\frac{d \mathcal{F}_{\text{od-sd}}( \chi, u, G)}{d \cH^{N-1}\lfloor S(\chi, u)}(x_0) \leq \gamma(\chi^+, \chi^-, u^+, u^+, \nu(\chi, u))(x_0),
\end{equation}
and
\\
\vspace{0.2cm}

- for $\cH^{N-1}$-a.e. $x_0 \in S(\chi)\setminus S(u)$
\begin{equation}\label{879chi}
\frac{d \mathcal{F}_{\text{od-sd}}( \chi, u, G)}{d \cH^{N-1}\lfloor S(\chi)}(x_0) \leq   |D \chi|(x_0).
\end{equation}

Following an argument of \cite[Proposition 4.8]{AMT}, in view of the continuity properties of $\gamma$ proven in Lemma \ref{Lipgammaodsd} it suffices to consider the couple $(\chi,u)= (a\chi_E+b \chi_{\Omega \setminus E},c\chi_E+d\chi_{\Omega \setminus E})$, with $a, b \in \{0,1\}$, $c,d \in \R {d}$, and  $\chi_E$ the characteristic function of a set $E$ of finite perimeter.  We consider first the case where the set $E$ is a polyhedron and then any set of finite perimeter.\\

\paragraph{\textbf{$E$ polyhedral set}} %First one can assume $E$ to be a polyhedral set. 
Covering $\Omega$ via Besicovitch Theorem %, up to a set of Lebesgue null measure, 
with disjoint open cubes $Q_{\nu(x_i)}(x_i, \varepsilon_i)$, centered at points of approximate continuity for $\gamma(a,b,c,d,\nu(x))$ with respect to ${\mathcal H}^{N-1}\lfloor{S(\chi,u)}$, one can restrict the analysis to a single cube.  
Indeed, it is enough to prove the upper bound inequality for the case $\Omega = Q_\nu$, with $\nu = e_N$ and for $ u = u_{c, d, \nu}(x_0), c= u^+(x_0), d = u^-(x_0)$ and $\chi = \chi_{a,b, \nu}(x_0), $ with $a= \chi^+(x_0)$ and $b = \chi^-(x_0).$

%Let $x_0 \in S(\chi,u)$  satisfying
% \begin{equation}
% \label{thm2.4iiCF}
% \lim_{\varepsilon \to 0^+}\frac{1}{\varepsilon^N}\int_{\{y \in %Q_{\nu(x_0)}(x_0,\varepsilon):(y-x_0)\cdot \nu(x_0)> %0\}}|\chi(y)-\chi^+(x_0)|^{\tfrac{N}{N-1}}dy=0,
% \end{equation}
 
 % \begin{equation}
 % \label{thm2.4iiCF2}
 % \lim_{\varepsilon \to 0^+}\frac{1}{\varepsilon^N}\int_{\{y \in %Q_{\nu(x_0)}(x_0,\varepsilon):(y-x_0)\cdot \nu(x_0)< %0\}}|\chi(y)-\chi^-(x_0)|^{\tfrac{N}{N-1}}dy=0,
 % \end{equation}
 % \begin{equation}
 % \label{jump2.4CF}
 % \lim_{\varepsilon \to 0}\frac{|[\chi]|{\mathcal H}^{N-1}(S(\chi)\cap %Q_{\nu(x_0)})}{\varepsilon^{N-1}}=|[\chi(x_0)]|,
  
  %\end{equation}
 %Observe that the above are true for ${\mathcal H}^{N-1}$ a.e. $x_0 \in %S(\chi)$.
 
We start with the proof of \eqref{879}.
For $\rho > 0$ let $w \in SBV(Q; \R{d}), w|_{\partial Q}=  u_{c, d, \nu}$ satisfying $\int_Q \nabla w = 0$ and $\tilde{\chi} \in BV(Q; \{0,1\}), \; \tilde{\chi}|_{\partial Q} = \chi_{a, b, \nu}$ such that
\begin{equation*}%\label{482}
\begin{split}
\gamma(\chi^+, \chi^-,u^+, u^-, \nu(\chi, u))(x_0) + \rho &\geq \int_{Q \cap S(w)\cap S(\tilde{\chi})} g_2(\tilde{\chi}^+, \tilde{\chi}^-, w^+, w^-, \nu(\tilde{\chi}, w))\, \de \cH^{N-1}\\
 &+ \int_{Q\cap \{\tilde{\chi} = 1\} \cap S(w)} g_1^1( [w], \nu(w))\, \de \cH^{N-1} \\
 & + \int_{Q\cap \{\tilde{\chi} = 0\} \cap S(w)} g_1^0( [w], \nu (w))\, \de \cH^{N-1}\\
 & + |D\tilde{\chi}|(Q).
 \end{split}
\end{equation*}
For $\delta>0$ small enough, and $n\in\N{}$, define
\begin{eqnarray}
D_n(x_0,\delta) & := & Q(x_0,\delta)\cap\left\{x:\frac{|(x-x_0)\cdot e_N|}\delta<\frac1{2n}\right\}, \nonumber\\
Q^+(x_0,\delta) & := & Q(x_0,\delta)\cap\left\{x:\frac{(x-x_0)\cdot e_N}\delta>0\right\}, \nonumber\\
Q^-(x_0,\delta) & := & Q(x_0,\delta)\cap\left\{x:\frac{(x-x_0)\cdot e_N}\delta<0\right\}.\nonumber
\end{eqnarray}
Extend $w(\cdot, y_N)$ by $Q'$-periodicity ($Q' := \{ y \in Q : y_N = 0\}$) and construct the sequence
\begin{equation}\nonumber
w_{n,\delta}(x):=\begin{cases}
u^+(x_0) & x\in Q^+(x_0,\delta)\setminus D_n(x_0,\delta), \\
w\left(\frac{n(x-x_0)}{\delta}\right) & x\in D_n(x_0,\delta), \\
u^-(x_0) & x\in Q^-(x_0,\delta)\setminus D_n(x_0,\delta).
\end{cases}
\end{equation}
Notice that, arguing as in \cite[Theorem 4.4 -- Upper bound]{CF},   $w_{n,\delta}\stackrel{L^1}{\to} u_{u^+(x_0), u^-(x_0), \nu}$ and $\nabla w_{n, \delta} \wto 0$ in $L^p$ as $n\to\infty$.

Let now $h_\delta$ given by Theorem \ref{Al} be such that $\nabla h = G$  in $Q(x_0, \delta)$ and satisfying
\begin{equation}\label{507}|Dh_\delta|(Q(x_0, \delta)) \leq C(N) \int_{Q(x_0, \delta)} |G(x)|\; dx, \end{equation}
and let $h_{n,\delta}$ be a sequence of piecewise constant functions given by Lemma \ref{ctap} such that
$h_{n,\delta} \to - h_\delta$ in $L^1$ as $n \to \infty$ and
\begin{equation} \label{510}
|Dh_{n, \delta}|(Q(x_0, \delta)) \to |Dh_\delta|(Q(x_0, \delta)), \; \; \text{as }\; n \to \infty.
\end{equation} Define the sequence
\begin{equation}\label{493} u_{n, \delta} :=  w_{n,\delta} + h_\delta + h_{n, \delta}.\end{equation}

Similarly, extend $\tilde{\chi}$ by periodicity and define:
\begin{equation}\nonumber
\chi_{n,\delta}(x):=\begin{cases}
\chi^+(x_0) & x\in Q^+(x_0,\delta)\setminus D_n(x_0,\delta), \\
\tilde{\chi}\left(\frac{n(x-x_0)}{\delta}\right) & x\in D_n(x_0,\delta), \\
\chi^-(x_0) & x\in Q^-(x_0,\delta)\setminus D_n(x_0,\delta).
\end{cases}
\end{equation}

Clearly, for fixed $\delta$, the sequences $u_{n, \delta} $ is admissible for the definition of $\gamma(u^+, u^-, \chi^+,\chi^-, \nu(\chi, u))(x_0).$ 

Regarding $\chi_{n, \delta}$, we have that $\chi_{n, \delta} \to \chi_{a, b, \nu}(x_0)$ in $L^1$ as $n \to \infty$, and $|D\chi_{n, \delta}|$ is bounded uniformly with respect to $n$. Therefore, by Proposition 3.12 in \cite{AFP} we have that $\chi_{n,\delta} \wsto \chi_{a,b,e_N}$ in BV as $n\to \infty.$
%\begin{equation}\label{929}
%\begin{split}
% <D\chi_{n, \delta}, \phi > & = - < \chi_{n, \delta}, \nabla \phi > = - \int_{Q^+(x_0, \delta)\setminus D_n(x_0, \delta)} \nabla \phi\, \de x - \int_{D_n(x_0, \delta} \chi\left(\frac{n(x - x_0)}{\delta}\right)\phi(x)\, \de x \\
%& = - \int_{Q^+(x_0, \delta)\setminus D_n(x_0, \delta)} \nabla \phi\, \de x - \frac{\delta^N}{n^N}\int_{\{|y\cdot e_n| < \frac{1}{2}\}\cap nQ} \chi(y)\nabla \phi(x_0 + \frac{\delta}{n}y)\, \de y\\
%&& =  - \int_{Q^+(x_0, \delta)\setminus D_n(x_0, \delta)} \nabla \phi\, \de x - \frac{\delta^N}{n^N}n^{N-1}\int_{Q} \chi(y)\nabla \phi(x_0 + \frac{\delta}{n}y)\, \de y\\
%& \to < D\chi_{c,d,e_N}, \phi>\\
%\end{split}
%\end{equation}
%as $n \to \infty$. Hence, in the sense of distributions, $D\chi_{n,\delta} \wto D\chi_{c,d,e_N}.$ By density, we have the same for test functions $\phi \in C_0(Q(x_0, \delta)).$

Then we have that $u_{n, \delta} $ and $\chi_{n, \delta}$ are admissible for $\mathcal{F}_{\text{od-sd}}( \chi, u, G)(Q(x_0, \delta)$ and so, 
\begin{eqnarray*}
\frac{d \mathcal{F}_{\odsd}( \chi, u, G)}{d \cH^{N-1}}(x_0) &\leq& \lim_{\delta, n} \frac{1}{\delta^{N-1}} \left\{ \int_{Q(x_0, \delta)} f(\chi_{n, \delta},\nabla u_{n, \delta})\, \de x\right.\\
&& + \int_{Q^+(x_0, \delta) \cap S(\chi_{n, \delta},u_{n, \delta}) }g(\chi_{n, \delta}^+, \chi_{n,\delta}^-,u_{n, \delta}^+,u_{n, \delta}^-, \nu(\chi_{n, \delta},u_{n,\delta}))\, \de \cH^{N-1}\\
&& + |D\chi_{n, \delta}|(Q(x_0, \delta)) \Big\}\\
&& = L_1 + L_2 + L_3.\\
\end{eqnarray*}
As in \cite{CF}, the term $L_1$ is controlled by $(H_1)$ and choosing $x_0$ ($\cH^{N-1}$-a.e. in $S(\chi,u)$) so that
\begin{equation*}%\label{521}
\lim_{\delta \to 0} \frac{1}{\delta^{N-1}} \int_{Q(x_0, \delta)} |G|^p\, \de x = 0.
\end{equation*}
The term $L_3$, upon a change of variables gives trivially $|D\tilde{\chi}|(Q).$
It remains to control $L_2$. We have that

\begin{equation*}%\label{955} 
\begin{split}
L_2 =&   \lim_{\delta, n} \frac{1}{\delta^{N-1}}\int_{Q(x_0, \delta) \cap S(u_{n, \delta})\cap S(\chi_{n, \delta})  }g_2(\chi_{n, \delta}^+, \chi_{n,\delta}^-,u_{n, \delta}^+,u_{n, \delta}^-, \nu(\chi_{n, \delta},u_{n,\delta}))\, \de \cH^{N-1}\\
& + \lim_{\delta, n} \frac{1}{\delta^{N-1}}\int_{Q(x_0, \delta) \cap S(u_{n, \delta})\setminus S(\chi_{n, \delta}))  }g_1(\chi_{n, \delta},[u_{n, \delta}],  \nu(u_{n, \delta}))\, \de \cH^{N-1}\\
=& K_1 + K_2.\\
\end{split}
\end{equation*}
The term $K_2$ can be written as:

\begin{equation*}%\label{963}
\begin{split}
K_2 & =  \lim_{\delta, n} \frac{1}{\delta^{N-1}}\int_{Q(x_0, \delta) \cap \{ \chi_{n, \delta} = 1\} \cap (S(u_{n, \delta})\setminus S(\chi_{n, \delta}))  }g_1^1([u_{n, \delta}], \nu(u_{n, \delta}))\, \de \cH^{N-1}\\
& + \lim_{\delta, n} \frac{1}{\delta^{N-1}}\int_{Q(x_0, \delta) \cap \{ \chi_{n, \delta} = 0\} \cap (S(u_{n, \delta})\setminus S(\chi_{n, \delta})) }g_1^0([u_{n, \delta}], \nu(u_{n, \delta}))\, \de \cH^{N-1}\\
& = \lim_{\delta, n} \frac{1}{\delta^{N-1}}\int_{Q(x_0, \delta) \cap \{ \chi_{n, \delta} = 1\}  \cap (S(w_{n, \delta})\setminus S(\chi_{n, \delta})) }g_1^1([w_{n, \delta}], \nu(w_{n, \delta}))\, \de \cH^{N-1}\\
& + \lim_{\delta, n} \frac{1}{\delta^{N-1}}\int_{Q(x_0, \delta) \cap \{ \chi_{n, \delta} = 0\} \cap (S(w_{n, \delta})\setminus S(\chi_{n, \delta}))  }g_1^0([w_{n, \delta}], \nu(w_{n, \delta}))\, \de \cH^{N-1}\\
& = \lim_{\delta, n} \frac{1}{\delta^{N-1}}\int_{D_n(x_0, \delta) \cap \{ x_0 + \frac{\delta}{n} \in \tilde{\chi} = 1\}\cap \{x_0 + \frac{\delta}{n}S(w)\}}g_1^1\left(\left[w\left(\frac{n(x-x_0)}{\delta}\right)\right], \nu(w_{n, \delta})\right)\, \de \cH^{N-1}\\
& + \lim_{\delta, n} \frac{1}{\delta^{N-1}}\int_{D_n(x_0, \delta) \cap \{ x_0 + \frac{\delta}{n} \in \tilde{\chi} = 0\}\cap \{x_0 + \frac{\delta}{n}S(w)\}}g_1^0\left(\left[w\left(\frac{n(x-x_0)}{\delta}\right)\right], \nu(w_{n, \delta})\right)\, \de \cH^{N-1}\\
& = \int_{Q\cap \{ \tilde{\chi} = 1\}\cap S(w)} g_1^1([w], \nu(w))\, \de\cH^{N-1} + \int_{Q\cap \{ \tilde{\chi} = 0\}\cap S(w)} g_1^0([w], \nu(w))\, \de\cH^{N-1}.
\end{split}
\end{equation*}
by the definition of $u_{n, \delta}$ (see \eqref{493}), by $(H_2), (H_4)$ , by \eqref{510} and \eqref{507} and by a change of variables and periodicity of both $w$ and $\tilde{\chi}$.
It remains to control $K_1$. Writing for simplicity $\nu_{n,\delta} := \nu(\chi_{n, \delta},u_{n,\delta})$ in what follows, we have that:
\begin{equation}\nonumber
\begin{split}
K_1 = & \lim_{\delta, n} \frac{1}{\delta^{N-1}}\int _{Q(x_0, \delta)\cap S(u_{n, \delta})\cap S(\chi_{n, \delta})} g_2 (\chi_{n, \delta}^+,\chi_{n, \delta}^-,u_{n, \delta}^+, u_{n, \delta}^-,  \nu_{n,\delta})\, \de \cH^{N-1}\\
& = \lim_{\delta, n} \frac{1}{\delta^{N-1}}\int _{D_n(x_0, \delta)\cap S(w_{n, \delta})\cap S(\chi_{n, \delta})} g_2 (\chi_{n, \delta}^+,\chi_{n, \delta}^-,w_{n, \delta}^+, w_{n, \delta}^-,  \nu_{n,\delta})\, \de \cH^{N-1}\\
& + \lim_{\delta, n} \frac{1}{\delta^{N-1}}\int _{Q(x_0, \delta)\cap S(w_{n, \delta})\cap S(u_{n,\delta})\cap S(\chi_{n, \delta})} (g_2 ( \chi_{n, \delta}^+,\chi_{n, \delta}^-,u_{n, \delta}^+, u_{n, \delta}^-,\nu_{n,\delta})-g_2 (\chi_{n, \delta}^+,\chi_{n, \delta}^-,w_{n, \delta}^+, w_{n, \delta}^-, \nu_{n,\delta}))\, \de \cH^{N-1}\\
& -\lim_{\delta, n} \frac{1}{\delta^{N-1}}\int _{D_n(x_0, \delta)\cap (S(w_{ \delta})\setminus S(u_{n,\delta}))\cap S(\chi_{n, \delta})} g_2 (\chi_{n, \delta}^+,\chi_{n, \delta}^-,w_{n, \delta}^+, w_{n,\delta}^-,  \nu_{n,\delta})\, \de \cH^{N-1}\\
&\leq \lim_{\delta, n} \frac{1}{\delta^{N-1}}\int _{D_n(x_0, \delta)\cap S(w_{n, \delta})\cap S(\chi_{n, \delta})} g_2 (\chi_{n, \delta}^+,\chi_{n, \delta}^-,w_{n, \delta}^+, w_{n, \delta}^-,  \nu_{n,\delta})\, \de \cH^{N-1}\\
& + \lim_{\delta, n} \frac{1}{\delta^{N-1}}\int _{Q(x_0, \delta)\cap (S(h_{n, \delta})\cup S(h_{\delta}))\cap S(\chi_{n, \delta})} C\left |[h_{n,\delta}]|+ |[h_\delta]\right | \de \cH^{N-1},
\end{split}
\end{equation}
where we used $(H_7)$.
We observe that
\begin{equation}\nonumber
\begin{split}
& \lim_{\delta, n} \frac{1}{\delta^{N-1}}\int _{Q(x_0, \delta)\cap S(h_{n, \delta})\cap S(\chi_{n, \delta})} |[h_{n, \delta}]| \, \de \cH^{N-1}\\
&  \leq C\lim_{\delta, n} \frac{1}{\delta^{N-1}}\int _{Q(x_0, \delta)} |G(x)|\,\de x  \leq C\lim_{\delta, n} \frac{1}{\delta^{N}}\int _{Q(x_0, \delta)} |G(x)|\,\de x = 0\\
\end{split}
\end{equation}
by \eqref{507} and the choice of $x_0$.
The control of 
$$\lim_{\delta, n} \frac{1}{\delta^{N-1}}\int _{Q(x_0, \delta)\cap S(h_{ \delta})\cap S(\chi_{n, \delta})} |[h_{ \delta}]| \de \cH^{N-1}$$
follows from the same type of estimates together with \eqref{510}.
%\note{added this part, it is here that the previous version of $(H_5)$ was causing problems, changed it.}
The result now is an immediate consequence of periodicity and a change of variables. In fact, we have that
\begin{equation}\nonumber
K_1\leq \lim_{\delta, n} \frac{1}{\delta^{N-1}}\int_{Q\cap S(w)\cap S(\tilde{\chi})} g_2 ( \tilde{\chi}^+, \tilde{\chi}^-, w^+, w^-, \nu(\tilde{\chi},w) )\, \de \cH^{N-1}.
\end{equation}
This concludes the proof of \eqref{879}.
\\
The proof of \eqref{445}, since $x_0 \notin S(\chi)$, is simpler and follows  the previous arguments for the proof of equation \eqref{879}, taking $\chi_{n, \delta}$ to be the constant sequence $\chi_{n, \delta} = \chi_{a, b, e_N}$ with $a=b= \chi(x_0).$ We skip the proof.

Finally in order to prove the upper bound when $x_0\in S(\chi)\setminus S(u)$, i.e. \eqref{879chi} it suffices to consider $\chi_{n,\delta}\equiv \chi_{a,b,\nu}$ and $u_{n,\delta}$ constant. 

\paragraph{\textbf{$E$ set of finite perimeter}}.  For every fixed quadruple $(a,b,c,d) \in \{0, 1\}\times \{0, 1\}\times \R {d}\times \R {d}$, in view of the upper semicontinuity of $\gamma$ with respect to the normal variable (see Lemma \ref{Lipgammaodsd}), there exists a sequence $\gamma_m:\R N\to [0,+\infty)$ such that 
 \begin{equation}\nonumber
 \gamma(a,b,c,d,p)\leq \gamma_m(a,b,c,d,p)\leq C|p|, \;\; \hbox{ for every } p \in \R N,\end{equation}
and
\begin{equation}\nonumber
\gamma(a,b,c,d,p)=\inf_m \gamma_m(p),
\end{equation}
where, with an abuse of notations, $\gamma$ has been extended to $\R N$ as a positive $1$-homogeneous function.
 
Consider now a sequence of polyhedra approximating $E$ in the sense of Lemma \ref{polyhedra} and define the sequence  $(\chi_n,u_n) =(a\chi_{E_n}+b\chi_{\Omega\setminus E_n}, c\chi_{E_n}+d\chi_{\Omega \setminus E_n})$.  
From  \eqref{879} we have that, for every $U\in {\mathcal O}(\Omega)$ and  for any $n \in \mathbb N,$
 $$
 {\mathcal F}_{\odsd}(\chi, u,G;U)\leq C(N) \int_U|G(x)|^pdx +\int_{\partial E_n\cap U}\gamma(a,b,c,d,\nu(\chi_n, u_n)(x))d {\mathcal H}^{N-1}.
 $$
 Thus, taking into account the upper bound of $\gamma$ in terms of $\gamma_m$, we obtain
\begin{equation*}
\begin{split}
{\mathcal F}_{\odsd}(\chi, u,G;U) &\leq \liminf_{n\to \infty} {\mathcal F}_{\odsd}(\chi_n, u_n,G;U)\\
&\leq \liminf_{n\to \infty}\left\{C(N) \int_U|G(x)|^pdx +\int_{\partial E_n\cap U}\gamma(a,b,c,d,\nu(\chi_n, u_n)(x))d {\mathcal H}^{N-1}\right\}\\
&\leq \liminf_{n, m\to \infty}\left\{C(N) \int_U|G(x)|^pdx +\int_{\partial E_n\cap U}\gamma_m(a,b,c,d,\nu(\chi_n, u_n)(x))d {\mathcal H}^{N-1}\right\}\\
&\leq \liminf_{n\to \infty} \left\{C(N)\int_U|G(x)|^pdx +\int_{\partial E\cap U}\gamma(a,b,c,d,\nu(\chi_n, u_n)(x))d {\mathcal H}^{N-1}\right\},
\end{split}
\end{equation*}
by first sending $m \to \infty$ and by the Monotone Convergence Theorem.
Finally, by the Dominated Convergence Theorem we get
 $$
 {\mathcal F}_{\odsd}(\chi, u,G; U) \leq  C(N)\int_U|G(x)|^p\de x +\int_{\partial E\cap U}\gamma(a,b,c,d,\nu(x))d {\mathcal H}^{N-1}.
 $$
Taking the Radon-Nykodym derivative at $x_0$ point of absolute continuity for $\gamma(\chi^+(\cdot),\chi^-(\cdot),u^+(\cdot),u^-(\cdot),\nu(\cdot))$ with respect to the ${\mathcal H}^{N-1}$ measure gives the desired result.

\subsection{Completion of the proof of Theorem \ref{312}}
Putting together the results obtained in subsection \ref{lb} and \ref{ub} we have  proven that 
\begin{equation}
\label{replinfty}
\mathcal{F}_{\odsd}( \chi, u, G) = \int_\Omega H ( \chi, \nabla u, G)\, \de x + \int_{\Omega \cap S(\chi, u)} \gamma( \chi^+, \chi^-, u^+, u^-, \nu(\chi, u))\, \de \cH^{N-1}.
\end{equation}
for every $\chi \in BV(\Omega;\{0,1\})$ and $u \in SBV(\Omega;\R d)\cap L^\infty(\Omega;\R d)$.

In order to achieve the representation for every $u \in SBV(\Omega,\R d)$ we start observing that the proof of the lower bound did not exploit the fact that $u\in L^\infty$. Thus it remains to deduce the upper bound in the general case. To this end, define
\begin{equation*}
\begin{split}
\mathcal{J}_{\odsd}( \chi, u, G) &= \int_\Omega H ( \chi, \nabla u, G)\, \de x + \int_{\Omega \cap (S(u)\setminus S(\chi))} \gamma_{\tiny{\text{sd}}}( \chi, [u], \nu)\, \de \cH^{N-1} \\
&+ \int_{\Omega \cap S(u) \cap S(\chi))}\gamma_{\tiny{\text{od-sd}}}(\chi^+, \chi^-,u^+, u^-,  \nu(\chi, u)) \, \de \cH^{N-1}\\
&+ \int_{\Omega \cap (S(\chi) \setminus S(u))}\gamma_{\tiny{\text{od}}}([\chi], \nu)\, \de \cH^{N-1}.
\end{split}
\end{equation*}
Define $\phi_i$ as in \eqref{phii} of Lemma \ref{385}, such that $\phi_i \in C^\infty(\R d,\R d)$ and  $\|\nabla\phi_i\|_{L^\infty}\leq 1$.
Since $\phi_i(u)\to u$ in $L^1$ as $i\to \infty$, the lower semicontinuity of ${\mathcal F}_{\odsd}(\chi,\cdot, G)$ entails that
$$
{\mathcal F}_{\odsd}(\chi,u, G)\leq \liminf_{i\to \infty}{\mathcal F}_{\odsd}(\chi,\phi_i(u), G)=\liminf_{i\to \infty}\mathcal{J}_{\text{od-sd}}(\chi,\phi_i(u), G),
$$
where in the latter equality it has been exploited \eqref{replinfty} and the definition of $\mathcal{J}_{\odsd}$.

We recall that \eqref{422CF}, \eqref{gammasdgrowth}, and \eqref{gammagrowth} hold.
Then, defining for every $i \in \mathbb N$ 
\begin{equation*}
\Omega_i:=\{x\in \Omega: |u^+(x)|\geq e^i \hbox{ or }|u^-(x)|\geq e^i\}\cap\{x \in \Omega: |u^+(x)|<e^{i+1}\hbox{ or } |u^-(x)|<e^{i+1}\},
\end{equation*}
we have
\begin{equation*}
%\begin{split}
{\mathcal J}_{\odsd}(\chi,\phi_i(u), G) \leq %& 
\mathcal{J}_{\odsd}(\chi, u ,G ) + C \int_{\{x: |u(x)|\geq e^i\}} (1+|\nabla \phi_i(u)|+ |G|^p)\de x %\\
%& 
+C\int_{\Omega_i\cap S(u)} (1+ |[u]|)(x)|\de {\mathcal H}^{N-1}.
%\end{split}
\end{equation*}
Exactly the same arguments in \cite[formula (3.19)-(3.23)]{CF} guarantee that the latter integrals are $O\left(\frac{1}{i}\right)$, hence, letting $i \to \infty$, one can conclude that
$$
{\mathcal F}_{\odsd}(\chi, u,G)\leq \mathcal{J}_{\odsd}(\chi,u,G).
$$

\bigskip

\noindent\textbf{Acknowledgements}.
The authors are grateful to SISSA for its hospitality and support, where part of this research was conducted.
The authors also thank David R.\@ Owen for his fruitful comments on the model.
The research of JM was partially supported by the Funda\c{c}\~{a}o para a Ci\^{e}ncia e a Tecnologia through grant UID/MAT/04459/2013.
The research of MM was partially supported by the European Research Council through the ERC Advanced Grant ``QuaDynEvoPro'', grant agreement no.\@ 290888.
MM is a member of the Progetto di Ricerca GNAMPA-INdAM 2015 ``Fenomeni critici nella meccanica dei materiali: un approccio variazionale''.
MM and EZ are members of the Gruppo Nazionale per l'Analisi Matematica, la Probabilit\'a e le loro Applicazioni (GNAMPA) of the Istituto Nazionale di Alta Matematica (INdAM).

\end{document}